\newcommand{\CH}{{\mathsf{CH}}}	
\DeclareMathOperator{\Int}{Int} 		
\DeclareMathOperator{\Cl}{Cl} 		
\newcommand{\FORALL}[1]{\forall {#1} \, }
\newcommand{\EXISTS}[1]{\exists {#1} \, }
\newcommand{\N}{\mathbb{N}}	
\newcommand{\R}{\mathbb{R}}	
\newcommand{\ZFC}{\mathsf{ZFC}}		
\newcommand{\Pow}{\mathscr{P}}		
\newcommand{\IMPLIES}{\Rightarrow }
\newcommand{\IFF}{\Leftrightarrow}
\newcommand{\AND}{\mathbin{\, \wedge \,}}
\newcommand{\markdef}[1]{\textbf{#1}} 
\newcommand{\Mid}{\boldsymbol\mid}			
\newcommand{\set}[1]{\mathopen \{ {#1} \mathclose \}} 
\newcommand{\setof}[2]{\mathopen \{{#1}\Mid{#2} \mathclose\}} 
\newcommand{\setLR}[1]{\left \{ {#1} \right \}} 
\newcommand{\card}[1]{\mathopen | #1 \mathclose |}		
\newcommand{\cardLR}[1]{ \left | #1 \right |}		
\newcommand{\On}{\mathord{\mathrm{Ord}}}		
\DeclareMathOperator{\Span}{span}
\renewcommand{\restriction}{\mathop{\upharpoonright}}
\theoremstyle{plain}
\newtheorem{theorem}{Theorem}[section]
\newtheorem{conjecture}[theorem]{Conjecture}
\newtheorem{proposition}[theorem]{Proposition}
\newtheorem{lemma}[theorem]{Lemma}
\newtheorem{corollary}[theorem]{Corollary}
\newtheorem{claim}{Claim}[theorem]
\theoremstyle{definition}
\newtheorem{definition}[theorem]{Definition}
\newtheorem{notation}[theorem]{Notation}
\newtheorem{problem}[theorem]{Problem}
\theoremstyle{remark}
\newtheorem{remark}[theorem]{Remark}
\newtheorem{remarks}[theorem]{Remarks}
\newtheorem{example}[theorem]{Example}
\title{How many sprays cover the space?}
\author{Alessandro Andretta}
\address{Università degli Studi di Torino, Dipartimento di Matematica ``G. Peano", Via Carlo Alberto 10, 10123 Torino, Italy}
\email{alessandro.andretta@unito.it}
\author{Ivan Izmestiev}
\address{TU Wien,
Institute of Discrete Mathematics and Geometry,
Wiedner Hauptstra{\ss}e 8-10/104,
1040 Vienna, Austria}
\email{izmestiev@dmg.tuwien.ac.at}
\date{\today}
\subjclass[2020]{03E50, 51N20}
\keywords{Continuum hypothesis, sprays}
\begin{document}
\maketitle

\begin{abstract}
For all \( d \geq 3 \) we show that the cardinality of \( \R \) is at most \( \aleph_n \) if and only if \( \R^d \) can be covered with \( ( n + 1 ) ( d - 1 ) + 1 \) sprays whose centers are in general position in a hyperplane.
This extends previous results by Schmerl when \( d = 2 \).
\end{abstract}

\section{Introduction}
A spray in the plane is a subset of \( \R^2 \) together with a distinguished point (called center) such that all circles centered in that point have finite intersection with the spray.
These objects were introduced in~\cite{Schmerl:2003uq}, where it is observed that the continuum hypothesis implies that the plane can be covered with three sprays.
(The continuum hypothesis, \( \CH \) from now on, asserts that every infinite subset of \( \R \) is either countable or else in bijection with \( \R \); by work of Gödel and Cohen \( \CH \) can neither be disproved nor proved from \( \ZFC \), the Zermelo-Frænkel set theory with the axiom of choice.)
In~\cite{Vega:2009xy} it is shown that \( \CH \) is equivalent to the plane being covered with three sprays with collinear centers---in fact the statement ``the cardinality of \( \R \) is at most \( \aleph_n \)'' (in symbols: \( 2^{\aleph_0} \leq \aleph_n \)) is equivalent to \( \R^2 \) being covered with \( n + 2 \) sprays with collinear centers~\cite{Schmerl:2010nr}.
Collinearity is essential, since \( \ZFC \) proves that the plane can be covered with three sprays centered in arbitrary non-collinear points~\cite{Vega:2009xy,Schmerl:2010nr}.

The notion of spray can be extended to \( \R^d \) for any \( d \geq 3 \), with \( ( d - 1 ) \)-dimensional spheres in place of \( 1 \)-dimensional spheres, i.e. circles, and the natural question is the relation (if any) between the size of the continuum and the number of sprays needed to cover \( \R^d \).
By~\cite[Theorem 2]{Erdos:1994yq}, \( 2^{\aleph_0} \leq \aleph_n \) implies that \( \R^3 \) can be covered with \( 2 n + 3 \) sprays such that their centers are coplanar, and no three of them are collinear.
In particular, \( \CH \) implies that given five coplanar points such that no three of them are collinear, then there are sprays centered around them that cover \( \R^3 \).
By~\cite{Schmerl:2012aa} \( \R^3 \) cannot be covered with three sprays, but the problem whether the space can be covered with four sprays remained open.

We prove that \( 2^{\aleph_0} \leq \aleph_n \) is equivalent to \( \R^3 \) being covered with \( 2 n + 3 \) sprays whose centers are coplanar, and no three of them are collinear.
Moreover \( 2 n + 3 \) is optimal---in particular \( \R^3 \) cannot be covered with four sprays with coplanar centers.
In fact we prove a similar result for \( \R^d \) with \( d > 3 \), namely: \( 2^{\aleph_0} \leq \aleph_n \) is equivalent to \( \R^d \) being covered with \( ( n + 1 ) ( d - 1 ) + 1 \)-many sprays whose centers lie on a hyperplane \( H \), and the affine span of any \( d \) of them is \( H \) (Theorem~\ref{th:spraysinRd}).
Again the number \( ( n + 1 ) ( d - 1 ) + 1 \) of sprays is optimal.
Finally we show that, irrespective of the size of the continuum, \( \R^d \) can be covered with countably many sprays whose centers lie on a hyperplane \( H \), and the affine span of any \( d \) of them is \( H \) (Theorem~\ref{th:unionofcountablymanysprays}).

We do not know if \( \R^d \) can be covered with \( d + 1 \)-many sprays such that the affine span of their centers is \( \R^d \), even in the case \( d = 3 \).
In other words: can \( \R^3 \) be covered with four sprays whose centers are not coplanar, i.e. form a tetrahedron?
We suspect that the answer is affirmative and that it can be proved in \( \ZFC \).
If true, this would be analogous to the fact that \( \R^2 \) can be covered with three sprays with non-collinear centers.

The results in this paper lie on the interface between set theory and the geometry of euclidean spaces.
The archetypal results in this area are the classical theorems of Sierpiński---see Theorem~\ref{th:Sierpinski} below and the beginning of Section~\ref{sec:Hyperplane-sections} for some context and references.
Most of these results deal with linear objects like lines, or hyperplanes in \( \R^d \), while sprays are essentially non-linear objects.
We extend a construction in~\cite{Schmerl:2010nr} from \( \R^2 \) to \( \R^d \), transforming the quadratic problem of covering the space with sprays to the linear problem of covering the space with sets having finite intersections with certain families of hyperplanes.
This latter problem has been studied before~\cite{Bagemihl:1959te,Erdos:1994yq,Simms:1997aa}.
Extending these earlier results we are able to prove our results on sprays.
 
The paper is organized as follows.
After recalling the notations and the basic notions that will be used throughout the paper, we show in Section~\ref{sec:Transforming-sprays-into-linear-objects} how to transform a covering of \( \R^d \) with sprays with centers on a given hyperplane into a covering of (an open subset of) \( \R^d \) with small intersection with certain families of hyperplanes.
Section~\ref{sec:Hyperplane-sections} is devoted to studying the following problem: given distinct, non-zero vectors \( \mathbf{u}_1 , \dots , \mathbf{u}_k \) in \( \R^d \), are there \( A_1 , \dots , A_k \) covering \( \R^d \) such that every plane orthogonal to \( \mathbf{u}_i \) has finite intersection with \( A_i \)? 
It turns out that this problem is closely related to the size of the continuum, and by the results in Section~\ref{sec:Transforming-sprays-into-linear-objects} it is equivalent to the existence of sprays \( X_1 , \dots , X_k \) with centers on a given hyperplane \( H \), and covering \( \R^d \), as the (directions of the) \( \mathbf{u}_i \)s correspond to the position of the centers of the \( X_i \)s on \( H \).
Finally in Section~\ref{sec:The-main-results} we prove the results about the existence of sprays covering \( \R^d \) and the size of the continuum.

In order to make the present paper accessible to a wider audience, many basic definitions in set theory and geometry are summarized in Section~\ref{subsec:Notation}.
The reader who is proficient with these matters can safely skip it.

\section{Notation and preliminary results}\label{sec:Notation-and-preliminary-results}
\subsection{Notation}\label{subsec:Notation}

\subsubsection{Set theory}
We work in \( \ZFC \)---this is the standard framework that most mathematicians (implicitly, or explicitly) adopt to prove theorems.
Our notation is standard: \( \Pow ( X ) \) is the power-set of \( X \), the pointwise image of \( Z \subseteq X \) via some \( f \colon X \to Y \) is \( f [ Z ] \coloneqq \setof{ f ( z ) }{ z \in Z } \).

\markdef{Ordinals} are special sets that are well-ordered by \( \in \), the membership relation, and every well-ordered set is isomorphic to a unique ordinal.
Lower-case Greek letters range over the ordinals, \( \On \) is the collection of all ordinals, and for \( \alpha , \beta \in \On \) we have \( \alpha < \beta \) if and only if \( \alpha \in \beta \).
Any natural number can be construed as an ordinal, letting \( 0 \) be \( \emptyset \) the empty set, \( 1 \) be the set \( \setLR{0} \), and more generally \( n + 1 \coloneqq n \cup \setLR{ n } \).
The least infinite ordinal \( \omega \) is the set \( \N = \setLR{ 0 , 1 , \dots , n , \dots } \).
A \markdef{cardinal} is an ordinal that is not in bijection with a smaller ordinal---every natural number is a cardinal, and so is \( \omega \).
Infinite cardinals are enumerated by the function \( \aleph \) using ordinals as indexes; so \( \aleph_0 \) is \( \omega \), the smallest infinite cardinal, \( \aleph_1 \) is the smallest cardinal bigger than \( \aleph_0 \), and more generally \( \aleph_{n + 1} \) is the smallest cardinal bigger than \( \aleph_n \).
As \( \omega \) is the smallest ordinal bigger than any finite ordinal (i.e. natural number), \( \aleph_ \omega \) is the smallest cardinal bigger than any \( \aleph_n \).
As there is no reason to stop there we can take larger ordinals such as \( \omega + 1 < \omega + 2 < \dots \) and construct even larger cardinals \( \aleph_{ \omega + 1} < \aleph_{ \omega + 2 } < \dots \).
And so on.

Using the axiom of choice, every set \( X \) can be well-ordered, so it is in bijection with some ordinal, and hence with a unique cardinal: this cardinal is denoted by \( \card{X} \) and it is called the \markdef{cardinality} of \( X \).
A set \( X \) is finite if it is in bijection with a natural number i.e. \( \card{X} < \aleph_0 \); otherwise it is infinite i.e. \( \aleph_0 \leq \card{X} \).
We say \( X \) is \markdef{countable} if either \( \card{X} < \aleph_0 \), or else \( \card{X} = \aleph_0 \)---this can be written in a compact way as \( \card{X} \leq \aleph_0 \) or as \( \card{X} < \aleph_1 \).
A set \( X \) is \markdef{uncountable} if it is not countable, that is \( \aleph_1 \leq \card{X} \).
The countable union of countable sets is countable. 
More generally, for any ordinal \( \delta \), if \( ( X_i )_{ i \in I } \) is a family of sets, each of cardinality \( \leq \aleph_ \delta \) and \( \card{I} \leq \aleph_ \delta \), then \( \card{ \bigcup_{i \in I} X_i } \leq \aleph_ \delta \).

The set \( \R \) is in bijection with \( \setof{ f}{ f \colon \omega \to \setLR{ 0 , 1 }} \), and for this reason the cardinality of \( \R \) is often denoted with \( 2^{\aleph_0} \).
By Cantor's theorem \( \aleph_1 \leq \card{\R} \), and Cantor's continuum hypothesis \( \CH \) asserts that the inequality can be replaced with an equality.
In other words \( \CH \) asserts that every subset of the real line is either countable or else it is in bijection with \( \R \).
Since \( \aleph_1 \leq 2^{\aleph_0} \) one can state \( \CH \) as \( 2^{\aleph_0} \leq \aleph_1 \).
By Cohen's results, it is consistent that the cardinality of \( \R \) be any \( \aleph_{ n + 1 } \), or even larger cardinals, like \( \aleph_{ \omega + n + 1 } \).
(But \( 2^{\aleph_0} \) cannot be \( \aleph_ \omega \) by König's theorem.)

\subsubsection{Geometry}
The standard basis of \( \R^d \) is denoted by \( \mathbf{e}_1 , \dots , \mathbf{e}_d \).
The following notation for hyperplanes will be used throughout the paper.

\begin{notation}\label{ntn:hyperplane}
Given a vector \( \mathbf{u} \in \R^d \setminus \setLR{ \mathbf{0} } \) and \( \mathbf{p} \in \R^d \) let 
\[
H_{\mathbf{u}} ( \mathbf{p} ) = \mathbf{p} + \setof{ \mathbf{v} }{ \mathbf{v} \boldsymbol{\cdot} \mathbf{u} = 0 }
\]
be the hyperplane of \( \R^d \) orthogonal to \( \mathbf{u} \) passing through \( \mathbf{p} \), where \( \boldsymbol{\cdot} \) is the (standard) inner product.
We also let \( H_i ( x ) = \setof{ ( p_1 , \dots , p_d ) \in \R^d }{ p_i = x } \).
\end{notation}

An \markdef{affine subspace} \( E \) of \( \R^d \) is a translate of some vector subspace of  \( \R^d \), that is a set of the form \( \mathbf{p} + V \) where \( V \) is a vector subspace of \( \R^d \); the vector subspace \( V \) is unique, and it is the vector space associated to \( E \), while \( \mathbf{p} + V = \mathbf{p}' + V \) if and only if \( \mathbf{p} - \mathbf{p}' \in V \).
In this paper, a (finite dimensional) \markdef{affine space} is an affine subspace of some \( \R^d \).
Elements of an affine space are called points, and since every vector subspace \( V \) of \( \R^d \) is also an affine space, we can refer to elements of it as points or vectors, depending if we privilege the affine or vector space structure. 
If \( E \) is an affine space of \( \R^d \) and \( \mathbf{p} , \mathbf{q} \in E \) then \( \mathbf{q} - \mathbf{p} \) belongs to \( V \), the underlying vector space of \( E \), and if \( \mathbf{v} \in V \) then \( \mathbf{p} + \mathbf{v} \) belongs to \( E \).
The dimension of an affine space is, by definition, the dimension of the associated vector space.
Given an affine space \( E \) with associated vector space \( V \), the \markdef{affine envelope} or \markdef{affine span} of a non-empty set \( S \subseteq E \) is \( \mathbf{p} + \Span \setof{ \mathbf{q} - \mathbf{p} }{ \mathbf{q} \in S } \) where \( \mathbf{p} \in S \) and \( \Span X \) is the smallest vector subspace of \( V \) containing \( X \subseteq V \).
It is easy to check that the definition does not depend on the point \( \mathbf{p} \), and that the affine span of \( S \) is the intersection of all affine subspaces of \( E \) containing \( S \).
Two affine subspaces \( E \) and \( F \) of \( \R^d \) are \markdef{complementary} if their associated vector spaces \( V \) and \( W \) are complementary, that is \( \R^d = V + W \) and \( V \cap W = \setLR{\mathbf{0}} \).
The intersection of two complementary spaces \( E , F \) is a single point.
In fact if \( \mathbf{p} , \mathbf{q} \in E \cap F \), then \( \mathbf{p} - \mathbf{q} \in V \cap W = \setLR{\mathbf{0}} \), so \( \mathbf{p} = \mathbf{q} \), that is \( E \cap F \) has at most one point.
If \( E = \mathbf{p} + V \) then \( \mathbf{p} \) can be written in a unique way as \( \mathbf{v} + \bar{\mathbf{w}} \) with \( \mathbf{v} \in V \) and \( \bar{\mathbf{w}} \in W \), and \( E = \bar{\mathbf{w}} + V \); similarly \( F = \bar{\mathbf{v}} + W \) for a unique \( \bar{\mathbf{v}} \in V \).
Therefore \( \bar{\mathbf{v}} + \bar{\mathbf{w}} \) is the unique element of \( E \cap F \). 

The inner product \( \boldsymbol{\cdot} \) on \( \R^d \) yields the Euclidean norm \( \| \cdot \| \), and hence we have a distance on \( \R^d \) and all its affine subspaces.
In particular, if \( E \) is an affine space then \( \| \mathbf{p} - \mathbf{q} \| \) is the distance between \( \mathbf{p} , \mathbf{q} \in E \).
Two vectors \( \mathbf{v} ,\mathbf{w} \) of \( \R^d \) are \markdef{orthogonal} if \( \mathbf{v} \boldsymbol{\cdot} \mathbf{w} = 0 \).
If \( V , W \) are complementary subspaces of \( \R^d \), then \( V \) and \( W \) are orthogonal if \( \FORALL{\mathbf{v} \in V} \FORALL{\mathbf{w} \in W} ( \mathbf{v} \boldsymbol{\cdot} \mathbf{w} = 0 ) \); in this case we call one of the two subspaces the \markdef{orthogonal complement} of the other.
Two complementary affine subspaces \( E \) and \( F \) of \( \R^d \) are orthogonal if their underlying vector spaces are orthogonal.

Let \( E \) be an affine subspace of \( \R^d \).
The \markdef{sphere in \( E\) with center \( \mathbf{c} \in E \) and radius \( r \in \R \)} is the set
\[
 \mathbb{S} ( E ; \mathbf{c} , r ) \coloneqq \setof { \mathbf{x} \in E }{ \| \mathbf{x} - \mathbf{c} \| = r } .
\]
We convene that \( \mathbb{S} ( E ; \mathbf{c} , r ) \) is empty if \( r < 0 \), and it is the singleton \( \setLR{ \mathbf{c} } \) when \( r = 0 \).
Observe that if \( r > 0 \) then \( \mathbb{S} ( E ; \mathbf{c} , r ) \) has cardinality \( 2 \) when \( \dim ( E ) = 1 \); if \( \dim ( E ) \geq 2 \), then \( \mathbb{S} ( E ; \mathbf{c} , r ) \) has cardinality \( 2^{\aleph_0} \).
Whenever the ambient space (i.e. \( \R^d \), \( E \), \ldots) is clear we will simply write \( \mathbb{S} ( \mathbf{c} , r ) \).
A \markdef{\( ( k - 1 ) \)-dimensional sphere} is a sphere in an affine subspace \( E \) of some \( \R^d \) with \( \dim E = k \).

\subsection{Families with finite mesh}
\begin{definition}\label{def:finitemesh}
Let \( N , d \geq 2 \) be natural numbers and suppose that \( \mathcal{H}_i \subseteq \Pow ( \R^d ) \) with \( 1 \leq i \leq N \) are pairwise disjoint.
The sequence \( ( \mathcal{H}_i )_{ i = 1}^N \) 
\begin{itemize}
\item
is \markdef{locally finite} if there is a positive integer \( s \) such that for any \( s \) distinct points in \( \R^d \) there are at most finitely many sets in \( \bigcup_{ i = 1 }^N \mathcal{H}_i \) to which these points belong;
\item
has \markdef{mesh} \( r \), if \( r \geq 2 \) is the least integer such that the intersection of \( r \) sets belonging to distinct \( \mathcal{H}_i \)s is a finite set.
\end{itemize}
\end{definition}
All sequences \( ( \mathcal{H}_i )_{ i = 1 }^N \) we consider are locally finite.
The reason for Definition~\ref{def:finitemesh} is the following very general result by Erd\H{o}s, Jackson, and Mauldin~\cite[Theorem 2]{Erdos:1994yq}:

\begin{theorem}\label{th:EJM2}
For all \( d \geq 2 \), \( n \geq 0 \), and \( \delta = 0 , 1 \) the following are equivalent:
\begin{enumerate}[label={\upshape (\alph*)}]
\item\label{th:EJM2-a}
\( 2^{\aleph_0} \leq \aleph_{ \delta + n } \);
\item\label{th:EJM2-b}
for any \( r \geq 2 \), letting \( N = ( n + 1 ) ( r - 1 ) + 1 \), and for any sequence of pairwise disjoint \( \mathcal{H}_i \subseteq \Pow ( \R^d ) \) with \( 1 \leq i \leq N \) locally finite and of mesh \( r \), there are \( A_1 , \dots , A_N \) covering \( \R^d \) such that \( \FORALL{1 \leq i \leq N} \FORALL{ H \in \mathcal{H}_i } ( \card{ H \cap A_i } < \aleph_{ \delta} ) \).
\end{enumerate}
\end{theorem}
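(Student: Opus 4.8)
The statement is a cardinal-arithmetic calibration of the Sierpiński--Kuratowski decomposition theorems: the case \( \delta = 1 \), \( n = 0 \), \( r = 2 \) is Sierpiński's theorem (\( \CH \) iff \( \R^2 = A_1 \cup A_2 \) with each \( A_i \) meeting its pencil of parallel lines in a countable set), while the case \( \delta = 0 \), \( r = 2 \) is Kuratowski's theorem (\( 2^{\aleph_0} \le \aleph_n \) iff \( \R^{n+2} \) splits into \( n + 2 \) pieces, the \( i \)-th meeting every line of the \( i \)-th coordinate direction in a finite set). The parameter \( r \) upgrades ``two lines meet in a point'' to ``\( r \) members of distinct families meet in a finite set'', \( \delta \) toggles finite versus countable sections, and \( n \) tracks the size of the continuum. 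The plan is to prove both implications by induction on \( n \), the inductive step always trading the pair \( (n, N) \) for \( (n - 1,\, N - (r - 1)) \); the value \( N = (n + 1)(r - 1) + 1 \) is rigged precisely so that \( N - (r - 1) = ((n - 1) + 1)(r - 1) + 1 \), i.e. removing \( r - 1 \) families is exactly what lowers the continuum index by one.

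For the sufficiency direction \ref{th:EJM2-a}\( \Rightarrow \)\ref{th:EJM2-b} I would first strengthen the statement so the induction can feed on itself: prove that \emph{every} set \( P \) with \( \card{P} \le \aleph_{\delta + n} \), carrying any locally finite mesh-\( r \) family \( ( \mathcal{H}_i )_{i=1}^N \), admits the desired colouring. Fix a well-ordering \( \prec \) of \( P \) of order type \( \le \omega_{\delta + n} \), so that every proper initial segment \( P_{\prec p} = \setof{ q \in P }{ q \prec p } \) has cardinality \( \le \aleph_{\delta + n - 1} \). The base case \( n = 0 \) (so \( N = r \) and \( \card{P} \le \aleph_\delta \)) is handled directly: colour each point by the family in which it occurs \( \prec \)-earliest, so that the \( i \)-coloured points of any \( H \in \mathcal{H}_i \) form a \( \prec \)-initial piece of \( H \), which has size \( < \aleph_\delta \) because initial segments of \( \omega_\delta \) do. For the step I would imitate the correct form of Kuratowski's argument, whose \( r = 2 \) prototype is: colour a tuple by a coordinate \emph{other} than the one carrying the \( \prec \)-largest value, the choice being made by applying the inductive colouring to the \( (n + 1) \)-fold configuration living below that largest value---an initial segment of size \( \le \aleph_{\delta + n - 1} \). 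Excluding the \( \prec \)-maximal coordinate is what forces smallness: along any line in the excluded direction the maximum stays fixed, so the colour class is confined to a fixed initial segment. For general \( r \) the single ``maximal coordinate'' is replaced by a block of \( r - 1 \) families, which one peels off to confine the remaining data to an initial segment \( P_{\prec t} \) and then recurses there with the surviving \( N - (r - 1) \) families and parameter \( n - 1 \).

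For the necessity direction \ref{th:EJM2-b}\( \Rightarrow \)\ref{th:EJM2-a} I would argue by contraposition, assuming \( 2^{\aleph_0} \ge \aleph_{\delta + n + 1} \) and producing a \emph{single} locally finite mesh-\( r \) family that cannot be covered. The natural witness is the coordinate model: fix vectors \( \mathbf{u}_1 , \dots , \mathbf{u}_N \in \R^d \) (with \( d \ge r \)) in general position, so that any \( r \) of the associated pencils of parallel hyperplanes meet in a finite set, and let \( \mathcal{H}_i \) be the pencil orthogonal to \( \mathbf{u}_i \); local finiteness holds with \( s = 2 \). Were there a covering \( A_1 , \dots , A_N \) with \( \card{ H \cap A_i } < \aleph_\delta \) for every \( H \in \mathcal{H}_i \), I would run the Sierpiński counting that in the plane reads: if each vertical meets \( A_1 \) and each horizontal meets \( A_2 \) in a small set, then fixing \( \aleph_1 \) reals on one axis forces the other axis to be a union of \( \aleph_1 \) small sets. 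Iterating this peeling \( n + 1 \) times, each round consuming \( r - 1 \) of the families and one aleph, expresses a set of size \( \aleph_{\delta + n + 1} \) as a union of at most \( \aleph_{\delta + n} \) sets each of size at most \( \aleph_{\delta + n} \), whose union therefore has size at most \( \aleph_{\delta + n} \)---contradicting \( \aleph_{\delta + n + 1} \le 2^{\aleph_0} \). The bookkeeping closes exactly because there are \( N = (n + 1)(r - 1) + 1 \) families.

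The main obstacle is the inductive step of the sufficiency direction for \emph{abstract} families, where the clean product structure of the coordinate model is unavailable. Concretely, one must specify, uniformly in the point \( p \), which block of \( r - 1 \) families to peel and which \( \prec \)-maximal element \( t \) to recurse below, and then verify both that each colour class meets its sets in \( < \aleph_\delta \) points and that the residual configuration genuinely lands inside \( P_{\prec t} \), so that the inductive hypothesis at parameter \( n - 1 \) applies. It is here that local finiteness does the real work: the parameter \( s \) bounds how many sets a fixed configuration can lie in, keeping the per-point choice finite and well defined, and it is also what must be invoked to dispose of the degenerate cases---points lying in several members of the same \( \mathcal{H}_i \), and the ``diagonal'' configurations where the \( \prec \)-maxima fail to be unique---which, as in the classical proofs, are confined to lower-complexity strata and mopped up by a secondary induction.
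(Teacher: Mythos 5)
The paper does not prove this theorem: it is quoted from Erd\H{o}s, Jackson, and Mauldin \cite[Theorem 2]{Erdos:1994yq} and used as a black box (only the direction \ref{th:EJM2-a}\( \IMPLIES \)\ref{th:EJM2-b} is invoked later), so there is no in-paper argument to measure yours against. Your proposal does capture the correct skeleton of the known proof---induction on \( n \), each step trading a block of \( r - 1 \) families for one aleph, with \( N - ( r - 1 ) = n ( r - 1 ) + 1 \) making the bookkeeping close---and your witness for the necessity direction (pencils of parallel hyperplanes) is the same configuration the paper analyses independently in Section~\ref{sec:Hyperplane-sections}. But what you have written is an outline with genuine gaps, not a proof.

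Concretely: (i) the base case of \ref{th:EJM2-a}\( \IMPLIES \)\ref{th:EJM2-b} is broken as stated. ``Colour each point by the family in which it occurs \( \prec \)-earliest'' is undefined, since \( \prec \) well-orders the points of \( P \), not the families \( \mathcal{H}_i \) or their members, and the claim that the \( i \)-coloured part of each \( H \in \mathcal{H}_i \) is a \emph{proper} initial segment of \( H \) is unjustified. The Sierpi\'nski prototype compares the ranks of the two sets through a point under a well-ordering of the index sets of the pencils; for an abstract locally finite family a point can lie in infinitely many members of a single \( \mathcal{H}_i \), and \( \bigcup_i \mathcal{H}_i \) need not even have cardinality \( \leq 2^{\aleph_0} \), so there is no canonical coordinate to compare. (ii) The inductive step for abstract families---which you yourself flag as ``the main obstacle''---is only described, not carried out: no rule is given for choosing which \( r - 1 \) families to peel at a given point, and the verification that the residual configuration lands inside an initial segment, together with the degenerate cases, is deferred to an unspecified ``secondary induction''. (iii) The ``iterated Sierpi\'nski counting'' in the necessity direction is, in the hyperplane case, precisely the content of Theorems~\ref{th:EJM-Rd} and~\ref{th:hyperplanes=>bound}, whose proofs occupy several pages and require Lemma~\ref{lem:setofdifferenceinabeliangroup} together with a delicate construction of a set \( Z \) no translate of which is covered; none of that is reproduced. (A smaller point: pencils of hyperplanes in general position in \( \R^d \) have mesh \( d \), so your witness refutes \ref{th:EJM2-b} only for \( r = d \)---which does suffice, since \ref{th:EJM2-b} quantifies over all \( r \)---but the stipulation ``\( d \geq r \)'' cannot be imposed on the \( d \) fixed in the statement.) In short: the plan matches the standard proof, but none of the three load-bearing steps is actually established.
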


The theorem remains true if \( \delta \) is taken to be an ordinal, not just \( 0 \) or \( 1 \).
The main use of Theorem~\ref{th:EJM2} in this paper is the direction \ref{th:EJM2-a}\( \IMPLIES \)\ref{th:EJM2-b}.
Given \( ( \mathcal{H}_i )_{ i = 1 }^N \) locally finite of mesh \( r \) in \( \R^d \), then \( \card{\R} \leq \aleph_{ \delta + n } \) implies that there are \( A_1 , \dots , A_N \subseteq \R^d \) covering \( \R^d \) such that for all \( H \in \mathcal{H}_i \), the set \( H \cap A_i \) is \emph{finite} if \( \delta = 0 \) or \emph{countable} if \( \delta = 1 \).
Actually, the ``forward implication'' (i.e.~from a bound on the size of the continuum, to the existence of specific subsets of the space) in many papers~\cite{Bagemihl:1959te,Davies:1962ly,Davies:1963yu,Davies:1963zr,Bagemihl:1968up,Komjath:2001kq} follows from Theorem~\ref{th:EJM2}.

\subsection{Points and vectors in general position}

\begin{definition}\label{def:well-spread}
A set of vectors of \( \R^d \) is in \markdef{general position} if any subset of size \( \leq d \) is linearly independent---in other words: the vectors are as linearly independent as possible. \end{definition}

\begin{example}\label{xmp:well-spread}
If \( \mathbf{u}_1 , \dots , \mathbf{u}_N \) are vectors in general position in \( \R^d \), and \( \mathcal{H}_i \coloneqq \setof{ H_{ \mathbf{u}_i } ( \mathbf{p} ) }{ \mathbf{p} \in \R^d } \) is the family of all hyperplanes orthogonal to \( \mathbf{u}_i \), then \( ( \mathcal{H}_i )_{ i = 1 }^N \) is locally finite, and of mesh \( d \).
\end{example}

By Example~\ref{xmp:well-spread} and Theorem~\ref{th:EJM2}, if \( \mathbf{u}_1 , \dots , \mathbf{u}_{ ( n + 1 ) ( d - 1 ) + 1 } \) are vectors in general position in \( \R^d \), then

\begin{itemize}
\item
\( 2^{\aleph_0} \leq \aleph_{n} \) implies that there are \( A_1 , \dots , A_{ ( n + 1 ) ( d - 1 ) + 1 } \) covering \( \R^d \) such that \( H_{\mathbf{u}_i} ( \mathbf{p} ) \cap A_i \) is finite for every \( \mathbf{p} \in \R^d \) and \( 1 \leq i \leq ( n + 1 ) ( d - 1 ) + 1 \);
\item
\( 2^{\aleph_0} \leq \aleph_{ n + 1} \) implies that there are \( A_1 , \dots , A_{ ( n + 1 ) ( d - 1 ) + 1 } \) covering \( \R^d \) such that \( H_{\mathbf{u}_i} ( \mathbf{p} ) \cap A_i \) is countable, for every \( \mathbf{p} \in \R^d \) and \( 1 \leq i \leq ( n + 1 ) ( d - 1 ) + 1 \).
\end{itemize}

Therefore \( \CH \) implies that if \( \mathbf{u}_1 , \dots , \mathbf{u}_d \) are a basis of \( \R^d \), then there are \( A_1 , \dots , A_ d \) covering \( \R^d \) such that every hyperplane orthogonal to \( \mathbf{u}_i \) has countable intersection with \( A_i \).

In fact, the implications above are actually equivalences~\cite[pp.~71, 327--328]{Komjath:2006fj}.

\begin{theorem}[Sierpiński]\label{th:Sierpinski}
\( \CH \) is equivalent to either one of the following:
\begin{enumerate}[label={\upshape (\alph*)}]
\item\label{th:Sierpinski-a}
there are \( A_1 , A_2 \) covering the plane such that every vertical line has countable intersection with \( A_1 \) and every horizontal line has countable intersection with \( A_2 \),
\item\label{th:Sierpinski-b}
there are \( A_1 , A_2 , A_3 \) covering \( \R^3 \) such that every plane orthogonal to \( \mathbf{e}_i \) has countable intersection with \( A_i \).
\end{enumerate}
\end{theorem}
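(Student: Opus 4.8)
The plan is to prove the two equivalences at once by closing the cycle $\CH \IMPLIES$\ref{th:Sierpinski-b}$\IMPLIES$\ref{th:Sierpinski-a}$\IMPLIES \CH$. The first implication is already in hand: taking $d = 3$, $n = 0$, $\delta = 1$ in the discussion following Theorem~\ref{th:EJM2}, and observing that $\mathbf{e}_1, \mathbf{e}_2, \mathbf{e}_3$ are in general position in $\R^3$ (any two or three of the standard basis vectors are linearly independent), the hypothesis $\CH$, i.e.\ $2^{\aleph_0} \leq \aleph_1$, produces $A_1, A_2, A_3$ covering $\R^3$ with $H_{\mathbf{e}_i}(\mathbf{p}) \cap A_i$ countable for every $\mathbf{p}$. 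Since the planes orthogonal to $\mathbf{e}_i$ are exactly the $H_i(c)$, this is precisely \ref{th:Sierpinski-b}. The same recipe with $d = 2$ gives $\CH \IMPLIES$\ref{th:Sierpinski-a} directly, should one prefer to establish the two equivalences separately.

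For \ref{th:Sierpinski-b}$\IMPLIES$\ref{th:Sierpinski-a} I would slice. Fix any $c \in \R$ and let $P = H_3(c)$, identified with $\R^2$ via the coordinates $(x_1, x_2)$. Put $B_1 = (A_1 \cup A_3) \cap P$ and $B_2 = A_2 \cap P$, so that $B_1 \cup B_2 = (A_1 \cup A_2 \cup A_3) \cap P = P$. A ``vertical'' line of $P$ has the form $H_1(a) \cap P$; since $A_1 \cap H_1(a)$ is countable and $A_3 \cap P = A_3 \cap H_3(c)$ is countable, $B_1$ meets every vertical line of $P$ in a countable set. Dually, $B_2 \cap \bigl(H_2(b) \cap P\bigr) \subseteq A_2 \cap H_2(b)$ is countable, so $B_2$ meets every horizontal line of $P$ countably. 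Thus $(B_1, B_2)$ witnesses \ref{th:Sierpinski-a} for $P \cong \R^2$.

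The heart of the matter is \ref{th:Sierpinski-a}$\IMPLIES \CH$, the classical counting argument of Sierpiński, which I would run by contraposition. Assume $2^{\aleph_0} \geq \aleph_2$ and that $A_1, A_2$ cover $\R^2$ with every vertical line meeting $A_1$ countably and every horizontal line meeting $A_2$ countably. Choose $Z \subseteq \R$ with $\card{Z} = \aleph_1$. For each $z \in Z$ the horizontal line through $z$ meets $A_2$ in a countable set of abscissae, so $B \coloneqq \setof{x \in \R}{\EXISTS{z \in Z} (x, z) \in A_2}$ satisfies $\card{B} \leq \aleph_1 \cdot \aleph_0 = \aleph_1 < 2^{\aleph_0}$. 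Pick $x_0 \in \R \setminus B$. Then $(x_0, z) \notin A_2$, hence $(x_0, z) \in A_1$, for every $z \in Z$, so the vertical line through $x_0$ meets $A_1$ in the uncountable set $\setof{(x_0, z)}{z \in Z}$ --- contradicting the hypothesis. Therefore $2^{\aleph_0} \leq \aleph_1$, that is $\CH$.

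The only genuinely delicate point is the counting step of the last paragraph, where the set-theoretic content of the statement resides; the slicing step is routine once one verifies that restricting to a plane $H_i(c)$ converts the three-dimensional ``countable intersection with hyperplanes'' conditions into the two planar conditions of \ref{th:Sierpinski-a}, which is immediate from the inclusions above. Everything else is bookkeeping: the forward direction is a direct instance of Theorem~\ref{th:EJM2}, and the cycle of implications yields $\CH \IFF$\ref{th:Sierpinski-a}$\IFF$\ref{th:Sierpinski-b}.
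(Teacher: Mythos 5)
Your argument is correct. Note first that the paper itself does not prove Theorem~\ref{th:Sierpinski}: it is stated as a classical result and delegated to the literature (Komj\'ath--Totik), so there is no in-paper proof to match against. Your forward direction is exactly the observation the paper makes just before the theorem --- an instance of Theorem~\ref{th:EJM2} via Example~\ref{xmp:well-spread}, with \( d = 3 \), \( r = 3 \), \( n = 0 \), \( \delta = 1 \) giving \( N = 3 \) for part~\ref{th:Sierpinski-b}, and \( d = r = 2 \) giving \( N = 2 \) for part~\ref{th:Sierpinski-a}. The slicing step \ref{th:Sierpinski-b}\( \IMPLIES \)\ref{th:Sierpinski-a} is sound: the only point needing care is that \( A_3 \cap H_3 ( c ) \) is countable because \( H_3 ( c ) \) is itself a plane orthogonal to \( \mathbf{e}_3 \), and you use exactly that; this is the same glueing-after-restriction device the paper uses for sprays (e.g.\ in the proof of Theorem~\ref{th:sigmaspraysinRd}). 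The counting argument for \ref{th:Sierpinski-a}\( \IMPLIES \CH \) is the standard Sierpi\'nski contraposition and is carried out correctly: the union bound \( \card{B} \leq \aleph_1 \cdot \aleph_0 = \aleph_1 \) is licensed by the general union fact recorded in Section~\ref{subsec:Notation}, and \( \aleph_1 < 2^{\aleph_0} \) holds under the assumption \( 2^{\aleph_0} \geq \aleph_2 \). An alternative route to \ref{th:Sierpinski-b}\( \IMPLIES \CH \) internal to the paper would be Theorem~\ref{th:EJM-Rd-1} with \( \delta = 1 \), \( d = 3 \), padding with a fourth empty set; your elementary chain \( \CH \IMPLIES \)\ref{th:Sierpinski-b}\( \IMPLIES \)\ref{th:Sierpinski-a}\( \IMPLIES \CH \) avoids that heavier machinery and is the cleaner self-contained proof.
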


Definition~\ref{def:well-spread} can be extended to points in affine spaces.

\begin{definition}\label{def:generalposition}
Let \( S \) be a set of points of an affine space \( E \) of dimension \( d \). 
We say that \( S \) is 
\begin{itemize}
\item
in \markdef{general position in \( E \)} if the affine span of any of its subset of size \( k + 1 \) has dimension \( k \), for all \( k \leq d \),
\item
 \markdef{well-placed in} \( E \) if \( S \subseteq H \) for some hyperplane \( H \), and \( S \) is in general position in \( H \).
\end{itemize}
\end{definition}

\begin{remarks}\label{rmks:well-placed}
Let \( E \) be an affine space of dimension \( d \geq 2 \) and let \( S \subseteq E \).
\begin{enumerate}[label={\upshape (\alph*)}]
\item\label{rmks:well-placed-a}
If \( S \) is in general position (well-placed), and \( S' \subseteq S \) then \( S' \) is in general position (well-placed).
In other words, the notions of being in general position/well-placed are downward persistent with respect to inclusion.
\item
Suppose \( S \) is well-placed: 
\begin{itemize}
\item
if \( \card{S} \geq d \) then the hyperplane in the definition is unique, being the affine span of any subset of size \( d \);
\item
if \( \card{S} \leq d \) then \( S \) is in general position in \( E \).
\end{itemize}
\item
Suppose \( H \) is a hyperplane of \( E \), and \( S \subseteq H \).
Then \( S \) is well-placed in \( E \) if and only if \( S \) is in general position in \( H \).
\item
\( S \) is in general position in \( E \) if and only if the set of vectors \( \setof{ \mathbf{q} - \mathbf{p} }{ \mathbf{q} \in S \setminus \setLR{\mathbf{p} } } \) is in general position in \( \R^d \), for all \( \mathbf{p} \in S \).
\end{enumerate}
\end{remarks}

Every set of points in \( \R \) is in general position, a set of points in \( \R^2 \) is in general position if no three of them are aligned, a set of points in \( \R^3 \) is in general position if no four of them are coplanar, and so on. 
A set of points in \( \R^2 \) is well-placed if they are collinear, a set of points in \( \R^3 \) is well-placed if they are coplanar, and no three of them are collinear, a set of points in \( \R^4 \) is well-placed if they belong to the same \( 3 \)-dimensional affine subspace, and no four of them are coplanar, and so on.

Finally observe that for any affine space \( E \) of dimension \( d \geq 2 \) and any \( S \subseteq E \)
\begin{equation}\label{eq:properlyplaced}
\begin{multlined}
 \bigl ( S \text{ is general position in \( E \), or \( S \) is well-placed in } E \bigr ) \IMPLIES {}\qquad
 \\
 \shoveright{\qquad\FORALL{ k < d}\bigl ( \text{the affine span of any subset of \( S \) of size } k + 1}
 \\
 \qquad\text{has dimension } k \bigr ) .
\end{multlined}
\end{equation}

\begin{lemma}\label{lem:projectiogeneralposition}
Suppose \( \mathbf{c}_1 , \dots , \mathbf{c}_{ k + 1 } \) are distinct points in general position in \( \R^d \), with \( d \geq k \).
Let \( H \) be a hyperplane orthogonal to \( \mathbf{c}_k - \mathbf{c}_{ k + 1 } \), let \( \pi \colon \R^d \to H \) be the orthogonal projection.
Let \( \bar{\mathbf{c}}_k = \pi ( \mathbf{c}_k ) = \pi ( \mathbf{c}_{ k + 1 } ) \) and let \( \bar{\mathbf{c}}_i = \pi ( \mathbf{c}_i ) \) for \( i < k \).
Then \( \bar{\mathbf{c}}_1 , \dots , \bar{\mathbf{c}}_k \) are distinct and in general position in \( H \).
\end{lemma}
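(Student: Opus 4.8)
The plan is to reduce the assertion to a single linear-independence statement and then verify it by a short computation with the orthogonal projection. First I would unwind the hypothesis. Set \( \mathbf{u} \coloneqq \mathbf{c}_k - \mathbf{c}_{ k + 1 } \), the direction orthogonal to \( H \). Since \( \mathbf{c}_1 , \dots , \mathbf{c}_{ k + 1 } \) is a set of \( k + 1 \) points in general position in \( \R^d \) and \( k \leq d \), Definition~\ref{def:generalposition}, applied to the subset consisting of all \( k + 1 \) points, gives that their affine span has dimension \( k \); that is, the points are affinely independent. Fixing \( \mathbf{c}_{ k + 1 } \) as base point, this means the vectors \( \mathbf{v}_i \coloneqq \mathbf{c}_i - \mathbf{c}_{ k + 1 } \), for \( 1 \leq i \leq k \), are linearly independent. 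Note that \( \mathbf{v}_k = \mathbf{u} \) is precisely the direction collapsed by \( \pi \), which also explains the stated identity \( \bar{\mathbf{c}}_k = \pi ( \mathbf{c}_k ) = \pi ( \mathbf{c}_{ k + 1 } ) \).

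Next comes the first reduction. To prove that the \( k \) points \( \bar{\mathbf{c}}_1 , \dots , \bar{\mathbf{c}}_k \) are in general position in \( H \) (which has dimension \( d - 1 \)) it suffices to prove that they are affinely independent: affine independence is inherited by subsets, so if the whole \( k \)-element set is affinely independent then every subset of size \( m \leq k \leq d \) has affine span of dimension \( m - 1 \leq d - 1 \), which is exactly the requirement of Definition~\ref{def:generalposition} in \( H \) (and it also yields that the \( \bar{\mathbf{c}}_i \) are distinct). Writing \( W = \setof{ \mathbf{v} }{ \mathbf{v} \boldsymbol{\cdot} \mathbf{u} = 0 } \) for the vector space underlying \( H \) and \( P \) for the linear orthogonal projection of \( \R^d \) onto \( W \), differences of projections are projections of differences, so for \( i < k \)
\[
\bar{\mathbf{c}}_i - \bar{\mathbf{c}}_k = P ( \mathbf{c}_i - \mathbf{c}_k ) = P ( \mathbf{v}_i - \mathbf{u} ) = P ( \mathbf{v}_i ) ,
\]
using \( \mathbf{c}_i - \mathbf{c}_k = \mathbf{v}_i - \mathbf{v}_k = \mathbf{v}_i - \mathbf{u} \) and \( P ( \mathbf{u} ) = \mathbf{0} \). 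Thus affine independence of the \( \bar{\mathbf{c}}_i \) amounts to linear independence of \( P ( \mathbf{v}_1 ) , \dots , P ( \mathbf{v}_{ k - 1 } ) \).

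For the latter, suppose \( \sum_{ i = 1 }^{ k - 1 } a_i P ( \mathbf{v}_i ) = \mathbf{0} \); then \( \sum_{ i = 1 }^{ k - 1 } a_i \mathbf{v}_i \in \ker P = \Span \setLR{ \mathbf{u} } = \Span \setLR{ \mathbf{v}_k } \), so \( \sum_{ i = 1 }^{ k - 1 } a_i \mathbf{v}_i = b \mathbf{v}_k \) for some scalar \( b \), and the linear independence of \( \mathbf{v}_1 , \dots , \mathbf{v}_k \) forces every \( a_i \) (and \( b \)) to vanish. I do not expect any genuine obstacle: the content is elementary linear algebra. The only point needing care is to keep the affine data (the points \( \mathbf{c}_i \), the affine hyperplane \( H \), the affine projection \( \pi \)) cleanly separated from the associated linear data (the vectors \( \mathbf{v}_i \), the subspace \( W \), the linear projection \( P \)), so that the passage to differences \( \bar{\mathbf{c}}_i - \bar{\mathbf{c}}_k = P ( \mathbf{c}_i - \mathbf{c}_k ) \) and the identification \( \ker P = \Span \setLR{ \mathbf{u} } \) are both correctly justified from the orthogonality of \( \mathbf{u} \) to \( H \).
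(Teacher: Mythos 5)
Your proof is correct and takes essentially the same route as the paper's: both reduce the claim to linear independence of the projected difference vectors and kill the coefficients using the general position of all \( k + 1 \) original points, your \( \ker P = \Span \setLR{ \mathbf{u} } \) step being just a cleaner packaging of the paper's explicit substitution \( \bar{\mathbf{c}}_i = \mathbf{c}_i + s_i ( \mathbf{c}_{ k + 1 } - \mathbf{c}_k ) \). The only organizational difference is that the paper verifies distinctness of the \( \bar{\mathbf{c}}_i \) by a separate collinearity/coplanarity argument, whereas you correctly observe that it already follows from the affine independence you establish.
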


\begin{proof}
If \( \bar{\mathbf{c}}_i = \bar{\mathbf{c}}_k \) for some \( i < k \), then \( \mathbf{c}_i , \mathbf{c}_k , \mathbf{c}_{ k + 1 } \) must be collinear, and if \( \bar{\mathbf{c}}_i = \bar{\mathbf{c}}_j \) for some \( i < j < k \), then \( \mathbf{c}_i - \mathbf{c}_j \) and \( \mathbf{c}_k - \mathbf{c}_{ k + 1 } \) must be parallel, so the four points \( \mathbf{c}_i , \mathbf{c}_j , \mathbf{c}_k , \mathbf{c}_{ k + 1 } \) are coplanar.
In either case this contradicts the assumption that \( \mathbf{c}_1 , \dots , \mathbf{c}_{k + 1} \) are in general position.
Therefore the points \( \bar{\mathbf{c}}_1 , \dots , \bar{\mathbf{c}}_k \) are distinct.

In order to conclude the proof it is enough to show that the vectors \( \bar{\mathbf{c}}_1 - \bar{\mathbf{c}}_{k} \), \( \bar{\mathbf{c}}_{2} - \bar{\mathbf{c}}_{k} \), \ldots , \( \bar{\mathbf{c}}_{ k - 1 } - \bar{\mathbf{c}}_{k} \) of \( \R^d \) are linearly independent.
Towards a contradiction, suppose there are \( ( r_1 , \dots , r_{ k - 1 } ) \neq ( 0 , \dots , 0 ) \) such that 
\[ 
\mathbf{0} = r_1 ( \bar{\mathbf{c}}_1 - \bar{\mathbf{c}}_{k} ) + r_2 ( \bar{\mathbf{c}}_2 - \bar{\mathbf{c}}_{k} ) + \dots + r_{ k - 1 }( \bar{\mathbf{c}}_{ k - 1 } - \bar{\mathbf{c}}_{k} ) .
\]
As \( \bar{\mathbf{c}}_i \) is the projection of \( \mathbf{c}_i \) along the vector \( \mathbf{c}_{ k + 1 } - \mathbf{c}_k \), there are \( s_1 , \dots , s_k \in \R \) such that \( \bar{\mathbf{c}}_i = \mathbf{c}_i + s_i ( \mathbf{c}_{ k + 1 } - \mathbf{c}_k ) \).
Substituting in the previous formula we obtain
\begin{multline*}
\mathbf{0} = r_1 ( \mathbf{c}_1 - \mathbf{c}_k ) + \dots + r_{ k - 1 } ( \mathbf{c}_{ k - 1 } - \mathbf{c}_k ) 
\\
{}+ [ r_1 ( s_1 - s_k ) + \dots + r_{ k - 1 } ( s_{k - 1 } - s_k ) ] ( \mathbf{c}_{ k + 1} - \mathbf{c}_k ) ,
\end{multline*}
and since \( \mathbf{c}_1 , \dots , \mathbf{c}_{k + 1} \) are in general position in \( \R^d \), then \( r_1 = \dots = r_{ k - 1} = 0 \) and \( [ r_1 ( s_1 - s_k ) + \dots + r_{ k - 1 } ( s_{k - 1 } - s_k ) ] = 0 \), against our choice of the \( r_i \)s.
\end{proof}

\begin{example}\label{xmp:well-placed}
Suppose \( N \geq d \) and \( \mathbf{c}_1 , \dots , \mathbf{c}_N \) are distinct points of \( \R^d \) such that any \( d \) of them span a subspace of dimension \( d - 1 \).
If \( \mathcal{H}_i \) is the collection of all spheres of \( \R^d \) with center \( \mathbf{c}_i \), then \( ( \mathcal{H}_i )_{ i = 1 }^N \) is locally finite, and of mesh \( d \).
Here is why.

It is enough to consider the case \( N = d \).
If \( d = 1 \) this is trivial since a sphere in \( \R \) is just a pair of points that are symmetric with respect to the center, and if \( d = 2 \) it follows immediately from the fact that the intersection of two circles with distinct centers has size at most \( 2 \).
If \( d = 3 \) this follows from the fact that the intersection of three spheres in \( \R^3 \) with non-collinear centers has size at most \( 2 \).
The case for \( d > 3 \) follows from Theorem~\ref{th:intersectionspheres} below.
\end{example}

\begin{lemma}\label{lem:intersectionspherewithhyperplane}
Suppose \( H \) is an hyperplane of an affine space \( E \) of dimension \( d + 1 \).
Let \( \pi \colon E \to H \) be the orthogonal projection, and let \( h \) be the distance between a point \( \mathbf{c} \in E \) and \( H \).
Then
\[
 \FORALL{R \in \R}\EXISTS{r \in \R}\left ( \mathbb{S} ( E ; \mathbf{c} , R ) \cap H = \mathbb{S} ( H ; \pi ( \mathbf{c} ) , r ) \right )
\]
where
\begin{enumerate}[label={\upshape (\roman*)}]
\item\label{lem:intersectionspherewithhyperplane-i}
\( R > h \IFF R > r > 0 \),
\item\label{lem:intersectionspherewithhyperplane-ii}
\( R = h \IFF r = 0 \), and in this case \( \mathbb{S}( H ; \pi ( \mathbf{c} ) , r ) \) is the singleton \( \setLR{ \pi ( \mathbf{c} ) } \),
\item\label{lem:intersectionspherewithhyperplane-iii}
\( R < h \IFF r < 0 \), that is \( \mathbb{S} ( H ; \pi ( \mathbf{c} ) , r ) = \emptyset \). 
\end{enumerate}

Conversely, if \( \bar{\mathbf{c}} \in H \) and \( \ell \) is the line orthogonal to \( H \) passing through \(\bar{ \mathbf{c}} \), then 
\[
\FORALL{ r > 0 } \FORALL{ \mathbf{c} \in \ell } \EXISTS{R > 0 } \left ( \mathbb{S} ( E ; \mathbf{c} , R ) \cap H = \mathbb{S} ( H , \bar{\mathbf{c}} , r ) \right ) .
\]
\end{lemma}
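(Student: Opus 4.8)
The plan is to reduce the whole statement to a single application of the Pythagorean theorem inside $E$. Write $\vec H$ for the vector subspace underlying $H$. For an arbitrary point $\mathbf{x} \in H$ I would decompose
\[
\mathbf{x} - \mathbf{c} = \bigl ( \mathbf{x} - \pi ( \mathbf{c} ) \bigr ) + \bigl ( \pi ( \mathbf{c} ) - \mathbf{c} \bigr ) ,
\]
where the first summand lies in $\vec H$ (both $\mathbf{x}$ and $\pi ( \mathbf{c} )$ belong to $H$) and the second is orthogonal to $\vec H$ by the defining property of the orthogonal projection $\pi$. These two vectors are therefore orthogonal, and since $\| \pi ( \mathbf{c} ) - \mathbf{c} \| = h$, the Pythagorean theorem yields the key identity
\[
\| \mathbf{x} - \mathbf{c} \|^2 = \| \mathbf{x} - \pi ( \mathbf{c} ) \|^2 + h^2 \qquad ( \mathbf{x} \in H ) .
\]

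Everything then follows by bookkeeping the sign conventions for radii. Since statement~\ref{lem:intersectionspherewithhyperplane-i} forces the strict inequality $r < R$, which fails when $h = 0$, I would first record that the relevant case is $\mathbf{c} \notin H$, so that $h > 0$ (if $\mathbf{c} \in H$ the section is simply $\mathbb{S}(H; \mathbf{c}, R)$ itself); hence I assume $h > 0$. I define $r$ by cases: if $R \geq h$ put $r = \sqrt{R^2 - h^2} \geq 0$, and if $R < h$ (in particular if $R < 0$) let $r$ be any negative number. For $R \geq 0$ a point $\mathbf{x} \in H$ satisfies $\| \mathbf{x} - \mathbf{c} \| = R$ iff $\| \mathbf{x} - \mathbf{c} \|^2 = R^2$, which by the identity is equivalent to $\| \mathbf{x} - \pi ( \mathbf{c} ) \|^2 = R^2 - h^2 = r^2$; this gives $\mathbb{S}(E; \mathbf{c}, R) \cap H = \mathbb{S}(H; \pi ( \mathbf{c} ), r)$ whenever $R \geq h$. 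When $R < h$ the identity gives $\| \mathbf{x} - \mathbf{c} \| \geq h > R$ for every $\mathbf{x} \in H$ (and the sphere is empty outright if $R < 0$), so both sides are empty, matching the convention that $\mathbb{S}(H; \pi(\mathbf{c}), r) = \emptyset$ for $r < 0$.

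With $r$ so defined the three biconditionals are immediate, since the cases $R > h$, $R = h$, $R < h$ partition $\R$ and are sent respectively to $0 < r < R$ (here $h > 0$ supplies the strict inequality $r < R$), to $r = 0$, and to $r < 0$; in the middle case $\mathbb{S}(H; \pi(\mathbf{c}), 0) = \setLR{ \pi(\mathbf{c}) }$ by the standing convention, giving~\ref{lem:intersectionspherewithhyperplane-ii}. For the converse, given $\bar{\mathbf{c}} \in H$, the line $\ell$ orthogonal to $H$ through $\bar{\mathbf{c}}$, and $\mathbf{c} \in \ell$, the point $\bar{\mathbf{c}}$ is exactly $\pi(\mathbf{c})$ because $\mathbf{c} - \bar{\mathbf{c}}$ is orthogonal to $\vec H$; writing $h = \| \mathbf{c} - \bar{\mathbf{c}} \|$ and setting $R = \sqrt{r^2 + h^2}$ (so $R > 0$ as $r > 0$) gives $R^2 - h^2 = r^2$, and the same identity yields $\mathbb{S}(E; \mathbf{c}, R) \cap H = \mathbb{S}(H; \bar{\mathbf{c}}, r)$. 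I expect no serious obstacle: the only delicate points are checking the orthogonality that licenses Pythagoras and keeping the degenerate radii ($r = 0$, $r < 0$, and the borderline $h = 0$) consistent with the conventions, all of which are routine.
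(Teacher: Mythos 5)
Your proof is correct and follows essentially the same route as the paper's: the paper chooses coordinates so that \( H = \R^d \times \set{0} \) and \( \mathbf{c} = ( 0 , \dots , 0 , h ) \), whereupon its computation \( ( \sum_i x_i^2 ) + h^2 = R^2 \) is exactly your coordinate-free Pythagorean identity \( \| \mathbf{x} - \mathbf{c} \|^2 = \| \mathbf{x} - \pi ( \mathbf{c} ) \|^2 + h^2 \). Your explicit treatment of the degenerate case \( h = 0 \) (where the strict inequality in \ref{lem:intersectionspherewithhyperplane-i} would fail) is a minor point the paper glosses over, but otherwise the two arguments coincide.
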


\begin{proof}
Without loss of generality we may assume that \( E = \R^{ d + 1 } \), that \( H = \R^d \times \set{0} \), \( \mathbf{c} = ( 0 , \dots , 0 , h ) \), and \( \pi ( \mathbf{c} ) = \mathbf{0} \).
It is clear that \( R < h \) if and only if \( \mathbb{S} ( \mathbf{c} , R ) \cap H = \emptyset \), and that \( R = h \) if and only if \( \mathbb{S} ( \mathbf{c} , R ) \cap H = \set{\pi ( \mathbf{c} ) } \), so \ref{lem:intersectionspherewithhyperplane-ii} and \ref{lem:intersectionspherewithhyperplane-iii} are true.
Suppose \( R > h \).
Let \( r = \sqrt{ R^2 - h^2 } \) so that \( r^2 + h^2 = R^2 \).
Then 
\[
\begin{split}
 \mathbf{x} \in \mathbb{S} ( E ; \mathbf{c} , R ) \cap H & \IFF \mathbf{x} = ( x_1 , \dots , x_d , 0 ) \wedge ( \textstyle \sum_{i =1}^d x_i^2 ) + h^2 = R^2
\\
 & \IFF x_1^2 + \dots + x_d ^2 = r^2 
 \\
 & \IFF \| \mathbf{x} \| = r 
\\
 & \IFF \mathbf{x} \in \mathbb{S} ( H ; \pi ( \mathbf{c} ) , r ) . 
\end{split}
\]

For the second part, fix \( \mathbf{c} \in \ell \) and let \( R = \sqrt{ r^2 + h^2 } \) where \( h = \| \mathbf{c} - \bar{ \mathbf{c}} \| \) is the distance from \( \mathbf{c} \) to \( H \), and proceed as before.
\end{proof}

\begin{lemma}\label{lem:intersectionoftwospheres}
Suppose \( \mathbf{c}_1 , \mathbf{c}_2 \) are distinct points of an affine space \( E \) of dimension \( d \geq 2 \), and let \( r_1 , r_2 > 0 \) be such that \( \card{ r_1 - r_2 } \leq \| \mathbf{c}_1 - \mathbf{c}_2 \| \leq r_1 + r_2 \).
Then there exist and are unique \( r , \mathbf{c} , H \) such that 
\begin{itemize}
\item
\( r \) is a real number \( \geq 0 \),
\item
\( \mathbf{c} \) is a point on the open segment \( ( \mathbf{c}_1 ; \mathbf{c}_2 ) \), 
\item
\( H \) is the hyperplane passing through \( \mathbf{c} \) and orthogonal to \( \mathbf{c}_1 - \mathbf{c}_2 \),
\end{itemize}
and
\[ 
\mathbb{S} ( E ; \mathbf{c}_1 , r_1 ) \cap \mathbb{S} ( E ; \mathbf{c}_2 , r_2 ) = \mathbb{S} ( H ; \mathbf{c} , r ) .
\]
\end{lemma}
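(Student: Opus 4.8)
The plan is to linearize the problem by subtracting the two defining equations, producing the \emph{radical hyperplane}, and then to invoke Lemma~\ref{lem:intersectionspherewithhyperplane} to recognize the intersection as a lower-dimensional sphere. Since everything in sight is isometry-invariant, I would first reduce to coordinates: after an isometry I may assume \( E = \R^d \), that the line \( \ell \) through \( \mathbf{c}_1 \) and \( \mathbf{c}_2 \) is a coordinate axis, and that \( \mathbf{c}_1 , \mathbf{c}_2 \) sit at signed heights \( 0 \) and \( D \coloneqq \| \mathbf{c}_1 - \mathbf{c}_2 \| > 0 \) along \( \ell \).

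First I would write the two membership conditions \( \| \mathbf{x} - \mathbf{c}_1 \|^2 = r_1^2 \) and \( \| \mathbf{x} - \mathbf{c}_2 \|^2 = r_2^2 \) and subtract them. The quadratic term \( \| \mathbf{x} \|^2 \) cancels, leaving the single linear equation \( 2 \, \mathbf{x} \boldsymbol{\cdot} ( \mathbf{c}_2 - \mathbf{c}_1 ) = r_1^2 - r_2^2 + \| \mathbf{c}_2 \|^2 - \| \mathbf{c}_1 \|^2 \), which defines a hyperplane \( H \) orthogonal to \( \mathbf{c}_1 - \mathbf{c}_2 \) in the sense of Notation~\ref{ntn:hyperplane}. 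This shows \( \mathbb{S} ( E ; \mathbf{c}_1 , r_1 ) \cap \mathbb{S} ( E ; \mathbf{c}_2 , r_2 ) \subseteq H \); conversely, the radical equation holds identically on \( H \), so a point of \( H \) lying on the first sphere automatically lies on the second. Hence \( \mathbb{S} ( E ; \mathbf{c}_1 , r_1 ) \cap \mathbb{S} ( E ; \mathbf{c}_2 , r_2 ) = \mathbb{S} ( E ; \mathbf{c}_1 , r_1 ) \cap H \), reducing everything to a sphere-meets-hyperplane intersection.

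Now I would apply Lemma~\ref{lem:intersectionspherewithhyperplane} with center \( \mathbf{c}_1 \) and the hyperplane \( H \). Because \( H \) is orthogonal to \( \mathbf{c}_1 - \mathbf{c}_2 \), the line \( \ell \) is orthogonal to \( H \), so the orthogonal projection of \( \mathbf{c}_1 \) onto \( H \) is precisely the single point \( \mathbf{c} \coloneqq \ell \cap H \); the lemma then yields \( \mathbb{S} ( E ; \mathbf{c}_1 , r_1 ) \cap H = \mathbb{S} ( H ; \mathbf{c} , r ) \) with \( r = \sqrt{ r_1^2 - h^2 } \), where \( h = \| \mathbf{c}_1 - \mathbf{c} \| \). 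Solving the radical equation along \( \ell \) places \( \mathbf{c} \) at signed height \( t = ( D^2 + r_1^2 - r_2^2 ) / ( 2 D ) \) from \( \mathbf{c}_1 \), whence \( h = | t | \) and \( r^2 = r_1^2 - t^2 \). Uniqueness is then immediate: any admissible \( H \) must contain the intersection and be orthogonal to \( \mathbf{c}_1 - \mathbf{c}_2 \), which forces \( H \), and \( \mathbf{c} , r \) are the uniquely determined center and radius of the resulting sphere.

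The part I would treat most carefully is verifying the two numerical claims, namely \( r \geq 0 \) and the location of \( \mathbf{c} \). For \( r \geq 0 \) I would record the Heron-type factorization \( 4 D^2 r^2 = [ ( r_1 + r_2 )^2 - D^2 ] [ D^2 - ( r_1 - r_2 )^2 ] \), so that the hypothesis \( | r_1 - r_2 | \leq D \leq r_1 + r_2 \) makes both bracketed factors nonnegative and hence \( r^2 \geq 0 \); in fact these inequalities are exactly the condition for the two factors, and thus the intersection, to be nonnegative and nonempty. Locating \( \mathbf{c} \) then comes down to the sign analysis of \( t \), i.e.\ to checking that \( t \) lands strictly between \( 0 \) and \( D \); this is the step I would scrutinize most, since it is here that the precise relation between \( D \) and the radii \( r_1 , r_2 \) must be exploited rather than just the bare nonemptiness condition.
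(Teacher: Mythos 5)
Your radical-hyperplane argument is correct in substance but takes a genuinely different route from the paper. The paper argues synthetically: it picks a point \( \mathbf{x} \) of the intersection, forms the triangle with vertices \( \mathbf{c}_1 , \mathbf{c}_2 , \mathbf{x} \), drops the altitude from \( \mathbf{x} \) to obtain \( \mathbf{c} \) and \( r \), and observes that \( r , \mathbf{c} , H \) depend only on \( \mathbf{c}_1 , \mathbf{c}_2 , r_1 , r_2 \). You instead subtract the two sphere equations to get the radical hyperplane \( H \) directly, reduce to a sphere-meets-hyperplane intersection, and invoke Lemma~\ref{lem:intersectionspherewithhyperplane}. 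Your route buys a cleaner uniqueness argument (the linear equation pins down \( H \), hence \( \mathbf{c} \) and \( r \), before one even knows the intersection is nonempty), and the Heron factorization \( 4 D^2 r^2 = [ ( r_1 + r_2 )^2 - D^2 ] [ D^2 - ( r_1 - r_2 )^2 ] \) isolates exactly where the hypothesis \( \card{ r_1 - r_2 } \leq D \leq r_1 + r_2 \) enters; the paper's picture is shorter but relies on the triangle being in the ``nice'' configuration.

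The step you defer, namely that \( t = ( D^2 + r_1^2 - r_2^2 ) / ( 2 D ) \) lies strictly between \( 0 \) and \( D \), deserves all the scrutiny you promise it, because it is not provable from the stated hypotheses. Take \( r_1 = 10 \), \( r_2 = 1 \), \( D = 9.5 \): then \( \card{ r_1 - r_2 } = 9 < D < 11 = r_1 + r_2 \), yet \( t = 189.25 / 19 \approx 9.96 > D \), so the center of the intersection sphere lies on the line through \( \mathbf{c}_1 \) and \( \mathbf{c}_2 \) but \emph{beyond} \( \mathbf{c}_2 \), outside the open segment. (Geometrically, the angle of the triangle at \( \mathbf{c}_2 \) is obtuse, so the foot of the altitude falls outside the base; internal tangency \( D = \card{ r_1 - r_2 } \) gives the same failure.) This is a defect of the lemma's statement rather than of your argument: the paper's own proof makes the same unexamined assertion that the foot of the altitude belongs to \( ( \mathbf{c}_1 ; \mathbf{c}_2 ) \). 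It is harmless downstream, since Proposition~\ref{prop:intersectionofspheres} and Theorem~\ref{th:intersectionspheres} only use that \( H \) is orthogonal to \( \mathbf{c}_1 - \mathbf{c}_2 \) and that \( \mathbf{c} \) lies on the line through the two centers, both of which you do establish. But as written, neither your proof nor the paper's yields the ``open segment'' clause; you should either weaken it to ``a point on the line through \( \mathbf{c}_1 \) and \( \mathbf{c}_2 \)'' or record that it can fail.
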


\begin{proof}
We may assume that \( E = \R^d \).
The hypothesis \( \card{ r_1 - r_2 } \leq \| \mathbf{c}_1 - \mathbf{c}_2 \| \) guarantees that neither sphere properly contains inside the other sphere, while the hypothesis \( \| \mathbf{c}_1 - \mathbf{c}_2 \| \leq r_1 + r_2 \) ensures that if neither sphere contains the other, then they are close enough to intersect.

Let \( \mathbf{x} \) be a point of \( \mathbb{S} ( \mathbf{c}_1 , r_1 ) \cap \mathbb{S} ( \mathbf{c}_2 , r_2 ) \), and let \( P \) be the plane passing through \( \mathbf{c}_0 , \mathbf{c}_1 , \mathbf{x} \).
(If \( d = 2 \) then \( P \) coincides with \( \R^2 \).)
Consider the triangle with vertices \( \mathbf{c}_0 , \mathbf{c}_1 , \mathbf{x} \).

If \( r_1 + r_2 = \| \mathbf{c}_1 - \mathbf{c}_2 \| \) then the two spheres are tangent in the point \( \mathbf{c} \coloneqq \mathbf{x} \) which belongs to the open segment \( ( \mathbf{c}_1 ; \mathbf{c}_2 ) \), so the triangle is degenerate and letting \( r = 0 \) we have that \( \mathbb{S} ( H ; \mathbf{c} , r ) = \set{\mathbf{c} } \).

So we may assume that \( r_1 + r_2 > \| \mathbf{c}_1 - \mathbf{c}_2 \| \) and that the triangle 
\[
\begin{tikzpicture}[scale=1.5]
\node (c1) at (0,0) [anchor=north east] {\( \mathbf{c}_1 \)};
\node (c2) at (3,0) [anchor= north west] {\( \mathbf{c}_2 \)};
\node (x) at (2,1) [anchor= south ] {\( \mathbf{x} \)};
\node (c) at (2,0) [anchor= north ] {\( \mathbf{c} \)};
\draw[-] (0,0) -- (3,0)--(2,1)--cycle;
\draw [-, dotted] (2,0)--(2,1);
\node at (1, 0.5) [anchor=south]{\( r_1 \)};
\node at (2.5, 0.5) [anchor=south]{\( r_2 \)};
\node at (2, 0.5) [anchor=east]{\( r \)};
\end{tikzpicture}
\]
is non-degenerate.
Let \( \mathbf{c} \) is the point on the open segment \( ( \mathbf{c}_1 ; \mathbf{c}_2 ) \) whose distance from \( \mathbf{c}_i \) is \( \sqrt{r_i^2 - r^2} \), and let \( H \) be the hyperplane orthogonal to \( \mathbf{c}_1 - \mathbf{c}_2 \) passing through \( \mathbf{c} \).
Observe that \( r \), the point \( \mathbf{c} \), and the hyperplane \( H \), depend only on the points \( \mathbf{c}_1 \) and \( \mathbf{c}_2 \), and not on the point \( \mathbf{x} \) or the plane \( P \).
Moreover \( r = \| \mathbf{x} - \mathbf{c} \| \).
Therefore for all \( \mathbf{x} \in \R^d \),
\[
\begin{split}
\mathbf{x} \in \mathbb{S} ( \mathbf{c}_1 , r_1 ) \cap \mathbb{S} ( \mathbf{c}_2 , r_2 ) & \IFF \| \mathbf{x} - \mathbf{c}_1 \| = r_1 \wedge \| \mathbf{x} - \mathbf{c}_2 \| = r_2
\\
 & \IFF \mathbf{x} \in H \wedge \| \mathbf{c} - \mathbf{x} \| = r .
\end{split}
\]
The argument above shows that \( r , \mathbf{c} , H \) are unique.
\end{proof}

\begin{proposition}\label{prop:intersectionofspheres}
Let \( E \) be an affine space of dimension \( d \geq 2 \), let \( \mathbf{c}_1 , \dots , \mathbf{c}_k \) be in general position in \( E \) with \( k \leq d \), and let \( K \) be the affine span of the \( \mathbf{c}_i \)s.
Let \( S_i \) be a sphere centered in \( \mathbf{c}_i \) for \( i \leq k \).
\begin{enumerate}[label={\upshape (\alph*)}]
\item\label{prop:intersectionofspheres-a}
There are \( r \in \R \) and an affine subspace \( H \) of \( E \) of dimension \( d - ( k - 1 ) \), such that \( H \cap K \) is a singleton \( \setLR{ \mathbf{c}} \) and \( H \) and \( K \) are orthogonal, so that \( S_1 \cap \dots \cap S_k = \mathbb{S} ( H ; \mathbf{c} , r ) \).
\item\label{prop:intersectionofspheres-b}
If \( \mathbf{c}_{ k + 1 } \in E \setminus \set{ \mathbf{c}_1 , \dots , \mathbf{c}_k } \) and \( \set{ \mathbf{c}_1 , \dots , \mathbf{c}_k ,\mathbf{c}_{ k + 1 } } \) is not in general position in \( E \), then there is \( R \in \R \) such that
\[
 S_1 \cap \dots \cap S_k \subseteq \mathbb{S} ( \mathbf{c}_{ k + 1 } , R ) .
\] 
\end{enumerate}
\end{proposition}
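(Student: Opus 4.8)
The plan is to prove \ref{prop:intersectionofspheres-a} by induction on $k$, and then deduce \ref{prop:intersectionofspheres-b} from it by a short inner-product computation. The base case $k=1$ is immediate: a single sphere $S_1 = \mathbb{S}(E;\mathbf{c}_1,r_1)$ already has the required form, taking $H=E$, $\mathbf{c}=\mathbf{c}_1$ and $K=\setLR{\mathbf{c}_1}$, since then $\dim H = d = d-(k-1)$ and $K$ is a point, so the orthogonality and singleton conditions hold trivially.

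For the inductive step, assume the claim for the first $k-1$ spheres, so that $S_1\cap\dots\cap S_{k-1}=\mathbb{S}(H';\mathbf{c}',r')$, where $K'$, the affine span of $\mathbf{c}_1,\dots,\mathbf{c}_{k-1}$, has dimension $k-2$, while $\dim H'=d-(k-2)$, $H'\perp K'$, and $H'\cap K'=\setLR{\mathbf{c}'}$. First I would intersect $S_k$ with $H'$: writing $\bar{\mathbf{c}}_k$ for the orthogonal projection of $\mathbf{c}_k$ onto $H'$ and $h$ for the distance from $\mathbf{c}_k$ to $H'$, the Pythagorean identity $\|\mathbf{x}-\mathbf{c}_k\|^2=\|\mathbf{x}-\bar{\mathbf{c}}_k\|^2+h^2$ for $\mathbf{x}\in H'$ shows, exactly as in Lemma~\ref{lem:intersectionspherewithhyperplane}, that $S_k\cap H'=\mathbb{S}(H';\bar{\mathbf{c}}_k,\rho)$ for a suitable $\rho$. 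Hence $S_1\cap\dots\cap S_k=\mathbb{S}(H';\mathbf{c}',r')\cap\mathbb{S}(H';\bar{\mathbf{c}}_k,\rho)$ is an intersection of two spheres inside the affine space $H'$, whose dimension $d-k+2$ is $\geq 2$ because $k\leq d$. Before invoking Lemma~\ref{lem:intersectionoftwospheres} inside $H'$ I must check $\mathbf{c}'\neq\bar{\mathbf{c}}_k$: were they equal, $\mathbf{c}_k-\mathbf{c}'$ would be orthogonal to $H'$, hence lie in the vector space $V_{K'}$ associated to $K'$, forcing $\mathbf{c}_k\in K'$ and contradicting the general position of $\mathbf{c}_1,\dots,\mathbf{c}_k$. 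Lemma~\ref{lem:intersectionoftwospheres} then produces $S_1\cap\dots\cap S_k=\mathbb{S}(H;\mathbf{c},r)$, with $\mathbf{c}$ on the segment $(\mathbf{c}';\bar{\mathbf{c}}_k)$ and $H$ the hyperplane of $H'$ through $\mathbf{c}$ orthogonal (inside $H'$) to $\mathbf{c}'-\bar{\mathbf{c}}_k$, so $\dim H=\dim H'-1=d-(k-1)$.

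The step I expect to be most delicate is verifying that this $H$ has the properties required in \ref{prop:intersectionofspheres-a} relative to the \emph{full} span $K$ of $\mathbf{c}_1,\dots,\mathbf{c}_k$, not just relative to $K'$. Writing $V_K,V_{K'},V_H$ for the associated vector spaces, the key identities are $V_K=V_{K'}+\Span(\mathbf{c}_k-\mathbf{c}')$, the fact that the residual $\mathbf{c}_k-\bar{\mathbf{c}}_k$ lies in $V_{H'}^{\perp}=V_{K'}$, and the fact that $\bar{\mathbf{c}}_k-\mathbf{c}'$ is by construction the direction that $H$ is orthogonal to inside $H'$. Combining these gives $V_H\perp V_K$, and since $\dim V_H+\dim V_K=d$ the two are orthogonal complements, so $H$ and $K$ are complementary and meet in a single point. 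Finally $\mathbf{c}\in K$, because both $\mathbf{c}'\in K'\subseteq K$ and $\bar{\mathbf{c}}_k=\mathbf{c}_k-(\mathbf{c}_k-\bar{\mathbf{c}}_k)\in K$, whence $\mathbf{c}\in H\cap K=\setLR{\mathbf{c}}$. The degenerate cases in which an intermediate intersection is empty ($r'<0$ or $\rho<0$) or a single point ($r'=0$ or $\rho=0$) fall outside the hypotheses of Lemma~\ref{lem:intersectionoftwospheres} and I would dispose of them directly: an empty intersection equals $\mathbb{S}(H;\mathbf{c},r)$ for any $H$ orthogonal to $K$ through a point of $K$ with $r<0$, and a singleton is handled with $r=0$.

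For part \ref{prop:intersectionofspheres-b} I would first note that for a set of $k+1\leq d+1$ points, and with $\mathbf{c}_1,\dots,\mathbf{c}_k$ already in general position, Definition~\ref{def:generalposition} reduces to affine independence, so ``$\setLR{\mathbf{c}_1,\dots,\mathbf{c}_{k+1}}$ is not in general position'' is exactly the condition $\mathbf{c}_{k+1}\in K$. Taking $S_1\cap\dots\cap S_k=\mathbb{S}(H;\mathbf{c},r)$ from part \ref{prop:intersectionofspheres-a}, for any $\mathbf{x}$ on this sphere I expand
\[
\|\mathbf{x}-\mathbf{c}_{k+1}\|^2=\|\mathbf{x}-\mathbf{c}\|^2+2(\mathbf{x}-\mathbf{c})\boldsymbol{\cdot}(\mathbf{c}-\mathbf{c}_{k+1})+\|\mathbf{c}-\mathbf{c}_{k+1}\|^2 .
\]
Here $\|\mathbf{x}-\mathbf{c}\|=r$ and $\|\mathbf{c}-\mathbf{c}_{k+1}\|$ are constant, while $\mathbf{x}-\mathbf{c}\in V_H=V_K^{\perp}$; since $\mathbf{c}_{k+1}\in K$ and $\mathbf{c}\in K$ give $\mathbf{c}-\mathbf{c}_{k+1}\in V_K$, the cross term vanishes. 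Thus $\|\mathbf{x}-\mathbf{c}_{k+1}\|$ is the constant $R=\sqrt{r^2+\|\mathbf{c}-\mathbf{c}_{k+1}\|^2}$, giving $S_1\cap\dots\cap S_k\subseteq\mathbb{S}(\mathbf{c}_{k+1},R)$; the containment is trivial when the intersection is empty or a single point.
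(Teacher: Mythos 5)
Your argument is correct, and your proof of part~\ref{prop:intersectionofspheres-b} is essentially identical to the paper's (the same expansion of \( \| \mathbf{x} - \mathbf{c}_{ k + 1 } \|^2 \) with the cross term killed by \( V_H \perp V_K \)); but your induction for part~\ref{prop:intersectionofspheres-a} is organized quite differently. The paper intersects the \emph{last two} spheres \( S_k \cap S_{ k + 1 } \) first, obtaining a sphere in a genuine hyperplane \( H \) of \( E \), restricts the remaining spheres to \( H \) via Lemma~\ref{lem:intersectionspherewithhyperplane}, and invokes Lemma~\ref{lem:projectiogeneralposition} to see that the projected centers are still in general position in \( H \cong \R^{ d - 1 } \); its induction thus decreases \( k \) and the ambient dimension simultaneously, and the invariant being propagated is general position of the centers. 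You instead keep the ambient space fixed and fold from the left: your inductive hypothesis is the full conclusion of~\ref{prop:intersectionofspheres-a} for \( \mathbf{c}_1 , \dots , \mathbf{c}_{ k - 1 } \), including \( H' \perp K' \) and \( H' \cap K' = \setLR{ \mathbf{c}' } \), and you intersect the resulting sphere with \( S_k \) inside \( H' \). This buys you complete independence from Lemma~\ref{lem:projectiogeneralposition}, at the price of (i) a mild extension of Lemma~\ref{lem:intersectionspherewithhyperplane} from hyperplanes to affine subspaces of higher codimension --- harmless, as you note, since it is the same Pythagorean identity --- and (ii) a hand-verification that the orthogonal-complement data propagates, which you carry out correctly: the decomposition \( \mathbf{c}_k - \mathbf{c}' = ( \mathbf{c}_k - \bar{\mathbf{c}}_k ) + ( \bar{\mathbf{c}}_k - \mathbf{c}' ) \), with the first summand in \( V_{H'}^{\perp} = V_{K'} \) and the second orthogonal to \( V_H \) by construction, is exactly the crux, and your argument that \( \mathbf{c}' \neq \bar{\mathbf{c}}_k \) (else \( \mathbf{c}_k \in K' \)) is the right non-degeneracy check. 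One small point on the degenerate cases: besides \( r' \leq 0 \) or \( \rho \leq 0 \), the two spheres in \( H' \) can also fail to meet when both radii are positive but the inequality \( \card{ r' - \rho } \leq \| \mathbf{c}' - \bar{\mathbf{c}}_k \| \leq r' + \rho \) of Lemma~\ref{lem:intersectionoftwospheres} fails; your generic disposal of the empty intersection (take \( r < 0 \)) covers this, but it should be listed alongside the others. The paper is no more detailed on this score, so this is a matter of bookkeeping, not a gap.
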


\begin{proof}
We may assume that \( E = \R^d \).

If \( d = 2 \) then the result is clear.
Part~\ref{prop:intersectionofspheres-a} amounts to say that: given two circles \( C_1 , C_2 \) in the plane with distinct centers \( \mathbf{c}_1 \), \( \mathbf{c}_2 \), either \( I \coloneqq C_1 \cap C_2 \) is empty, or else there is a point \( \mathbf{c} \) on the open segment \( ( \mathbf{c}_1 ; \mathbf{c}_2 ) \) such that \( I = \set{\mathbf{c}} \), or else \( I \) is the set of two points on line orthogonal to \( ( \mathbf{c}_1 ; \mathbf{c}_2 ) \) passing through \( \mathbf{c} \), and symmetric with respect to \( \mathbf{c} \).
Part~\ref{prop:intersectionofspheres-b} says that if three points are collinear, then given any two circles centered in the first two points there is a (possibly degenerate) circle centered in the third point that passes through the intersection of the first two circles.

Therefore we may assume that \( d \geq 3 \).

\smallskip

\ref{prop:intersectionofspheres-a} 
We proceed by induction on \( k \).
If \( k = 1 \) the result is trivial, so we may assume the result holds for some \( k \) towards proving it for \( k + 1 < d \).
Suppose \( S_1 , \dots , S_{ k + 1 } \) are spheres centered in distinct points \( \mathbf{c}_1 , \dots , \mathbf{c}_{ k + 1} \).
By an isometry we may assume that \( K \), the affine span of the \( \mathbf{c}_i \)s, is contained in \( \R^ { d - 1 } \times \set{0} \).
Let \( \bar{S}_k \coloneqq S_k \cap S_{k + 1} \).
We have three cases:
\begin{enumerate}[label={\upshape (\arabic*)}]
\item\label{en:prop:intersectionofspheres-1}
\( \bar{S}_k = \emptyset \),
\item\label{en:prop:intersectionofspheres-2}
\( \bar{S}_k \) is a singleton,
\item\label{en:prop:intersectionofspheres-3}
\( \bar{S}_k \) is a non-degenerate sphere in some hyperplane \( H \) of \( \R^d \) (Lemma~\ref{lem:intersectionoftwospheres}).
\end{enumerate} 
If either~\ref{en:prop:intersectionofspheres-1} or~\ref{en:prop:intersectionofspheres-2} holds, then \( S_1 \cap \cdots \cap S_{k + 1} \) is empty or a singleton, and the result follows trivially.
So we may assume case~\ref{en:prop:intersectionofspheres-3}.
The hyperplane \( H \) is orthogonal to the open segment \( ( \mathbf{c}_k ; \mathbf{c}_{ k + 1} ) \), and \( \bar{\mathbf{c}}_k \), the center of \( \bar{S}_k \), belongs to this segment.
Let \( \pi \colon \R^d \to H \) be the orthogonal projection.
Then \( \pi ( \mathbf{c}_k ) = \pi ( \mathbf{c}_{ k + 1 } ) = \bar{\mathbf{c}}_k \) and for \( i < k \) let \( \bar{\mathbf{c}}_i \coloneqq \pi ( \mathbf{c}_i ) \).
By Lemma~\ref{lem:intersectionspherewithhyperplane} for every \( i < k \) the set \( \bar{S}_i \coloneqq S_i \cap H \) is a sphere of \( H \) with center \( \bar{\mathbf{c}}_i \).
As \( S_k \cap S_{ k + 1} \subseteq H \) 
\[
S_1 \cap \dots \cap S_k \cap S_{ k + 1 } = \bar{S}_1 \cap \dots \cap \bar{S}_k .
\]
By Lemma~\ref{lem:projectiogeneralposition} \( \bar{\mathbf{c}}_1 , \dots , \bar{\mathbf{c}}_k \) are in general position in \( H \cong \R^{ d - 1} \), and since \( d - 1 \geq 2 \), by inductive assumption we can conclude that \( \bar{S}_1 \cap \dots \cap \bar{S}_k = \mathbb{S} ( H' ; \mathbf{c} , r ) \) where \( H' \) a subspace of \( H \) (and hence of \( \R^d \)) of dimension \( ( d - 1 ) - ( k - 1 ) = d - k \), such that \( \setLR{\mathbf{c} } = H' \cap K \), and \( H' \) is orthogonal to \( K \).

\smallskip

\ref{prop:intersectionofspheres-b} 
By assumption \( \mathbf{c}_{ k + 1 } \) belongs to \( K \), and \( k - 1 = \dim ( K ) \).
By part~\ref{prop:intersectionofspheres-a} there is a subspace \( H \) orthogonal to \( K \), of dimension \( d - ( k - 1 ) \), and \( r \in \R \) such that \( S_1 \cap \dots \cap S_k = \mathbb{S} ( H ; \mathbf{c} , r ) \) where \( \mathbf{c} \) is the unique element of \( H \cap K \).
It is enough to show that 
\[
\EXISTS{ R \in \R } \bigl ( \mathbb{S} ( H ; \mathbf{c} , r ) \subseteq S_{ k + 1 } \coloneqq \mathbb{S} ( E ; \mathbf{c}_{ k + 1 } , R ) \bigr ) .
\]
This is immediate if \( r \leq 0 \), so we may assume that \( r > 0 \), that is to say \( \mathbb{S} ( H ; \mathbf{c} , r ) \) is non-degenerate.
If \( \mathbf{c} = \mathbf{c}_{ k + 1 } \) then taking \( R = r \) the result follows at once, so we may assume that \( \mathbf{c} \neq \mathbf{c}_{ k + 1 } \).
Let \( R = \sqrt{ \| \mathbf{c}_{ k + 1 } - \mathbf{c} \|^2 + r^2 } \).
Given \( \mathbf{x} \in \mathbb{S} ( H ; \mathbf{c} , r ) \), we must show that
\[
\| \mathbf{x} - \mathbf{c}_{ k + 1 } \|^2 = \| \mathbf{x} - \mathbf{c} \|^2 + \| \mathbf{c}_{ k + 1 } - \mathbf{c} \|^2
\] 
so that \( \mathbf{x} \in \mathbb{S} ( E ; \mathbf{c}_{ k + 1 } , R ) \).
But this is immediate, since \( H , K \) are orthogonal and hence \( \mathbf{c}_{ k + 1 } - \mathbf{c} \) and \( \mathbf{x} - \mathbf{c} \) are orthogonal.
\end{proof}

\begin{theorem}\label{th:intersectionspheres}
Let \( E \) be an affine space of dimension \( d \geq 2 \) and let \( \mathbf{c}_1 , \dots , \mathbf{c}_d \) be distinct points in general position in \( E \).
\begin{enumerate}[label={\upshape (\alph*)}]
\item\label{th:intersectionspheres-a}
For all spheres \( S_i \) in \( E \) centered in \( \mathbf{c}_i \) with \( 1 \leq i \leq d \), the set \( S_1 \cap \cdots \cap S_d \) is finite.
\item\label{th:intersectionspheres-b}
For all \( k < d \) and \( 1 \leq i \leq k \) there are spheres \( S_i \) in \( E \) centered in \( \mathbf{c}_i \) such that \( S_1 \cap \cdots \cap S_k \) is a non-degenerate sphere in a subspace of dimension \( d - ( k - 1 ) \geq 2 \).
\end{enumerate}
\end{theorem}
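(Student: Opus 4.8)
The plan is to derive both parts from Proposition~\ref{prop:intersectionofspheres}\ref{prop:intersectionofspheres-a}, which already describes the intersection of $j\le d-1$ spheres with centers in general position as a single (possibly degenerate) sphere $\mathbb{S}(H;\mathbf{c},r)$ lying in an affine subspace $H$ of dimension $d-(j-1)$ orthogonal to the affine span $K$ of the centers and meeting $K$ in the single point $\mathbf{c}$. We may assume $E=\R^d$. For part~\ref{th:intersectionspheres-a} I would first apply this to the \emph{first} $d-1$ centers (which remain in general position by Remark~\ref{rmks:well-placed}\ref{rmks:well-placed-a}), i.e.\ with $j=d-1$, obtaining $S_1\cap\dots\cap S_{d-1}=\mathbb{S}(H;\mathbf{c},r)$ where $\dim H = d-(d-2)=2$. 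Thus the partial intersection is a circle (or a point, or empty) inside a plane $H$; if $r\le 0$ the full intersection is already finite, so the interesting case is $r>0$.

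It then remains to intersect this circle with $S_d$. Since $\mathbb{S}(H;\mathbf{c},r)\subseteq H$, we have $S_1\cap\dots\cap S_d=\mathbb{S}(H;\mathbf{c},r)\cap(S_d\cap H)$, and by the same Pythagorean computation as in Lemma~\ref{lem:intersectionspherewithhyperplane}, the set $S_d\cap H$ is a circle in the plane $H$ centered at the orthogonal projection $\bar{\mathbf{c}}_d$ of $\mathbf{c}_d$ onto $H$ (possibly degenerate or empty). Two circles in a plane with distinct centers meet in at most two points, so everything reduces to checking $\mathbf{c}\ne\bar{\mathbf{c}}_d$. This is the one place where general position is used essentially: $\bar{\mathbf{c}}_d=\mathbf{c}$ would force $\mathbf{c}_d-\mathbf{c}$ into the direction of $K$ (because $H$ is orthogonal to $K$ and $\mathbf{c}\in K$), hence $\mathbf{c}_d\in K$; but then all $d$ centers would lie in $K$, whose dimension is only $d-2$, contradicting that $\mathbf{c}_1,\dots,\mathbf{c}_d$ span a $(d-1)$-dimensional subspace. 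Therefore $\mathbf{c}\ne\bar{\mathbf{c}}_d$ and $S_1\cap\dots\cap S_d$ has at most two points.

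For part~\ref{th:intersectionspheres-b} I would build the spheres explicitly. Let $K$ be the affine span of $\mathbf{c}_1,\dots,\mathbf{c}_k$ (dimension $k-1$), with direction space $V_K$; pick any $\mathbf{c}\in K$, set $H=\mathbf{c}+V_K^{\perp}$ where $V_K^{\perp}$ is the orthogonal complement of $V_K$, and fix $r>0$. Note $\dim H = d-(k-1)\ge 2$ because $k<d$. For $\mathbf{x}\in\mathbb{S}(H;\mathbf{c},r)$ the vectors $\mathbf{x}-\mathbf{c}\in V_K^{\perp}$ and $\mathbf{c}-\mathbf{c}_i\in V_K$ are orthogonal, so by Pythagoras $\|\mathbf{x}-\mathbf{c}_i\|^2=r^2+\|\mathbf{c}-\mathbf{c}_i\|^2$, a constant independent of $\mathbf{x}$. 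Hence, putting $R_i:=\sqrt{r^2+\|\mathbf{c}-\mathbf{c}_i\|^2}$ and $S_i:=\mathbb{S}(E;\mathbf{c}_i,R_i)$, every point of the prescribed non-degenerate sphere lies on each $S_i$, giving $\mathbb{S}(H;\mathbf{c},r)\subseteq S_1\cap\dots\cap S_k$.

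The hard part is the reverse inclusion: showing this containment is an equality, so that the intersection is \emph{exactly} the non-degenerate sphere of the claimed dimension rather than something larger. Here Proposition~\ref{prop:intersectionofspheres}\ref{prop:intersectionofspheres-a} does the work, guaranteeing $S_1\cap\dots\cap S_k=\mathbb{S}(H';\mathbf{c}',r')$ for some $H'$ of dimension $d-(k-1)$ orthogonal to $K$ with $\{\mathbf{c}'\}=H'\cap K$. Both $H$ and $H'$ are translates of the same subspace $V_K^{\perp}$, hence parallel; since $r>0$ the containment exhibits a common point of $H$ and $H'$, forcing $H=H'$ and then $\mathbf{c}=H\cap K=H'\cap K=\mathbf{c}'$, after which concentricity forces $r=r'$. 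Thus $S_1\cap\dots\cap S_k=\mathbb{S}(H;\mathbf{c},r)$ is a non-degenerate sphere in a subspace of dimension $d-(k-1)\ge 2$, as required.
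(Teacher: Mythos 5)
Your proof is correct, but it takes a genuinely different route from the paper's, especially in part~\ref{th:intersectionspheres-b}. For part~\ref{th:intersectionspheres-a} the paper simply invokes Proposition~\ref{prop:intersectionofspheres}\ref{prop:intersectionofspheres-a} with \( k = d \): the intersection of all \( d \) spheres is a sphere in a subspace of dimension \( d - ( d - 1 ) = 1 \), hence has at most two points. Your variant---stopping at \( k = d - 1 \) and intersecting the resulting circle with \( S_d \) by hand---is equally valid, and has the incidental merit of using the Proposition only in the range \( k \leq d - 1 \) that its inductive proof literally covers; your verification that \( \mathbf{c} \neq \bar{\mathbf{c}}_d \) is precisely where general position enters. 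For part~\ref{th:intersectionspheres-b} the paper runs a second induction on \( k \), strengthening the statement so that \( r_k \) may be prescribed: at each step it intersects the last two spheres via Lemma~\ref{lem:intersectionoftwospheres}, projects the remaining centers onto the resulting hyperplane (Lemma~\ref{lem:projectiogeneralposition}), applies the inductive hypothesis there, and lifts the radii back by Pythagoras. You instead prescribe the target sphere \( \mathbb{S} ( H ; \mathbf{c} , r ) \) outright, read off the radii \( R_i = \sqrt{ r^2 + \| \mathbf{c} - \mathbf{c}_i \|^2 } \) to get one inclusion, and extract the reverse inclusion from the structure guaranteed by Proposition~\ref{prop:intersectionofspheres}\ref{prop:intersectionofspheres-a} (two translates of \( V_K^{\perp} \) sharing a point coincide, the intersection with \( K \) is a singleton, and concentric nested spheres have equal radii). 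This is shorter, avoids a second induction, and actually yields slightly more: the intersection can be made equal to \emph{any} prescribed non-degenerate sphere centered on \( K \) and orthogonal to it, not merely one with a prescribed last radius. The trade-off is that your argument leans on the full strength of the Proposition (orthogonality of \( H \) to \( K \) and uniqueness of \( H \cap K \)), whereas the paper's induction is self-contained modulo the two lemmas.
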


\begin{proof}
Recall that a non-degenerate sphere in space of dimension \( n \) has cardinality \( 2^{\aleph_0} \) or \( 2 \) depending whether \( n > 1 \) or \( n = 1 \).
Part~\ref{th:intersectionspheres-a} then follows at once form Proposition~\ref{prop:intersectionofspheres}.

\ref{th:intersectionspheres-b}
We prove by induction on \( k \) a stronger statement: for any \( r_k > 0 \) and for any affine space \( E \) of dimension \( d > k \), there exist \( r_1 , \dots , r_{k- 1} > 0 \) such that 
\[
\mathbb{S} ( E ; \mathbf{c}_1 , r_1 ) \cap \dots \cap \mathbb{S} ( E ; \mathbf{c}_k , r_k ) = \mathbb{S} ( H ; \mathbf{c} , r ) ,
\] 
for some \( r > 0 \), \( \mathbf{c} \in H \) and \( H \) a subspace of \( E \) of dimension \( d - ( k - 1 ) \).

The base case \( k = 1 \) is immediate, so we may assume that the result holds for some \( k \) towards proving it for \( k + 1 < d \).
Fix \( r_{ k + 1 } > 0 \) and pick \( r_k > 0 \) such that \( \card{ r_k - r_{ k + 1 } } < \| \mathbf{c}_k - \mathbf{c}_{ k + 1 } \| < r_k + r_{ k + 1 } \).
By Lemma~\ref{lem:intersectionoftwospheres} \( \mathbb{S} ( E ; \mathbf{c}_{ k + 1 } , r _{ k + 1 } ) \cap \mathbb{S} ( E ; \mathbf{c}_k , r _k ) \) is a non-degenerate sphere \( \mathbb{S} ( \bar{E} ; \bar{\mathbf{c}}_{ k } , \bar{r} _{ k } ) \) in some hyperplane \( \bar{E} \) of \( E \).
Let \( \pi \colon E \to \bar{E} \) be the orthogonal projection. 
By Lemma~\ref{lem:projectiogeneralposition} the points
\begin{align*}
 \bar{\mathbf{c}}_1 & = \pi ( \mathbf{c}_1 ) , & \bar{\mathbf{c}}_2 & = \pi ( \mathbf{c}_2 ) , & &\dots & \bar{\mathbf{c}}_{k - 1} & = \pi ( \mathbf{c}_{ k - 1} ) , & \bar{\mathbf{c}}_k & = \pi ( \mathbf{c}_k ) = \pi ( \mathbf{c}_{ k + 1 } )
\end{align*}
are distinct and in general position in the affine space \( \bar{E} \) of dimension \( d - 1 > k \).
Since \( \bar{r} _{ k } > 0 \), by inductive assumption there exist \( \bar{r}_1 , \dots , \bar{r}_{ k - 1} > 0 \) such that 
\[
 S \coloneqq \mathbb{S} ( \bar{E} , \bar{\mathbf{c}}_1 , \bar{r}_1 ) \cap \dots \cap \mathbb{S} ( \bar{E} , \bar{\mathbf{c}}_k , \bar{r}_k ) 
\]
is a non-degenerate sphere in a subspace \( H \) of dimension \( ( d - 1 ) - ( k - 1 ) = d - k \).
Letting \( r_i = \sqrt{ \bar{r}_i^2 + h_i^2 } \) where \( h_i \) is the distance between \( \mathbf{c}_i \) and the hyperplane \( \bar{E} \), we have that
\[
 S = \mathbb{S} (E , \mathbf{c}_1 , r_1 ) \cap \dots \cap \mathbb{S} ( E , \mathbf{c}_k , r_k ) \cap \mathbb{S} ( E , \mathbf{c}_{k + 1} , r_{k + 1} )
\]
as required.
\end{proof}

\begin{corollary}\label{cor:intersectionspheres-1}
Suppose \( E \) is an affine space of dimension \( d \geq 2 \) and \( \mathbf{c}_1 , \dots , \mathbf{c}_{ d + 1} \) are distinct points that are not in general position in \( E \).
Then there are spheres \( S_i \) centered in \( \mathbf{c}_i \) such that \( S_1 \cap \dots \cap S_{ d + 1} \) is infinite.
\end{corollary}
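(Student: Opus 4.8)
The plan is to reduce the intersection of all \( d + 1 \) spheres to the intersection of a small subfamily to which Theorem~\ref{th:intersectionspheres} applies, and then to force each of the remaining spheres to contain this intersection by means of Proposition~\ref{prop:intersectionofspheres}\ref{prop:intersectionofspheres-b}. As a first step I would replace \( E \) by \( \R^d \) via an isometry, so that orthogonality and orthogonal projections are available.

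The organizing object is the affine span \( K \) of \( \mathbf{c}_1 , \dots , \mathbf{c}_{d+1} \); set \( m \coloneqq \dim K \). Because the centers are not in general position, \( K \) is a proper subspace, so \( m < d \). After relabelling I may assume that \( \mathbf{c}_1 , \dots , \mathbf{c}_{m+1} \) is a maximal affinely independent subset: these points are in general position in \( \R^d \), they affinely span \( K \), and each remaining center \( \mathbf{c}_{m+2} , \dots , \mathbf{c}_{d+1} \) lies in \( K \). In particular, for every \( j \) with \( m + 2 \leq j \leq d + 1 \), the set \( \{ \mathbf{c}_1 , \dots , \mathbf{c}_{m+1} , \mathbf{c}_j \} \) fails to be in general position, because \( \mathbf{c}_j \) belongs to the affine span of \( \mathbf{c}_1 , \dots , \mathbf{c}_{m+1} \).

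I would then invoke Theorem~\ref{th:intersectionspheres}\ref{th:intersectionspheres-b} for the \( m + 1 \) points in general position \( \mathbf{c}_1 , \dots , \mathbf{c}_{m+1} \), obtaining spheres \( S_1 , \dots , S_{m+1} \) centered at them whose intersection \( I \coloneqq S_1 \cap \dots \cap S_{m+1} \) is a non-degenerate sphere in an affine subspace of dimension \( d - m \). When this dimension is at least \( 2 \) such a sphere has cardinality \( 2^{\aleph_0} \), hence is infinite. For each remaining index \( j \), Proposition~\ref{prop:intersectionofspheres}\ref{prop:intersectionofspheres-b} applied to \( \mathbf{c}_1 , \dots , \mathbf{c}_{m+1} , \mathbf{c}_j \) then produces a sphere \( S_j \) centered at \( \mathbf{c}_j \) with \( I \subseteq S_j \). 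Combining these inclusions gives \( S_1 \cap \dots \cap S_{d+1} = I \), which is infinite.

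The delicate point — and the main obstacle — is the dimension count. Theorem~\ref{th:intersectionspheres}\ref{th:intersectionspheres-b} yields a positive-dimensional, hence infinite, sphere only when \( m + 1 < d \), that is when the affine span \( K \) has dimension \( m \leq d - 2 \). I would therefore have to verify that the failure of general position for these \( d + 1 \) points is strong enough to force \( \dim K \leq d - 2 \), and not merely \( \dim K \leq d - 1 \); this configuration-theoretic fact is exactly the input on which the reduction rests, and is where the hypothesis on the \( \mathbf{c}_i \) must be used in full. Once that bound on \( m \) is in hand, the two-step reduction above closes the argument.
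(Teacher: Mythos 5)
Your strategy is the same as the paper's: pass to a maximal affinely independent subfamily of the centers spanning \( K \), apply Theorem~\ref{th:intersectionspheres}\ref{th:intersectionspheres-b} to make the intersection of the corresponding spheres a non-degenerate sphere of positive dimension, and then absorb the remaining centers one at a time via Proposition~\ref{prop:intersectionofspheres}\ref{prop:intersectionofspheres-b}. The only difference is that you explicitly flag the dimension count as an unresolved obligation, whereas the paper's proof passes over it silently.

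That obligation cannot be discharged: the bound \( \dim K \leq d - 2 \) does \emph{not} follow from the hypothesis. If \( \mathbf{c}_1 , \dots , \mathbf{c}_{ d + 1 } \) are well-placed, i.e.\ they lie in a hyperplane and are in general position there (for \( d = 2 \): three distinct collinear points; for \( d = 3 \): four coplanar points with no three collinear), then they are not in general position in \( E \), yet \( \dim K = d - 1 \). Your reduction would then need Theorem~\ref{th:intersectionspheres}\ref{th:intersectionspheres-b} for \( m + 1 = d \) spheres, which is exactly outside its range (part~\ref{th:intersectionspheres-a} of that theorem says the opposite for \( d \) centers in general position); and indeed no choice of spheres works, since any \( d \) of these centers span a \( ( d - 1 ) \)-dimensional subspace, so by Corollary~\ref{cor:intersectionspheres-2} the intersection of any \( d \) of the spheres is already finite. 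So the statement as written fails for such configurations; it is correct precisely when the affine span of the centers has dimension at most \( d - 2 \), which is the case your two-step reduction actually covers. For what it is worth, the paper's own proof commits the same oversight --- it applies Theorem~\ref{th:intersectionspheres}\ref{th:intersectionspheres-b} to \( k + 1 \) spheres without checking \( k + 1 < d \) --- so your write-up has the merit of isolating exactly the point at which the argument breaks, even though you frame it as a fact to be verified rather than as a counterexample to the claim.
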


\begin{proof}
Let \( k \) be the dimension of the affine span \( K \) of \( \setLR{ \mathbf{c}_1 , \dots , \mathbf{c}_{ d + 1 } } \).
By assumption \( k < d \), and without loss of generality we may assume that \( \mathbf{c}_1 , \dots , \mathbf{c}_{k + 1} \) are in general position in \( K \).
By part~\ref{th:intersectionspheres-b} of Theorem~\ref{th:intersectionspheres} there are spheres \( S_1 , \dots , S_{ k + 1 } \) centered in \( \mathbf{c}_1 , \dots , \mathbf{c}_{ k + 1 } \) whose intersection is a non-degenerate sphere in a space of dimension \( \geq 2 \).
By repeated applications of part~\ref{prop:intersectionofspheres-b} of Proposition~\ref{prop:intersectionofspheres}, there are spheres \( S_{ k + 2 }, \dots , S_{ d + 1 } \) centered in \( \mathbf{c}_{ k + 2 } , \dots , \mathbf{c}_{ d + 1 } \) such that 
\[
 S_1 \cap \dots \cap S_{ k + 1 } \subseteq S_i \quad \text{for } k + 2 \leq i \leq d + 1
\]
and hence \( S_1 \cap \dots \cap S_{ k + 1 } = S_1 \cap \dots \cap S_{ d + 1 } \) is infinite.
\end{proof}

Observe that by~\eqref{eq:properlyplaced}, both ``\( S \) is in general position'' and ``\( S \) is well-placed'' imply the assumption of the next corollary.

\begin{corollary}\label{cor:intersectionspheres-2}
Suppose \( S \) is a set of at least \( d \) points of an affine space \( E \) of dimension \( d \geq 2 \).
Assume that any \( d \)-many points of \( S \) span a subspace of dimension \( d - 1 \).
Then the intersection of \( d \)-many spheres centered in these points is a finite set.
\end{corollary}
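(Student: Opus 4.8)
The plan is to deduce this directly from part~\ref{th:intersectionspheres-a} of Theorem~\ref{th:intersectionspheres}, which already treats \( d \) points in general position. Thus the content of the proof is to show that the apparently weaker hypothesis---that \emph{any} \( d \) points of \( S \) span a subspace of dimension \( d-1 \)---in fact forces any \( d \) points of \( S \) to be in general position in \( E \). Granting this, fix any \( d \) of the given centers; they are distinct (being elements of the set \( S \)) and in general position, so part~\ref{th:intersectionspheres-a} of Theorem~\ref{th:intersectionspheres} applies verbatim and yields that the intersection of \( d \) spheres centered at them is finite.

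To establish this upgrade I would argue by contradiction, unwinding Definition~\ref{def:generalposition}. If some \( d \)-element subset of \( S \) is not in general position in \( E \), then it contains a subset \( T' \) of size \( k+1 \), for some \( k \leq d-1 \), whose affine span has dimension at most \( k-1 \) instead of the expected \( k \). Since \( \card{T'} = k+1 \leq d \) and \( \card{S} \geq d \), the complement \( S \setminus T' \) still has at least \( d-(k+1) \) points, so \( T' \) can be enlarged to a set \( T'' \subseteq S \) of exactly \( d \) points. Adjoining each new point raises the dimension of the affine span by at most one, so the span of \( T'' \) has dimension at most \( (k-1)+\bigl(d-(k+1)\bigr)=d-2<d-1 \). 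This contradicts the hypothesis that every \( d \) points of \( S \) span a subspace of dimension \( d-1 \), and the upgrade follows.

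I do not expect a genuine obstacle here: the argument is a short dimension count, and the only point requiring attention is the cardinality bookkeeping---using \( \card{S} \geq d \) to guarantee that the degenerate subset \( T' \) can indeed be padded up to a \( d \)-element subset of \( S \) (this already works when \( \card{S}=d \), the single \( d \)-subset being \( S \) itself). With the upgrade in hand, the corollary is just an instance of part~\ref{th:intersectionspheres-a} of Theorem~\ref{th:intersectionspheres}.
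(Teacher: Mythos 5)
Your proposal is correct and matches the paper's (implicit) argument: the paper states this corollary without proof, treating it as an immediate consequence of Theorem~\ref{th:intersectionspheres}\ref{th:intersectionspheres-a}, and your dimension-count showing that \( d \) points spanning a \( (d-1) \)-dimensional affine subspace must be in general position is exactly the routine upgrade being left to the reader. (For a single \( d \)-element subset one can argue even more directly that the \( d-1 \) difference vectors span a \( (d-1) \)-dimensional space and are therefore linearly independent, but your padding argument is equivalent.)
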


\subsection{Sprays}\label{subsec:sprays}
J.~Schmerl in~\cite{Schmerl:2003uq} defined a spray to be a set \( X \subseteq \R^2 \) such that \( C \cap X \) is finite, for all circles \( C \) centered in some given point.
Thus a spray \( X \) is a very sparse subset of the plane, and one may investigate what happens if one relaxes the finiteness condition on \( C \cap X \) with, say, being countable.

\begin{definition}\label{def:spray}
Let \( E \) be an affine subspace of \( \R^d \), with \( d \geq 2 \) be a natural number.
An \markdef{\( \aleph_ \delta \)-spray in \( E \) with center \( \mathbf{c} \)} is a set \( X \subseteq E \) such that \( \card{ \mathbb{S} ( E ; \mathbf{c} , r ) \cap X } < \aleph_ \delta \) for every \( r \in \R \).
When \( \delta = 0 \) it is called a \markdef{spray}, when \( \delta = 1 \) we speak of \markdef{\( \sigma \)-spray}.
\end{definition}

Note that a center need not to belong to the \( \aleph_ \delta \)-spray, and that a \( \aleph_ \delta \)-spray may have more than one center.

By Example~\ref{xmp:well-placed}, if \( \mathbf{c}_1 , \dots , \mathbf{c}_{ ( n + 1 ) ( d - 1 ) + 1 } \) are either in general position, or well-placed in \( \R^d \), then 
\begin{itemize}
\item
\( 2^{\aleph_0} \leq \aleph_n \) implies that there are \( X_1 , \dots , X_{ ( n + 1 ) ( d - 1 ) + 1 } \) covering \( \R^d \) such that each \( X_i \) is a spray centered in \( \mathbf{c}_i \);
\item
\( 2^{\aleph_0} \leq \aleph_{ n + 1 } \) implies that there are \( X_1 , \dots , X_{ ( n + 1 ) ( d - 1 ) + 1 } \) covering \( \R^d \) such that each \( X_i \) is a \( \sigma \)-spray.
\end{itemize}

In particular if the points are well-placed, that is if we assume that the \( \mathbf{c}_i \)s lie on a hyperplane, then:

\begin{theorem}\label{th:boundoncontinuum=>R^dcoveredwithkappasprays}
Let \( d \geq 2 \), \( n \geq 0\), and \( \delta = 0 , 1 \).
Assume that \( \mathbf{c}_1 , \dots , \mathbf{c}_{ ( n + 1 ) (d - 1 ) + 1 } \) are well-placed points in \( \R^d \).
If \( 2^{\aleph_0 } \leq \aleph_{ \delta + n } \) then there are \( X_1 , \dots , X_{ ( n + 1 ) ( d - 1 ) + 1 } \) covering \( \R^d \) such that each \( X_i \) is an \( \aleph_ \delta \)-spray centered in \( \mathbf{c}_i \).
\end{theorem}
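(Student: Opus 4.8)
The plan is to obtain the statement as an immediate consequence of Example~\ref{xmp:well-placed} combined with the forward direction \ref{th:EJM2-a}\( \IMPLIES \)\ref{th:EJM2-b} of Theorem~\ref{th:EJM2}, taking the mesh parameter \( r \) of that theorem to be \( d \). Write \( N = ( n + 1 ) ( d - 1 ) + 1 \) and let \( \mathcal{H}_i = \setof{ \mathbb{S} ( \R^d ; \mathbf{c}_i , r ) }{ r \in \R } \) be the family of all spheres of \( \R^d \) centered at \( \mathbf{c}_i \). The aim is to produce the sprays as the sets \( A_1 , \dots , A_N \) delivered by Theorem~\ref{th:EJM2} for this family.

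First I would record that the well-placed hypothesis supplies exactly the geometric input that Example~\ref{xmp:well-placed} requires. Indeed, by~\eqref{eq:properlyplaced} applied with \( k = d - 1 < d \), being well-placed implies that the affine span of any \( d \) of the \( \mathbf{c}_i \) has dimension \( d - 1 \); applying the same implication with \( k = 1 \) shows in passing that the \( \mathbf{c}_i \) are pairwise distinct. Hence the points \( \mathbf{c}_1 , \dots , \mathbf{c}_N \) satisfy the assumption of Example~\ref{xmp:well-placed}, and so \( ( \mathcal{H}_i )_{ i = 1 }^N \) is locally finite and of mesh \( d \).

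Next I would match the indices and invoke the theorem. Since \( N = ( n + 1 ) ( d - 1 ) + 1 = ( n + 1 ) ( r - 1 ) + 1 \) for \( r = d \), the sequence \( ( \mathcal{H}_i )_{ i = 1 }^N \) is precisely of the shape demanded by clause~\ref{th:EJM2-b}. The hypothesis \( 2^{\aleph_0} \leq \aleph_{ \delta + n } \) is clause~\ref{th:EJM2-a}, so the implication \ref{th:EJM2-a}\( \IMPLIES \)\ref{th:EJM2-b} yields \( A_1 , \dots , A_N \) covering \( \R^d \) with \( \card{ H \cap A_i } < \aleph_ \delta \) for every \( i \) and every \( H \in \mathcal{H}_i \). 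Setting \( X_i = A_i \), the condition \( \card{ \mathbb{S} ( \R^d ; \mathbf{c}_i , r ) \cap X_i } < \aleph_ \delta \) then holds for every \( r \in \R \) because \( \mathbb{S} ( \R^d ; \mathbf{c}_i , r ) \in \mathcal{H}_i \); that is, each \( X_i \) is an \( \aleph_ \delta \)-spray centered in \( \mathbf{c}_i \), and the \( X_i \) cover \( \R^d \), as required.

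There is essentially no obstacle beyond these verifications: the genuinely geometric content—that \( d \) spheres with well-placed centers meet in a finite set, so that the family has mesh \( d \)—is exactly Example~\ref{xmp:well-placed}, which in turn rests on Theorem~\ref{th:intersectionspheres}. The only bookkeeping point worth flagging is that the \( \mathcal{H}_i \) must be pairwise disjoint for the notion of mesh to be meaningful; this is harmless once degenerate and empty spheres are set aside, since distinct centers give distinct non-degenerate spheres, and it is already subsumed in the statement of Example~\ref{xmp:well-placed}.
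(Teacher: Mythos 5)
Your proof is correct and is essentially the paper's own argument: the theorem is stated there as an immediate consequence of Example~\ref{xmp:well-placed} (spheres about well-placed centers form a locally finite family of mesh \( d \)) combined with the direction \ref{th:EJM2-a}\( \IMPLIES \)\ref{th:EJM2-b} of Theorem~\ref{th:EJM2} with \( r = d \). Your extra checks---that \eqref{eq:properlyplaced} supplies the hypothesis of the Example and that degenerate spheres cause no disjointness issue---are exactly the right bookkeeping.
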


By~\eqref{eq:properlyplaced} and Corollary~\ref{cor:intersectionspheres-2}, the thesis of Theorem~\ref{th:boundoncontinuum=>R^dcoveredwithkappasprays} holds if \( \mathbf{c}_1 , \dots , \mathbf{c}_{ ( n + 1 ) (d - 1 ) + 1 }\) are in general position in \( \R^d \).
The reason for focusing on well-placed points is that under this assumption the implication in Theorem~\ref{th:boundoncontinuum=>R^dcoveredwithkappasprays} can be reversed, and this is the main goal of this paper.
In Section~\ref{sec:Transforming-sprays-into-linear-objects} we argue that any covering of \( \R^d \) with sprays (\( \sigma \)-sprays) with well-placed centers \( \mathbf{c}_i \)s can be transformed into a covering (of an open subset) of \( \R^d \) with sets whose intersection with the hyperplanes orthogonal to \( \mathbf{u}_i \) is finite (countable) for all \( i \), where the vector \( \mathbf{u}_i \) is obtained from the point \( \mathbf{c}_i \).
Appealing to the results from Section~\ref{sec:Hyperplane-sections} the equivalence will be established.

The case of points in general position, but not well-placed, i.e. not belonging to an hyperplane, is more problematic.
If \( d = 2 \) and three points are not collinear (i.e. are in general position, but not well-placed in \( \R^2 \)) then by~\cite[Theorem 1]{Schmerl:2010nr}, the plane is the union of three sprays centered in these points, irrespective of the size of the continuum.
The analogous statement for \( d > 2 \) is open, and if true it would show that the assumption that the points are well-placed cannot be dropped.
\begin{conjecture}
For all \( d \geq 3 \), for all \( \mathbf{c}_1 , \dots , \mathbf{c}_{d + 1 } \) in general position in \( \R^d \), there are \( X_1 , \dots , X_{ d + 1 } \) covering \( \R^d \), with \( X_i \) a spray centered in \( \mathbf{c}_i \) for \( 1 \leq i \leq d + 1 \).
\end{conjecture}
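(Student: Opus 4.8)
The plan is to prove the conjecture by induction on \( d \), taking as base case \( d = 2 \) the \( \ZFC \) theorem of Schmerl that three sprays with non-collinear centers cover the plane \cite{Schmerl:2010nr}. For the inductive step, given \( \mathbf{c}_1 , \dots , \mathbf{c}_{ d + 1 } \) in general position in \( \R^d \), I would single out the last two centers and foliate \( \R^d \) by the hyperplanes \( H_t \) orthogonal to \( \mathbf{c}_d - \mathbf{c}_{ d + 1 } \). By Lemma~\ref{lem:intersectionoftwospheres} every set \( \mathbb{S} ( \mathbf{c}_d , r_d ) \cap \mathbb{S} ( \mathbf{c}_{ d + 1 } , r_{ d + 1 } ) \) lives on such a hyperplane, and by Lemmas~\ref{lem:intersectionspherewithhyperplane} and~\ref{lem:projectiogeneralposition} the orthogonal projections \( \bar{\mathbf{c}}_1 , \dots , \bar{\mathbf{c}}_{ d - 1 } \) of \( \mathbf{c}_1 , \dots , \mathbf{c}_{ d - 1 } \), together with the common projection \( \bar{\mathbf{c}}_d = \pi ( \mathbf{c}_d ) = \pi ( \mathbf{c}_{ d + 1 } ) \), are \( d \) points in general position in each leaf \( H_t \cong \R^{ d - 1 } \). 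Each leaf therefore instantiates the conjecture in dimension \( d - 1 \): it is covered by \( d \) sprays \( Z_1 , \dots , Z_d \) centered at \( \bar{\mathbf{c}}_1 , \dots , \bar{\mathbf{c}}_d \). The aim is to lift \( Z_1 , \dots , Z_{ d - 1 } \) to sprays \( X_1 , \dots , X_{ d - 1 } \) centered at \( \mathbf{c}_1 , \dots , \mathbf{c}_{ d - 1 } \), and to realize the last leafwise spray \( Z_d \), centered at \( \bar{\mathbf{c}}_d \), as the trace of two genuine sprays \( X_d , X_{ d + 1 } \) centered at \( \mathbf{c}_d \) and \( \mathbf{c}_{ d + 1 } \), using that a point at a given \( \bar{\mathbf{c}}_d \)-radius in \( H_t \) lies on an intersection \( \mathbb{S} ( \mathbf{c}_d , r_d ) \cap \mathbb{S} ( \mathbf{c}_{ d + 1 } , r_{ d + 1 } ) \).

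A useful way to expose the underlying structure, which I would develop in parallel, is the paraboloid lift \( \mathbf{x} \mapsto ( \mathbf{x} , \| \mathbf{x} \|^2 ) \), a bijection of \( \R^d \) onto \( P = \setof{ ( \mathbf{x} , \| \mathbf{x} \|^2 ) }{ \mathbf{x} \in \R^d } \subseteq \R^{ d + 1 } \). A sphere \( \mathbb{S} ( \mathbf{c}_i , r ) \) lifts to the section of \( P \) by the hyperplane orthogonal to \( \boldsymbol{\nu}_i \coloneqq ( -2 \mathbf{c}_i , 1 ) \), so covering \( \R^d \) with sprays \( X_i \) is equivalent to covering \( P \) with sets \( Y_i \) meeting every hyperplane orthogonal to \( \boldsymbol{\nu}_i \) in a finite set. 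General position of the \( \mathbf{c}_i \) is exactly the statement that \( \boldsymbol{\nu}_1 , \dots , \boldsymbol{\nu}_{ d + 1 } \) is a basis of \( \R^{ d + 1 } \), and, since any \( d \) of the centers span a hyperplane, Theorem~\ref{th:intersectionspheres}\ref{th:intersectionspheres-a} shows the sections have mesh \( d \) on \( P \). This pinpoints the difficulty: a covering of all of \( \R^{ d + 1 } \) with \( d + 1 \) such sets would contradict Sierpiński and Theorem~\ref{th:EJM2}, and even the mesh-\( d \) count on \( P \) yields, through Theorem~\ref{th:EJM2}, only \( \CH \) rather than \( \ZFC \). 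The desired \( \ZFC \) strength must come from finer geometry of spanning configurations, exactly as in the passage from the collinear case (where mesh counting gives \( \CH \)) to Schmerl's non-collinear \( d = 2 \) theorem (which is \( \ZFC \)).

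The main obstacle is that the leafwise induction cannot be carried out leaf by leaf independently. A single sphere \( \mathbb{S} ( \mathbf{c}_i , r ) \) crosses every leaf \( H_t \), meeting it in a \( ( d - 2 ) \)-sphere, so \( \mathbb{S} ( \mathbf{c}_i , r ) \cap X_i \) collects one leafwise-finite contribution from each of continuum-many \( t \); finiteness of the total is a global coherence requirement that independently chosen leaf solutions need not satisfy. Resolving it requires coordinating the decompositions across leaves so that, for each fixed sphere, all but finitely many leaves contribute nothing — and since there is no slicing in the base case \( d = 2 \), Schmerl's construction offers no template for this coordination. I expect this cross-leaf coordination, equivalently the search for a direct \( d \)-dimensional coloring that exploits the mesh-\( d \) structure of spanning configurations in the way the two-point intersection of circles is exploited when \( d = 2 \), to be the decisive difficulty, and it is presumably why the conjecture remains open.
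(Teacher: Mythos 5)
There is a genuine gap here --- indeed the statement you are asked to prove is labelled a \emph{Conjecture} in the paper, which explicitly leaves it open (it is the question ``can \( \R^3 \) be covered with four sprays whose centers form a tetrahedron?'' from the introduction), and your proposal does not close it: it is a research plan whose inductive step is never carried out, and you say so yourself in the final paragraph. Concretely, the step that fails is the lifting of the leafwise coverings. For \( i \leq d - 1 \) a single sphere \( \mathbb{S} ( \mathbf{c}_i , r ) \) meets continuum-many leaves \( H_t \), each in a \( ( d - 2 ) \)-sphere, so the requirement that \( \mathbb{S} ( \mathbf{c}_i , r ) \cap X_i \) be finite forces all but finitely many leaves to contribute nothing to \( X_i \) at that radius; independently chosen leaf solutions have no reason to satisfy this, and no coordination mechanism is proposed. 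You correctly diagnose this obstruction, but diagnosing an obstruction is not overcoming it.

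A second, structural reason the leafwise scheme cannot work as stated: projecting along \( \mathbf{c}_d - \mathbf{c}_{ d + 1 } \) identifies those two centers, so on each leaf you have only \( d \) centers, and they are \emph{well-placed} (they lie in general position in the hyperplane \( H_t \cong \R^{ d - 1 } \) viewed inside the leaf) --- but the paper proves (Theorem~\ref{th:Rdnotunionofdsprays}, and Theorem~\ref{th:no4spraysinR3} for \( d = 3 \)) that \( d \)-many sprays, and more generally sprays with well-placed centers in the critical number, cannot cover the space in \( \ZFC \); the leafwise configuration is exactly the degenerate one that the paper shows is equivalent to bounds on \( 2^{\aleph_0} \). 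So the entire \( \ZFC \) content of the conjecture must come from recovering the ``lost'' \( ( d + 1 ) \)-st center out of the transverse direction, i.e.\ from realizing \( Z_d \) as the trace of two genuine sprays \( X_d , X_{ d + 1 } \), and this is precisely the step for which you offer no construction. Your paraboloid-lift reformulation is essentially the paper's map \( \Phi \) of Section~\ref{sec:Transforming-sprays-into-linear-objects} in different coordinates and is a sound way to see why mesh-counting via Theorem~\ref{th:EJM2} only yields \( \CH \)-conditional results; but that again only explains why the easy arguments fail. The conjecture remains unproved by your proposal, as it does in the paper.
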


\subsection{A few elementary results about sprays} 
Observe the following very basic, but useful facts.
Let \( \kappa \) be an infinite cardinal.

\begin{description}
\item[Glueing]
If \( X \) and \( Y \) are \( \aleph_ \delta \)-sprays in an affine space \( E \) with the same center \( \mathbf{c} \), then \( X \cup Y \) is a \( \aleph_ \delta \)-spray in \( E \) with center \( \mathbf{c} \).
\item[Projection]
Suppose that \( X_1 , \dots , X_n \) are \( \aleph_ \delta \)-sprays in an affine space \( E \) of dimension \( d + 1 \), and \( X_1 \cup \dots \cup X_n = E \) with centers \( \mathbf{c}_1 , \dots , \mathbf{c}_n \).
Suppose \( H \) is an hyperplane of \( E \), and let \( \pi \colon E \to H \) be the orthogonal projection.
Then \( X_1 \cap H , \dots , X_n \cap H \) are \( \aleph_ \delta \)-sprays in \( H \) with centers \( \pi ( \mathbf{c}_1 ) , \dots , \pi ( \mathbf{c}_n ) \in H \), that cover \( H \) .
\end{description}

Projection follows from Lemma~\ref{lem:intersectionspherewithhyperplane}.
Observe that the points \( \pi ( \mathbf{c}_1 ) , \dots , \pi ( \mathbf{c}_n ) \) need not be distinct, even if the \( \mathbf{c}_1 , \dots , \mathbf{c}_n \) are distinct.
This, together with glueing, allows to transform \( n \)-many \( \aleph_ \delta \)-sprays covering \( \R^{ d + 1 } \) into \( n' \)-many \( \aleph_ \delta \)-sprays covering \( \R^d \), with \( n' < n \).

\begin{proposition}\label{prop:R2notunionof2sprays}
Suppose \( \R^2 = X_1 \cup X_2 \) where \( X_1 \) is an \( \aleph_ \delta \)-spray and \( X_2 \) is an \( \aleph_ { \delta + 1 } \)-spray.
Then \( 2^{\aleph_0} \leq \aleph_ \delta \).
\end{proposition}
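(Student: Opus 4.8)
The plan is to adapt Sierpiński's classical argument (the backward direction of Theorem~\ref{th:Sierpinski}) to the nonlinear setting, with circles about the two centers playing the role of the vertical and horizontal lines. Let $\mathbf{c}_1$ be a center of $X_1$ and $\mathbf{c}_2$ a center of $X_2$. First I would dispose of the degenerate case $\mathbf{c}_1 = \mathbf{c}_2 =: \mathbf{c}$: any circle $C = \mathbb{S}(\mathbf{c}, r)$ of positive radius satisfies $C = (C \cap X_1) \cup (C \cap X_2)$, and since $|C \cap X_1| < \aleph_\delta$ and $|C \cap X_2| < \aleph_{\delta+1}$ we get $2^{\aleph_0} = |C| \le \aleph_\delta$ at once. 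So from now on assume $\mathbf{c}_1 \ne \mathbf{c}_2$, put $d = \|\mathbf{c}_1 - \mathbf{c}_2\| > 0$, and argue by contradiction, assuming $2^{\aleph_0} > \aleph_\delta$.

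The geometric heart of the argument is to fix a single interval $I \subseteq \R_{>0}$ of radii such that for all $a, b \in I$ the circles $\mathbb{S}(\mathbf{c}_1, a)$ and $\mathbb{S}(\mathbf{c}_2, b)$ meet; for instance $I = [\tfrac54 d, \tfrac74 d]$ works, since then $|a - b| \le \tfrac12 d < d < a + b$, so the two circles intersect transversally in two points. Now I would choose $B \subseteq I$ with $|B| = \aleph_\delta$ (possible as $|I| = 2^{\aleph_0} > \aleph_\delta$), and for each $b \in B$ record the $\mathbf{c}_1$-radii of the points where $X_2$ meets $\mathbb{S}(\mathbf{c}_2, b)$: set $Q_b = \{ \|\mathbf{p} - \mathbf{c}_1\| : \mathbf{p} \in X_2 \cap \mathbb{S}(\mathbf{c}_2, b) \}$. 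Since $X_2$ is an $\aleph_{\delta+1}$-spray, $|X_2 \cap \mathbb{S}(\mathbf{c}_2, b)| < \aleph_{\delta+1}$, so $|Q_b| \le \aleph_\delta$, and hence $Q = \bigcup_{b \in B} Q_b$ has cardinality $\le \aleph_\delta \cdot \aleph_\delta = \aleph_\delta < 2^{\aleph_0} = |I|$.

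Finally I would pick $a \in I \setminus Q$ and examine the single circle $\mathbb{S}(\mathbf{c}_1, a)$. For each $b \in B$ it meets $\mathbb{S}(\mathbf{c}_2, b)$, so choose $\mathbf{p}_b \in \mathbb{S}(\mathbf{c}_1, a) \cap \mathbb{S}(\mathbf{c}_2, b)$. Then $\|\mathbf{p}_b - \mathbf{c}_1\| = a \notin Q_b$, so $\mathbf{p}_b \notin X_2$, whence $\mathbf{p}_b \in X_1$ because $X_1 \cup X_2 = \R^2$; and the $\mathbf{p}_b$ are pairwise distinct since $\|\mathbf{p}_b - \mathbf{c}_2\| = b$. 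Thus $\{\mathbf{p}_b : b \in B\}$ is a set of $\aleph_\delta$ distinct points of $X_1 \cap \mathbb{S}(\mathbf{c}_1, a)$, contradicting that $X_1$ is an $\aleph_\delta$-spray centered at $\mathbf{c}_1$; this contradiction yields $2^{\aleph_0} \le \aleph_\delta$. The step I expect to require the most care is the geometric one, namely pinning down an interval $I$ of radii on which every circle about $\mathbf{c}_1$ genuinely meets every circle about $\mathbf{c}_2$, since the remainder is the familiar counting-and-avoidance pattern; note that the asymmetry between $\aleph_\delta$ and $\aleph_{\delta+1}$ is precisely what pushes the bound $|Q| \le \aleph_\delta$ strictly below the continuum, which is exactly what allows the radius $a$ to be chosen.
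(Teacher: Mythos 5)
Your proof is correct and follows essentially the same route as the paper's: handle the coincident-centers case by glueing, then (assuming \( 2^{\aleph_0} > \aleph_\delta \)) fix \( \aleph_\delta \)-many circles about \( \mathbf{c}_2 \), discard the \( \leq \aleph_\delta \)-many ``bad'' radii about \( \mathbf{c}_1 \) coming from \( X_2 \), and find a circle about \( \mathbf{c}_1 \) forced to contain \( \aleph_\delta \)-many points of \( X_1 \). The only cosmetic differences are your choice of the interval of radii (\( [\tfrac54 d, \tfrac74 d] \) versus the paper's \( (d/2; d) \)) and your bookkeeping of bad radii directly as the set \( Q \) rather than as the set of circles meeting \( \bigcup_b (X_2 \cap \mathbb{S}(\mathbf{c}_2,b)) \).
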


\begin{proof}
For \( i = 1 , 2 \), let \( \mathbf{c}_i \) be the center of \( X_i \), and for \( r > 0 \) let
\[
 C_i ( r ) \coloneqq \setof{ \mathbf{p} \in \R^2 }{ \| \mathbf{p} - \mathbf{c}_i \| = r }
\] 
be the circle centered in \( \mathbf{c}_i \) of radius \( r \).
If \( \mathbf{c}_1 = \mathbf{c}_2 \) then by glueing \( X_1 \cup X_2 \) would be an \( \aleph_ { \delta + 1 } \)-spray in \( \R^2 \), and every circle centered in \( \mathbf{c}_1 \) is contained in \( X_1 \cup X_2 = \R^2 \) is of cardinality \( \leq \aleph_ \delta \).
As any circle is in bijection with \( \R \), the result follows.
Therefore we may assume that \( \mathbf{c}_1 \neq \mathbf{c}_2 \).
Applying an isometry if needed, we may assume that \( \mathbf{c}_1 = ( 0 , 0 ) \) and \( \mathbf{c}_2 = ( a , 0 ) \) for some \( a > 0 \). 

Towards a contradiction, suppose \( 2^{\aleph_0} > \aleph_ \delta \).
Fix distinct reals \( r_ \alpha \) in the interval \( ( a / 2 ; a ) \), for \( \alpha < \aleph_ \delta \).
By assumption \( X_ 2 ( \alpha ) \coloneqq X_2 \cap C_2 ( r_ \alpha ) \) has size \( \leq \aleph_ \delta \), so the set
\[
\textstyle \setof{ r \in ( a / 2 ; a ) }{ C_1 ( r ) \cap ( \bigcup_{ \alpha \in \aleph_ \delta } X_2 ( \alpha ) ) \neq \emptyset }
\]
has size \( \leq \aleph_ \delta \).
As \( \card{\R} > \aleph_ \delta \) we may pick \( r \in ( a / 2 ; a ) \) outside of this set.
For each \( \alpha \in \aleph_ \delta \) the set \( C_1 ( r ) \cap C_2 ( r_ \alpha ) \) has size \( 2 \), and its points belong to \( X_1 \).
As the \( C_2 ( r_ \alpha ) \)s are disjoint, it follows that \( C_1 ( r ) \cap X_1 \) has size \( \geq \aleph_ \delta \), contradicting that \( X_1 \) is an \( \aleph_ \delta \)-spray.
\end{proof}

When \( \delta = 0 \) we obtain at once:

\begin{corollary}\label{cor:R2notunionof2sprays}
\( \R^2 \) is not the union of a spray and a \( \sigma \)-spray.
In particular, \( \R^2 \) is not the union of two sprays.
\end{corollary}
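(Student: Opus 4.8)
The plan is to read off this corollary directly from Proposition~\ref{prop:R2notunionof2sprays} by specializing the parameter \( \delta \) to \( 0 \). Recall that, by Definition~\ref{def:spray}, a spray is precisely an \( \aleph_0 \)-spray and a \( \sigma \)-spray is an \( \aleph_1 \)-spray; since \( \aleph_1 = \aleph_{ 0 + 1 } \), the hypothesis of the Proposition with \( \delta = 0 \)---namely ``\( X_1 \) is an \( \aleph_\delta \)-spray and \( X_2 \) is an \( \aleph_{ \delta + 1 } \)-spray''---is exactly ``\( X_1 \) is a spray and \( X_2 \) is a \( \sigma \)-spray''.

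First I would suppose, towards a contradiction, that \( \R^2 = X_1 \cup X_2 \) with \( X_1 \) a spray and \( X_2 \) a \( \sigma \)-spray. Applying Proposition~\ref{prop:R2notunionof2sprays} with \( \delta = 0 \) yields \( 2^{\aleph_0} \leq \aleph_0 \). But Cantor's theorem gives \( \aleph_1 \leq 2^{\aleph_0} \), so \( 2^{\aleph_0} > \aleph_0 \), a contradiction. Hence no such decomposition exists, which proves the first assertion.

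For the ``in particular'' clause, I would observe that every spray is a \( \sigma \)-spray: if \( \card{ \mathbb{S} ( E ; \mathbf{c} , r ) \cap X } < \aleph_0 \) for all \( r \), then a fortiori \( \card{ \mathbb{S} ( E ; \mathbf{c} , r ) \cap X } < \aleph_1 \) for all \( r \). Thus any covering of \( \R^2 \) by two sprays is in particular a covering by a spray and a \( \sigma \)-spray, a situation just excluded; so \( \R^2 \) is not the union of two sprays.

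There is essentially no obstacle here, since all the genuine content has been packaged into Proposition~\ref{prop:R2notunionof2sprays}. The only points requiring any care are the purely notational identification \( \aleph_{ 0 + 1 } = \aleph_1 \) and the invocation of the bound \( \aleph_1 \leq 2^{\aleph_0} \), both of which are recorded in Section~\ref{subsec:Notation}.
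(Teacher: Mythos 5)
Your proof is correct and follows exactly the paper's route: the corollary is obtained by applying Proposition~\ref{prop:R2notunionof2sprays} with \( \delta = 0 \) to conclude \( 2^{\aleph_0} \leq \aleph_0 \), contradicting Cantor's theorem, and the ``in particular'' clause follows since every spray is a \( \sigma \)-spray. Nothing is missing.
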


\begin{corollary}\label{cor:R2unionof2sigmasprays}
The following are equivalent:
\begin{enumerate}[label={\upshape (\alph*)}]
\item
\( \CH \)
\item
\( \R^2 \) is the union of two \( \sigma \)-sprays with prescribed, distinct centers.
\end{enumerate}
\end{corollary}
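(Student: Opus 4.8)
The plan is to read off both implications directly from Theorem~\ref{th:boundoncontinuum=>R^dcoveredwithkappasprays} and Proposition~\ref{prop:R2notunionof2sprays}, specialized to \( d = 2 \) and \( \delta = 1 \); no new construction is needed beyond what those results already supply.

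For the forward direction I would assume \( \CH \), i.e.\ \( 2^{\aleph_0} \leq \aleph_1 \), and fix the prescribed distinct centers \( \mathbf{c}_1 , \mathbf{c}_2 \). The first step is to observe that any two distinct points of \( \R^2 \) are automatically well-placed: they lie on a common line \( H \), and every set of points on a line is in general position in \( H \) (a set of points in \( \R \) is always in general position). I would then apply Theorem~\ref{th:boundoncontinuum=>R^dcoveredwithkappasprays} with \( d = 2 \), \( n = 0 \), \( \delta = 1 \): here \( ( n + 1 ) ( d - 1 ) + 1 = 2 \) and \( \aleph_{ \delta + n } = \aleph_1 \), so the hypothesis \( 2^{\aleph_0} \leq \aleph_1 \) produces \( \aleph_1 \)-sprays \( X_1 , X_2 \) centered at \( \mathbf{c}_1 , \mathbf{c}_2 \) and covering \( \R^2 \); an \( \aleph_1 \)-spray is precisely a \( \sigma \)-spray.

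For the reverse direction I would assume the covering statement and extract \( \sigma \)-sprays \( X_1 , X_2 \) with distinct centers whose union is \( \R^2 \). The one observation to make is a monotonicity of the spray condition: since \( \aleph_1 < \aleph_2 \), the requirement \( \card{ \mathbb{S} \cap X } < \aleph_1 \) implies \( \card{ \mathbb{S} \cap X } < \aleph_2 \), so every \( \aleph_1 \)-spray is in particular an \( \aleph_2 \)-spray. Hence I may view \( X_1 \) as an \( \aleph_1 \)-spray and \( X_2 \) as an \( \aleph_2 \)-spray and invoke Proposition~\ref{prop:R2notunionof2sprays} with \( \delta = 1 \), which yields \( 2^{\aleph_0} \leq \aleph_1 \), that is \( \CH \).

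The argument is essentially bookkeeping, so the only place where care is required---and the step I would treat as the main obstacle---is the correct alignment of the index \( \delta \) in the two cited results with the cardinal \( \aleph_1 \) defining \( \sigma \)-sprays, together with the two easy but necessary remarks that two distinct planar points are well-placed and that an \( \aleph_1 \)-spray counts as an \( \aleph_2 \)-spray. Once these matchings are pinned down, both implications are immediate.
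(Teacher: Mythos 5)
Your proof is correct and is exactly the argument the paper intends: the forward direction is Theorem~\ref{th:boundoncontinuum=>R^dcoveredwithkappasprays} with \( d = 2 \), \( n = 0 \), \( \delta = 1 \) (noting two distinct planar points are well-placed), and the reverse is Proposition~\ref{prop:R2notunionof2sprays} with \( \delta = 1 \) after the harmless upgrade of one \( \aleph_1 \)-spray to an \( \aleph_2 \)-spray. The paper states the corollary without proof precisely because this bookkeeping is immediate.
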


\begin{proposition}\label{prop:Rdnotunionofdsprays}
Let \( n \geq d \geq 3 \), and suppose that the points \( \mathbf{c}_1 , \dots , \mathbf{c}_n \) belong to a hyperplane \( H \) of \( \R^d \).
Suppose there is \( L \subseteq H \) an affine subspace of dimension \( d - 2 \) such that \( \set{ \pi ( \mathbf{c}_1 ) , \dots , \pi ( \mathbf{c}_n ) } \) has size \( \leq d - 1 \), where \( \pi \colon H \to L \) is the orthogonal projection.
Then there are no sprays \( X_1 , \dots , X_n \) that cover \( \R^d \) with \( X_i \) centered in \( \mathbf{c}_i \), for \( i \leq n \).
\end{proposition}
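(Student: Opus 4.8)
The plan is to assume, toward a contradiction, that such sprays \( X_1 , \dots , X_n \) exist, and to project everything orthogonally onto a cleverly chosen \( 2 \)-dimensional subspace \( W \) of \( \R^d \). By \textbf{Projection} and \textbf{Glueing} this will exhibit a copy of \( \R^2 \) as a union of at most two sprays, contradicting Corollary~\ref{cor:R2notunionof2sprays}. The entire difficulty is concentrated in the choice of \( W \): it must be arranged so that the projections of the centers \( \mathbf{c}_1 , \dots , \mathbf{c}_n \) take at most two distinct values, after which gluing finishes the job.

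First I would record how projecting onto a subspace interacts with the tools already available. An orthogonal projection \( \rho \colon \R^d \to W \) onto a subspace \( W \) with \( \dim W \geq 2 \) factors as a composition of orthogonal projections along a chain \( \R^d = W_0 \supseteq W_1 \supseteq \dots \supseteq W_{ d - \dim W } = W \) in which each \( W_{ j + 1 } \) is a hyperplane of \( W_j \); since orthogonal projections onto nested subspaces compose, the net effect is \( \rho \). Applying \textbf{Projection} at each step, and using that \( ( X_i \cap W_j ) \cap W_{ j + 1 } = X_i \cap W_{ j + 1 } \), we conclude that \( X_1 \cap W , \dots , X_n \cap W \) are sprays in \( W \) that cover \( W \), with \( X_i \cap W \) centered at \( \rho ( \mathbf{c}_i ) \).

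It remains to choose \( W \) so that \( \setof{ \rho ( \mathbf{c}_i ) }{ 1 \leq i \leq n } \) has at most two elements. Writing \( U = W^{\perp} \), so that \( \dim U = d - 2 \) and \( \rho ( \mathbf{c}_i ) = \rho ( \mathbf{c}_j ) \) iff \( \mathbf{c}_i - \mathbf{c}_j \in U \), this means the centers must lie in at most two cosets of \( U \). Let \( \mathbf{v} \) span the orthogonal complement of \( L \) inside \( H \), so that \( \pi \) is the projection of \( H \) along \( \mathbf{v} \). By hypothesis \( \setof{ \pi ( \mathbf{c}_i ) }{ i } \) is a set of \( p \leq d - 1 \) points of the \( ( d - 2 ) \)-dimensional space \( L \). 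Here is the key geometric fact: any \( p \leq d - 1 \) points of \( L \) lie in at most two parallel hyperplanes of \( L \). Indeed, any \( p - 1 \leq d - 2 \) of them span an affine subspace of dimension at most \( d - 3 \) and hence lie in a common hyperplane \( \Pi \) of \( L \), while the remaining point lies in the translate of \( \Pi \) through it. Let \( U' \) be the \( ( d - 3 ) \)-dimensional linear subspace parallel to \( \Pi \), and set \( U \coloneqq \Span \setLR{ \mathbf{v} } \oplus U' \), which has dimension \( 1 + ( d - 3 ) = d - 2 \) and is contained in the direction space of \( H \); finally put \( W \coloneqq U^{\perp} \).

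With this choice, if \( \pi ( \mathbf{c}_i ) \) and \( \pi ( \mathbf{c}_j ) \) lie in the same translate of \( \Pi \), then the \( L \)-component of \( \mathbf{c}_i - \mathbf{c}_j \) lies in \( U' \) and its \( \mathbf{v} \)-component lies in \( \Span \setLR{ \mathbf{v} } \), so \( \mathbf{c}_i - \mathbf{c}_j \in U \) and \( \rho ( \mathbf{c}_i ) = \rho ( \mathbf{c}_j ) \). As the \( \pi ( \mathbf{c}_i ) \) occupy at most two such translates, \( \rho \) collapses all the centers to at most two points. Gluing the sprays \( X_i \cap W \) according to their (at most two) common centers then exhibits \( W \cong \R^2 \) as a union of at most two sprays, contradicting Corollary~\ref{cor:R2notunionof2sprays}. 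I expect the only genuine obstacle to be the geometric fact about \( p \leq d - 1 \) points lying in two parallel hyperplanes, together with the bookkeeping that \( U \) has the right dimension and kills exactly the intra-translate differences of centers; once \( W \) is in hand, the argument is a routine application of \textbf{Projection}, \textbf{Glueing}, and the impossibility of covering the plane with two sprays.
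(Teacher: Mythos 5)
Your proof is correct, and it ends where the paper's does --- \textbf{Projection} plus \textbf{Glueing} producing a copy of \( \R^2 \) covered by at most two sprays, against Corollary~\ref{cor:R2notunionof2sprays} --- but it gets there by a different route. The paper takes a minimal counterexample in \( d \): a single application of \textbf{Projection} (onto a hyperplane meeting \( H \) in \( L \)) turns the configuration into at most \( d - 1 \) sprays covering a copy of \( \R^{ d - 1 } \) with centers \( \pi ( \mathbf{c}_i ) \) lying on the hyperplane \( L \); merging two of those centers by a further projection restores the hypothesis one dimension down, and the descent bottoms out at \( d = 3 \) with two sprays covering the plane. You telescope this descent into one orthogonal projection onto a \( 2 \)-plane \( W = U^{\perp} \), which costs you two extra observations: that \textbf{Projection} iterates along a flag of hyperplanes (correct, since orthogonal projections onto nested subspaces compose and \( ( X_i \cap W_j ) \cap W_{ j + 1 } = X_i \cap W_{ j + 1 } \)), and the geometric lemma that \( p \leq d - 1 \) points of the \( ( d - 2 ) \)-dimensional \( L \) lie on two parallel hyperplanes of \( L \) (correct: \( p - 1 \) of them affinely span dimension at most \( d - 3 \), and the last point sits on a parallel translate). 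Your bookkeeping also checks out: \( \mathbf{v} \perp U' \subseteq V_L \), so \( U = \Span \setLR{ \mathbf{v} } \oplus U' \) genuinely has dimension \( d - 2 \) and lies in the direction space of \( H \), and \( \mathbf{c}_i - \mathbf{c}_j = ( \pi ( \mathbf{c}_i ) - \pi ( \mathbf{c}_j ) ) + ( t_i - t_j ) \mathbf{v} \) lands in \( U \) exactly when the two projected centers share a translate of \( \Pi \). What the induction buys the paper is that it only ever needs to merge two centers at a time, so no lemma about point configurations is required; what your version buys is a non-inductive argument with an explicit \( 2 \)-plane witnessing the failure. Two minor points to tidy up: \( W \) should be taken as an affine \( 2 \)-plane (any translate of \( U^{\perp} \) serves, since translating the target does not affect sprays or coverings), and if all the centers collapse to a single point of \( W \) you should still invoke Corollary~\ref{cor:R2notunionof2sprays}, taking the second spray to be empty.
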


\begin{proof}
Towards a contradiction, let \( d \geq 3 \) be least such that the statement fails, and suppose \( X_i \) is a spray centered in \( \mathbf{c}_i \) such that \( \R^d = X_1 \cup \dots \cup X_n \).
Then \( H \cong \R^{ d - 1 } \) is covered by the sprays \( \pi [ X_1 ] , \dots , \pi [ X_n ] \) centered in \( \set{ \pi ( \mathbf{c}_1 ) , \dots , \pi ( \mathbf{c}_n ) } \).
This means that \( \R^{ d - 1} \) can be covered by \( d - 1 \)-many sprays whose centers lie on the hyperplane \( L \).
This contradicts the minimality of \( d \), if \( d - 1 \geq 3 \).
If \( d = 3 \), then we would have that \( \R^2 \) can be covered by two sprays, against Corollary~\ref{cor:R2notunionof2sprays}.
\end{proof}

In~\cite[p.~1169]{Schmerl:2012aa} it is observed that the next result follows from results of Sikorski.

\begin{theorem}\label{th:Rdnotunionofdsprays}
For \( d \geq 2 \) the space \( \R^d \) is not the union of \( d \)-many sprays.
\end{theorem}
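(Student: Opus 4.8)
For $d \geq 2$, the space $\mathbb{R}^d$ is not the union of $d$-many sprays.

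The key prior results I have are Corollary~\ref{cor:R2notunionof2sprays} (the base case $d=2$: $\mathbb{R}^2$ is not the union of two sprays), the Projection principle (sprays project to sprays under orthogonal projection to a hyperplane, with centers projecting to centers), and Gluing (sprays with a common center can be merged). Let me think about how to leverage these.

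Let me think about the structure. The statement is about $d$ sprays in $\mathbb{R}^d$, with *arbitrary* centers — no general-position or well-placedness hypothesis. So I can't invoke Proposition~\ref{prop:Rdnotunionofdsprays} directly, which needs the projection-to-$L$ hypothesis.

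The natural approach is induction on $d$. Suppose for contradiction $\mathbb{R}^d = X_1 \cup \dots \cup X_d$ with $X_i$ a spray centered at $\mathbf{c}_i$. I want to project down to $\mathbb{R}^{d-1}$ and apply the inductive hypothesis. Projection gives $d$ sprays covering a hyperplane $H \cong \mathbb{R}^{d-1}$, but that's $d$ sprays in dimension $d-1$, which is *too many* — the inductive hypothesis only forbids $d-1$ sprays in $\mathbb{R}^{d-1}$. So the crux must be: I need to choose the projection direction so that two of the centers collapse to the same point, and then glue.

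The centers $\mathbf{c}_1, \dots, \mathbf{c}_d$ are $d$ points in $\mathbb{R}^d$. If two of them coincide, say $\mathbf{c}_1 = \mathbf{c}_2$, then by Gluing $X_1 \cup X_2$ is a single spray, giving $d-1$ sprays covering $\mathbb{R}^d$ — even stronger, contradicting the inductive hypothesis after projecting arbitrarily. If all $d$ centers are distinct, I pick a hyperplane $H$ orthogonal to $\mathbf{c}_1 - \mathbf{c}_2$ (which is a nonzero vector) and project: then $\pi(\mathbf{c}_1) = \pi(\mathbf{c}_2)$, so gluing $\pi$-images collapses two sprays into one, yielding $d-1$ sprays covering $H \cong \mathbb{R}^{d-1}$, contradicting the inductive hypothesis.

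**The plan.** I would argue by induction on $d \geq 2$. The base case $d = 2$ is exactly Corollary~\ref{cor:R2notunionof2sprays}. For the inductive step, assume the result for $d-1$ and suppose toward a contradiction that $\mathbb{R}^d = X_1 \cup \dots \cup X_d$, where $X_i$ is a spray centered at $\mathbf{c}_i$. Since $d \geq 2$, the $d$ centers cannot all be distinct only matters if they are: if some $\mathbf{c}_i = \mathbf{c}_j$ with $i \neq j$, glue $X_i, X_j$ into one spray to get $d-1$ sprays covering $\mathbb{R}^d$, then project along any hyperplane to get $d-1 \geq 2$ sprays covering $\mathbb{R}^{d-1}$ (a priori fewer, but never more), contradicting the inductive hypothesis. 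Otherwise the centers are pairwise distinct, so $\mathbf{c}_1 - \mathbf{c}_2 \neq \mathbf{0}$; let $\pi \colon \mathbb{R}^d \to H$ be the orthogonal projection onto a hyperplane $H$ orthogonal to $\mathbf{c}_1 - \mathbf{c}_2$. By the Projection principle, $X_1 \cap H, \dots, X_d \cap H$ are sprays covering $H$ with centers $\pi(\mathbf{c}_1), \dots, \pi(\mathbf{c}_d)$, and $\pi(\mathbf{c}_1) = \pi(\mathbf{c}_2)$ since $\mathbf{c}_1 - \mathbf{c}_2$ is orthogonal to $H$. By Gluing, $(X_1 \cap H) \cup (X_2 \cap H)$ is a single spray in $H$ centered at this common point, so $H \cong \mathbb{R}^{d-1}$ is covered by $d-1$ sprays, contradicting the inductive hypothesis.

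The one point requiring mild care is the degenerate counting: after gluing I genuinely get at most $d-1$ sprays, and the inductive hypothesis forbids covering $\mathbb{R}^{d-1}$ with $(d-1)$-many sprays; fewer than $d-1$ sprays only makes the contradiction sharper (pad with empty sprays if a strict count is wanted, though empty sets are trivially sprays so no padding is needed). I expect no genuine obstacle here — the whole argument is a clean reduction engine built from Projection plus Gluing, collapsing the dimension and the spray-count in lockstep, anchored by the $d=2$ base case. The only thing to state explicitly is that the orthogonal projection image $H$ is itself an affine space of dimension $d-1$ to which the inductive hypothesis applies, which is immediate since every hyperplane of $\mathbb{R}^d$ is isometric to $\mathbb{R}^{d-1}$.
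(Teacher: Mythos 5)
Your proof is correct and uses essentially the same mechanism as the paper: project orthogonally to $\mathbf{c}_i - \mathbf{c}_j$ so that two centers collapse, glue, and induct on the dimension down to the planar case of Corollary~\ref{cor:R2notunionof2sprays}. The paper merely packages this induction inside Proposition~\ref{prop:Rdnotunionofdsprays} (using that any $d$ points of $\R^d$ lie in a hyperplane) rather than running it directly, so the two arguments are the same in substance.
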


\begin{proof}
The case \( d = 2 \) is Corollary~\ref{cor:R2notunionof2sprays}.
Suppose \( d \geq 3 \) and that \( \mathbf{c}_1 , \dots , \mathbf{c}_d \) are centers of sprays \( X_1 , \dots , X_d \) that cover \( \R^d \).
Let \( H \) be a hyperplane containing these points, and apply Proposition~\ref{prop:Rdnotunionofdsprays} with \( n = d - 1 \): if \( L \) is an affine subspace of \( H \) of dimension \( d - 2 > 0 \) that is orthogonal to the vector \( \mathbf{c}_d - \mathbf{c}_{ d - 1} \), then \( \set{ \pi ( \mathbf{c}_1 ) , \dots , \pi ( \mathbf{c}_d ) } \) has size \( \leq d-1 \) since \( \pi ( \mathbf{c}_d ) = \pi ( \mathbf{c}_{ d - 1} ) \).
But Proposition~\ref{prop:Rdnotunionofdsprays} implies that the \( X_i \)s cannot cover \( \R^d \).
\end{proof}

A similar argument shows:

\begin{theorem}\label{th:Rdnotunionofd-1sigmasprays}
For \( d \geq 2 \) the space \( \R^d \) is not the union of \( ( d - 1 ) \)-many \( \sigma \)-sprays.
\end{theorem}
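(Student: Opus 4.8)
The plan is to mimic the proof of Theorem~\ref{th:Rdnotunionofdsprays}, but since Proposition~\ref{prop:Rdnotunionofdsprays} is stated only for sprays, I would run a direct induction on \( d \) using the \emph{Glueing} and \emph{Projection} principles together with a base case handled by hand.

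First, the base case. For \( d = 2 \) the claim is that \( \R^2 \) is not a single \( \sigma \)-spray; more generally I would record the observation that for every \( d \geq 2 \) the whole space \( \R^d \) is not a single \( \sigma \)-spray. Indeed, if \( X \) were a \( \sigma \)-spray with center \( \mathbf{c} \) and \( X = \R^d \), then for any \( r > 0 \) the sphere \( \mathbb{S}(\R^d; \mathbf{c}, r) \) would be contained in \( X \); but since \( \dim \R^d \geq 2 \) this sphere has cardinality \( 2^{\aleph_0} \geq \aleph_1 \), contradicting \( \card{\mathbb{S}(\R^d;\mathbf{c},r) \cap X} < \aleph_1 \).

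For the inductive step, fix \( d \geq 3 \), assume the statement for \( d-1 \), and suppose towards a contradiction that \( \R^d = X_1 \cup \dots \cup X_{d-1} \) with each \( X_i \) a \( \sigma \)-spray centered in \( \mathbf{c}_i \). If all the centers coincide, Glueing collapses the union into a single \( \sigma \)-spray equal to \( \R^d \), contradicting the observation above. Otherwise two centers are distinct, say \( \mathbf{c}_{d-2} \neq \mathbf{c}_{d-1} \) after relabeling. I would choose a hyperplane \( H \) of \( \R^d \) whose normal direction is \( \mathbf{c}_{d-1} - \mathbf{c}_{d-2} \), so that the orthogonal projection \( \pi \colon \R^d \to H \) satisfies \( \pi(\mathbf{c}_{d-2}) = \pi(\mathbf{c}_{d-1}) \). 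By Projection, the sets \( X_1 \cap H, \dots, X_{d-1} \cap H \) are \( \sigma \)-sprays covering \( H \cong \R^{d-1} \) with centers \( \pi(\mathbf{c}_1), \dots, \pi(\mathbf{c}_{d-1}) \). Since \( \pi(\mathbf{c}_{d-2}) = \pi(\mathbf{c}_{d-1}) \), Glueing merges \( (X_{d-2} \cap H) \cup (X_{d-1} \cap H) \) into one \( \sigma \)-spray, leaving only \( d-2 \) many \( \sigma \)-sprays covering \( \R^{d-1} \). This contradicts the inductive hypothesis, completing the induction (note that for \( d = 3 \) the hypothesis is exactly the base case).

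The argument is essentially a bookkeeping of dimension against number of sprays: each projection drops the dimension by one while a simultaneous collapse of two centers (available as long as not all centers agree) drops the number of \( \sigma \)-sprays by one, carrying \( (d, d-1) \) to \( (d-1, d-2) \). I expect the only delicate point to be the degenerate case where the centers fail to be distinct, which is why I would isolate the single-\( \sigma \)-spray observation up front; once that is in place the reduction is forced and no quantitative estimate on the continuum is needed.
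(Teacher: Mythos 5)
Your proof is correct and is essentially the argument the paper has in mind: the paper only says ``a similar argument shows'' (referring to the projection-and-glueing reduction used for Theorem~\ref{th:Rdnotunionofdsprays}), and your induction --- projecting along \( \mathbf{c}_{d-1} - \mathbf{c}_{d-2} \) to drop both the dimension and the number of \( \sigma \)-sprays by one, with the degenerate all-centers-equal case and the \( d = 2 \) base case handled by the cardinality of a sphere --- is exactly that argument written out in full.
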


\begin{theorem}\label{th:sigmaspraysinRd}
For \( d \geq 2 \) the following are equivalent:
\begin{enumerate}[label={\upshape (\alph*)}]
\item\label{th:sigmaspraysinRd-a}
\( \CH \);
\item\label{th:sigmaspraysinRd-b}
for all well-placed \( \mathbf{c}_1 , \dots , \mathbf{c}_d \in \R^d \) there are \( X_1 , \dots , X_d \) covering \( \R^d \) such that each \( X_i \) is a \( \sigma \)-spray with center \( \mathbf{c}_i \);
\item\label{th:sigmaspraysinRd-c}
there are well-placed \( \mathbf{c}_1 , \dots , \mathbf{c}_d \in \R^d \) and \( X_1 , \dots , X_d \) covering \( \R^d \) such that each \( X_i \) is a \( \sigma \)-spray with center \( \mathbf{c}_i \).
\end{enumerate}
\end{theorem}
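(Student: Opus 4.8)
The plan is to establish the cycle \ref{th:sigmaspraysinRd-a}$\IMPLIES$\ref{th:sigmaspraysinRd-b}$\IMPLIES$\ref{th:sigmaspraysinRd-c}$\IMPLIES$\ref{th:sigmaspraysinRd-a}. The first implication is just an instance of Theorem~\ref{th:boundoncontinuum=>R^dcoveredwithkappasprays}: taking \( \delta = 1 \) and \( n = 0 \) we have \( ( n + 1 ) ( d - 1 ) + 1 = d \) and \( \aleph_{ \delta + n } = \aleph_1 \), so \( \CH \) (that is, \( 2^{\aleph_0} \leq \aleph_1 \)) yields, for any prescribed well-placed \( \mathbf{c}_1 , \dots , \mathbf{c}_d \), a covering of \( \R^d \) by \( d \)-many \( \aleph_1 \)-sprays, i.e.\ \( \sigma \)-sprays, centered at the \( \mathbf{c}_i \). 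The implication \ref{th:sigmaspraysinRd-b}$\IMPLIES$\ref{th:sigmaspraysinRd-c} is immediate, since well-placed \( d \)-tuples exist in \( \R^d \).

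The substance is \ref{th:sigmaspraysinRd-c}$\IMPLIES$\ref{th:sigmaspraysinRd-a}, which I would prove by descending on the dimension, repeatedly applying Projection and Glueing to push a covering of \( \R^d \) by \( d \) \( \sigma \)-sprays with well-placed centers down to a covering of \( \R^2 \) by two \( \sigma \)-sprays with distinct centers. Suppose \( \R^d = X_1 \cup \dots \cup X_d \) with each \( X_i \) a \( \sigma \)-spray centered at \( \mathbf{c}_i \), and \( \mathbf{c}_1 , \dots , \mathbf{c}_d \) well-placed; since these are exactly \( d \) points, being well-placed in \( \R^d \) is the same as being in general position in \( \R^d \). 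I would choose a hyperplane \( H' \) of \( \R^d \) orthogonal to \( \mathbf{c}_{ d - 1 } - \mathbf{c}_d \) and let \( \pi \colon \R^d \to H' \) be the orthogonal projection. By Projection the sets \( X_1 \cap H' , \dots , X_d \cap H' \) are \( \sigma \)-sprays covering \( H' \cong \R^{ d - 1 } \) with centers \( \pi ( \mathbf{c}_1 ) , \dots , \pi ( \mathbf{c}_d ) \), and by construction \( \pi ( \mathbf{c}_{ d - 1 } ) = \pi ( \mathbf{c}_d ) \). Lemma~\ref{lem:projectiogeneralposition}, applied with \( k = d - 1 \), guarantees that \( \pi ( \mathbf{c}_1 ) , \dots , \pi ( \mathbf{c}_{ d - 1 } ) \) are distinct and in general position—hence well-placed—in \( H' \). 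Glueing \( X_{ d - 1 } \cap H' \) and \( X_d \cap H' \), which share the center \( \pi ( \mathbf{c}_{ d - 1 } ) \), into a single \( \sigma \)-spray then exhibits \( \R^{ d - 1 } \) as a union of \( d - 1 \) \( \sigma \)-sprays with well-placed centers.

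Iterating this reduction \( d - 2 \) times brings us to \( \R^2 = Y_1 \cup Y_2 \) with \( Y_1 , Y_2 \) \( \sigma \)-sprays whose centers are distinct, distinctness being preserved at every step by Lemma~\ref{lem:projectiogeneralposition}. Since a \( \sigma \)-spray is both an \( \aleph_1 \)-spray and an \( \aleph_2 \)-spray, Proposition~\ref{prop:R2notunionof2sprays} with \( \delta = 1 \) (equivalently Corollary~\ref{cor:R2unionof2sigmasprays}) yields \( 2^{\aleph_0} \leq \aleph_1 \), i.e.\ \( \CH \).

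The main obstacle is checking that the descent preserves exactly the hypotheses needed to keep recursing—that after each projection the \( d - 1 \) new centers are still distinct and well-placed. This is precisely what Lemma~\ref{lem:projectiogeneralposition} delivers, once one notes that \( d \) well-placed points in \( \R^d \) are automatically in general position in the full space \( \R^d \), so that the lemma applies with ambient space \( \R^d \) itself rather than the hyperplane on which the centers lie; the same observation then re-applies to the \( d - 1 \) projected points in \( \R^{ d - 1 } \) at the next stage. The remaining ingredients—that Projection produces \( \sigma \)-sprays on the image hyperplane, and that Glueing merges two same-center \( \sigma \)-sprays into one—are the elementary facts recorded just before Proposition~\ref{prop:R2notunionof2sprays}.
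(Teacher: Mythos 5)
Your proof is correct and follows essentially the same route as the paper: the forward directions via Theorem~\ref{th:boundoncontinuum=>R^dcoveredwithkappasprays}, and for \ref{th:sigmaspraysinRd-c}\(\IMPLIES\)\ref{th:sigmaspraysinRd-a} a dimension reduction by Projection and Glueing (with Lemma~\ref{lem:projectiogeneralposition} preserving general position of the centers), terminating in Proposition~\ref{prop:R2notunionof2sprays}. The paper organizes this as an induction on \( d \) starting from \( d = 2 \) while you phrase it as an iterated descent, but the argument is the same.
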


\begin{proof}
\ref{th:sigmaspraysinRd-a}\( \IMPLIES \)\ref{th:sigmaspraysinRd-b} follows from Theorem~\ref{th:boundoncontinuum=>R^dcoveredwithkappasprays}; \ref{th:sigmaspraysinRd-b}\( \IMPLIES \)\ref{th:sigmaspraysinRd-c} is trivial, while \ref{th:sigmaspraysinRd-c}\( \IMPLIES \)\ref{th:sigmaspraysinRd-a} is established by induction on \( d \geq 2 \).

When \( d = 2 \) the result follows at once from Proposition~\ref{prop:R2notunionof2sprays}.
Suppose the result holds for some \( d \) towards proving it for \( d + 1 \).
Fix \( \sigma \)-sprays \( X_1 , \dots , X_{ d + 1 } \) covering \( \R^{ d + 1 } \) and centered in well-placed \( \mathbf{c}_1 , \dots , \mathbf{c}_{ d + 1 } \).
Let \( H \subseteq \R^d \) be the hyperplane determined by the \( \mathbf{c}_i \)s.
Let \( H' \) be a hyperplane orthogonal to \( \mathbf{c}_{d + 1} - \mathbf{c}_d \), and let \( \pi \colon \R^{ d + 1} \to H ' \) be the orthogonal projection.
By projecting and glueing \( X_1 \cap H \), \( X_1 \cap H \), \ldots , \( ( X_d \cup X_{ d + 1} ) \cap H \) are \( \sigma \)-sprays centered in the points \( \pi ( \mathbf{c}_1 ) \), \( \pi ( \mathbf{c}_2 ) \), \ldots , \( \pi ( \mathbf{c}_d ) = \pi ( \mathbf{c}_{ d + 1 } ) \) which are well-placed, and belong to \( H' \cong \R^d \).
By inductive assumption, \( \CH \) holds.
\end{proof}

\section{Transforming sprays into linear objects}\label{sec:Transforming-sprays-into-linear-objects}
In this section we construct, for every \( d \geq 2 \), a continuous map \( \Phi \) that transforms any spray of \( \R^d \) with center \( \mathbf{c} \) into a set \( A \subseteq \R^d \) such that \( A \cap H \) is finite, for every hyperplane \( H \) orthogonal to some vector \( \mathbf{u} \), and conversely.
(The vector \( \mathbf{u} \) depends only on the point \( \mathbf{c} \).)
This is an extension and an elaboration of the construction used by Schmerl when \( d = 2 \) to prove that if \( \R^2 \) is the union of \( ( n + 2 ) \)-many sprays with collinear centers, then \( 2^{\aleph_0} \leq \aleph_n \)~\cite[Theorem 7]{Schmerl:2010nr}.

Let \( d \geq 2 \), and fix distinct points \( \mathbf{p}_1 , \dots , \mathbf{p}_d \) in \( \R^{ d - 1} \).
For each \( \mathbf{q} \in \R^{ d - 1} \) the vectors \( \mathbf{p}_1 - \mathbf{q} , \dots , \mathbf{p}_d - \mathbf{q} \) are linearly dependent, and hence 
\begin{equation}\label{eq:U}
\mathcal{U} ( \mathbf{q} ) \coloneqq \setof{ ( u_1 , \dots , u_d ) \in \R^d}{ u_1 ( \mathbf{p}_1 - \mathbf{q} ) + \dots + u_d ( \mathbf{p}_d - \mathbf{q} ) = \mathbf{0} } 
\end{equation}
is a vector subspace of \( \R^d \) of dimension \( \geq 1 \).
Clearly this space depends on the \( \mathbf{p}_i \)s, so we should write \( \mathcal{U}_{\mathbf{p}_1, \dots , \mathbf{p}_d } ( \mathbf{q} ) \), but the \( \mathbf{p}_i \)s will be clear from the context.
Observe that if \( \mathbf{e}_i \in \mathcal{U} ( \mathbf{q} ) \) then \( \mathbf{q} = \mathbf{p}_i \), and hence if \( \mathbf{q} \) is distinct from \( \mathbf{p}_1 , \dots , \mathbf{p}_d \) then no \( \mathbf{u} \in \mathcal{U} ( \mathbf{q} ) \setminus \set{ \mathbf{0}} \) is parallel to a basis vector of \( \R^d \).

\begin{theorem}\label{th:Ivan}
Suppose \( \mathbf{p}_1 , \dots , \mathbf{p}_d , \mathbf{q} \in \R^{ d - 1 } \).
For every \( ( u_1 , \dots , u_d ) \in \mathcal{U} ( \mathbf{q} ) \setminus \set{ \mathbf{0} } \) letting \( b \coloneqq - \sum_{ i = 1 }^{ d } u_i \) and \( c \coloneqq - \sum_{ i = 1 }^{ d } u_i ( \| \mathbf{p}_i \|^2 - \| \mathbf{q} \|^2 ) \) we have that
\[
\FORALL{ \mathbf{x} \in \R^{ d - 1} } \bigl ( u_1 \| \mathbf{x} - \mathbf{p}_1 \|^2 + \dots + u_d \| \mathbf{x} - \mathbf{p}_d \|^2 + b \| \mathbf{x} - \mathbf{q} \|^2 + c = 0 \bigr ) .
\]
\end{theorem}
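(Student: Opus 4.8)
The plan is to expand every squared norm on the left-hand side and to collect the result as a polynomial in \( \mathbf{x} \), sorted by degree. First I would use the identity \( \| \mathbf{x} - \mathbf{v} \|^2 = \| \mathbf{x} \|^2 - 2 \, \mathbf{x} \boldsymbol{\cdot} \mathbf{v} + \| \mathbf{v} \|^2 \), valid for any \( \mathbf{v} \in \R^{d-1} \), applied with \( \mathbf{v} = \mathbf{p}_1 , \dots , \mathbf{p}_d \) and with \( \mathbf{v} = \mathbf{q} \). Substituting these into \( u_1 \| \mathbf{x} - \mathbf{p}_1 \|^2 + \dots + u_d \| \mathbf{x} - \mathbf{p}_d \|^2 + b \| \mathbf{x} - \mathbf{q} \|^2 + c \) and grouping by degree yields an expression of the shape \( A \| \mathbf{x} \|^2 - 2 \, \mathbf{x} \boldsymbol{\cdot} \mathbf{w} + C \), where the scalar \( A = \sum_{i=1}^d u_i + b \), the vector \( \mathbf{w} = \sum_{i=1}^d u_i \mathbf{p}_i + b \mathbf{q} \), and the scalar \( C = \sum_{i=1}^d u_i \| \mathbf{p}_i \|^2 + b \| \mathbf{q} \|^2 + c \). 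Since \( d - 1 \geq 1 \), this affine-quadratic function is identically zero on \( \R^{d-1} \) if and only if \( A = 0 \), \( \mathbf{w} = \mathbf{0} \), and \( C = 0 \), so it suffices to check these three coefficients in turn.

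The quadratic coefficient \( A \) vanishes immediately from the definition \( b = - \sum_{i=1}^d u_i \). The linear coefficient is the crucial one: using \( b = - \sum_{i=1}^d u_i \) I rewrite \( \mathbf{w} = \sum_{i=1}^d u_i \mathbf{p}_i - \bigl( \sum_{i=1}^d u_i \bigr) \mathbf{q} = \sum_{i=1}^d u_i ( \mathbf{p}_i - \mathbf{q} ) \), which is exactly \( \mathbf{0} \) because \( ( u_1 , \dots , u_d ) \in \mathcal{U} ( \mathbf{q} ) \) by~\eqref{eq:U}. Finally, substituting the definition \( c = - \sum_{i=1}^d u_i ( \| \mathbf{p}_i \|^2 - \| \mathbf{q} \|^2 ) \) together with \( b = - \sum_{i=1}^d u_i \) shows that the constant \( C \) telescopes to zero.

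There is no genuine obstacle here: the statement is a direct algebraic identity, and its entire content is the bookkeeping of these three cancellations. The only point worth emphasizing is conceptual rather than technical—the degree-one part in \( \mathbf{x} \) disappears precisely because \( ( u_1 , \dots , u_d ) \) was chosen in \( \mathcal{U} ( \mathbf{q} ) \), so the defining condition of \( \mathcal{U} ( \mathbf{q} ) \) is exactly what is needed to annihilate the affine part of the expression, while the choices of \( b \) and \( c \) are then forced in order to kill the degree-two and degree-zero parts respectively. I would present the argument by displaying the grouped expression \( A \| \mathbf{x} \|^2 - 2 \, \mathbf{x} \boldsymbol{\cdot} \mathbf{w} + C \) once and verifying \( A = 0 \), \( \mathbf{w} = \mathbf{0} \), \( C = 0 \), rather than writing out a line-by-line expansion.
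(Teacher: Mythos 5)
Your proposal is correct and is essentially the same direct algebraic verification as the paper's: the paper merely organizes the computation by first forming the differences \( \| \mathbf{x} - \mathbf{p}_i \|^2 - \| \mathbf{x} - \mathbf{q} \|^2 \) (which cancels the \( \| \mathbf{x} \|^2 \) terms at the outset) and then summing against the \( u_i \), whereas you expand everything and collect by degree; the three cancellations you identify (quadratic via the definition of \( b \), linear via membership in \( \mathcal{U} ( \mathbf{q} ) \), constant via the definition of \( c \)) are exactly the ones the paper uses.
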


\begin{proof}
For notational ease let \( F_i ( \mathbf{x} ) = \| \mathbf{x} - \mathbf{p}_i \|^2 \) and \( F ( \mathbf{x} ) = \| \mathbf{x} - \mathbf{q} \|^2 \).
For \( \mathbf{x} \in \R^{ d - 1} \) and \( 1 \leq i \leq d \)
\[
\begin{split}
F_i ( \mathbf{x} ) - F ( \mathbf{x} ) & = \| \mathbf{x} - \mathbf{p}_i \|^2 - \| \mathbf{x} - \mathbf{q} \|^2
\\
 & = \| \mathbf{x} \|^2 + \| \mathbf{p}_i \|^2 - 2 \mathbf{x} \boldsymbol{\cdot} \mathbf{p}_i - \| \mathbf{x} \|^2 - \| \mathbf{q} \|^2 + 2 \mathbf{x} \boldsymbol{\cdot} \mathbf{q}
 \\
 & = \| \mathbf{p}_i \|^2 - \| \mathbf{q} \|^2 - 2 \mathbf{x} \boldsymbol{\cdot} ( \mathbf{p}_i - \mathbf{q} ) .
\end{split}
\]
This implies
\[
\sum_{i=1}^d u_i ( F_i ( \mathbf{x} ) - F ( \mathbf{x} ) ) = 2 \mathbf{x} \boldsymbol{\cdot} \sum_{i=1}^d u_i ( \mathbf{p}_i - \mathbf{q} ) + \sum_{ i = 1 }^d u_i ( \| \mathbf{p}_i \|^2 - \| \mathbf{q} \|^2 ) = - c .
\]
By definition of \( b \) one obtains \( ( \sum_{i=1}^d u_i F_i ( \mathbf{x} ) ) + b F ( \mathbf{x} ) + c = 0 \) for all \( \mathbf{x} \in \R^{ d - 1} \), as required.
\end{proof}

The definition of \( \aleph_ \delta \)-spray was given for \( \R^d \), but it can be adapted to the space 
\[
 \mathbb{H}^d \coloneqq \R^{d - 1} \times ( 0 ; + \infty ) \subseteq \R^d
\]
as follows: an \( \aleph_ \delta \)-spray in \( \mathbb{H}^d \) is a set \( X \subseteq \mathbb{H}^d \) together with a point \( \mathbf{c} \), the center of \( X \), belonging to the hyperplane \( \R^{ d - 1} \times \setLR{0} \) of \( \R^d \), such that \( \card{ S \cap X } < \aleph_ \delta \) for any sphere \( S \) of \( \R^d \) centered in \( \mathbf{c} \).
(Observe that \( \mathbf{c} \) does not belong to \( \mathbb{H} \).)

If \( X_1 , X_2 , \dots \) are \( \aleph_ \delta \)-sprays in \( \R^d \) with centers \( \mathbf{c}_1 , \mathbf{c}_2 , \ldots \in \R^{d - 1} \times \set{0} \), then \( X_1 \cap \mathbb{H}^d , X_2 \cap \mathbb{H}^d , \dots \) are \( \aleph_ \delta \)-sprays in \( \mathbb{H}^d \) with the same centers; moreover, if \( X_1 , X_2 , \dots \) cover \( \R^d \), then \( X_1 \cap \mathbb{H}^d , X_2 \cap \mathbb{H}^d , \dots \) cover \( \mathbb{H}^d \).
Therefore we may focus on covering \( \mathbb{H}^d \) with \( \aleph_ \delta \)-sprays in \( \mathbb{H}^d \) with centers on \( \R^{ d - 1 } \times \set{0} \).

Let \( \mathbf{c}_1 , \dots , \mathbf{c}_d \) be points in general position in \( \R^d \).
These points belong to a hyperplane, and without loss of generality we may assume that \( \mathbf{c}_1 , \dots , \mathbf{c}_d \in \R^{ d - 1 } \times \set{0} \).
The set 
\[
 E^d \coloneqq \setof{ ( r_1 , \dots , r_d ) }{ \EXISTS{ \mathbf{x} \in \mathbb{H}^d } \FORALL{i \leq d} ( r_i = \| \mathbf{x} - \mathbf{c}_i \|^2 ) } 
\]
is an open subset of \( \R^d_+ \coloneqq \setof{ ( r_1 , \dots , r_d ) \in \R^d }{ \FORALL{i \leq d} ( r_i > 0 )} \), and 
\begin{equation}\label{eq:Phi}
\Phi \colon \mathbb{H}^d \to E^d , \qquad \mathbf{x} \mapsto ( \| \mathbf{x} - \mathbf{c}_1 \|^2 , \dots , \| \mathbf{x} - \mathbf{c}_d \|^2 ) 
\end{equation}
is a homeomorphism.
Every element of \( E = E^d \) determines a unique point in \( \mathbb{H}^d \), and this is the reason for retreating form \( \R^d \) to \( \mathbb{H}^d \).

The map \( \Phi \) transforms spheres centered around \( \mathbf{c}_i \) into hyperplanes of \( \R^d \) orthogonal to \( \mathbf{e}_i \), and conversely. 
To be specific---and recalling Notation~\ref{ntn:hyperplane}---if \( S \) is the sphere of \( \R^d \) centered in \( \mathbf{c}_i \) of radius \( r \), then \( \Phi [ S \cap \mathbb{H}^d ] \) is the intersection between \( E \) and the hyperplane \( H_i ( r^2 ) \) of \( \R^d \); conversely, \( \Phi^{-1} [ H_i ( r^2 ) \cap E ] = S \cap \mathbb{H}^d \).
Therefore if \( X \subseteq \mathbb{H}^d \) is an \( \aleph_ \delta \)-spray centered in \( \mathbf{c}_i \), then \( \Phi [ X ] \subseteq E \) is such that \( \card{ \Phi [ X ] \cap H_i ( r ) } < \aleph_ \delta \) for every \( r > 0 \); conversely if \( Y \subseteq E \) intersects all hyperplanes orthogonal to \( \mathbf{e}_i \) in a set of size \( < \aleph_ \delta \) then \( \Phi ^{- 1 } [ Y ] \subseteq \mathbb{H}^d \) is an \( \aleph_ \delta \)-spray centered in \( \mathbf{c}_i \).

Suppose \( \mathbf{c}_{ d + 1 } \in \R^{ d - 1 } \times \set{0} \) is distinct from \( \mathbf{c}_1 , \dots , \mathbf{c}_d \).
Let
\[
\check{ E } = \setof{ ( r_1 , \dots , r_{ d + 1 } ) \in \R^{ d + 1 }_+}{ \EXISTS{ \mathbf{x} \in \mathbb{H}^d } \FORALL{i \leq d + 1 } ( r_i = \| \mathbf{x} - \mathbf{c}_i \|^2 ) } .
\] 
Then \( \check{ \Phi } \colon \mathbb{H}^d \to \check{ E } \), \( \mathbf{x} \mapsto( \| \mathbf{x} - \mathbf{c}_1 \|^2 , \dots , \| \mathbf{x} - \mathbf{c}_{ d + 1 } \|^2 ) \), is a homeomorphism.
Applying Theorem~\ref{th:Ivan}with \( \mathbf{p}_i = \mathbf{c}_i \) and \( \mathbf{q} = \mathbf{c}_{ d + 1 } \), there are \( \mathbf{u} = ( u_1 , \dots , u_d ) \in \mathcal{U} ( \mathbf{c}_{ d + 1 } ) \setminus \set{ \mathbf{0} } \) and \( b , c \in \R \), such that 
\[ 
 P = \setof{ ( w_1 , \dots , w_{ d + 1 } ) }{ u_1 w_1 + \dots + u_d w_d + b w_{ d + 1 } + c = 0 } 
\] 
is a hyperplane of \( \R^{ d + 1 } \).
As \( \mathbf{c}_{ d + 1 } \neq \mathbf{c}_i \) for \( i \leq d \), the vector \( \mathbf{u} \) is not orthogonal to any \( \mathbf{e}_i \) for \( 1 \leq i \leq d \), and the same is true of \( P \).
Therefore the projection \( \pi \colon \R^d \times \R \to \R^d \) is a bijection between \( P \) and \( \R^d \).
Let
\[
\check{ E } = \setof{ ( r_1 , \dots , r_{ d + 1 } ) \in \R^{ d + 1 } _+ }{ ( r_1 , \dots , r_d ) \in E \AND ( r_1 , \dots , r_{ d + 1 } ) \in P } ,
\]
that is: \( \check{ E } \) is the subset of \( P \) that projects onto \( E \), and \( \pi \restriction \check{ E } \) is a homeomorphism between \( \check{ E } \) and \( E \). 
For any \( k \in \R \) the set 
\[
\begin{split}
L ( \mathbf{u} , k ) & \coloneqq \setof{ ( r_1 , \dots , r_d ) }{ u_1 r_1 + \dots + u_d r_d + b k + c = 0 } 
\\
 & = \setof{ ( r_1 , \dots , r_d ) }{ ( r_1 , \dots , r_d , k ) \in P } 
\end{split}
\] 
is a hyperplane of \( \R^d \) orthogonal to \( \mathbf{u} \), and all hyperplanes of \( \R^d \) orthogonal to \( \mathbf{u} \) are of this form.
Arguing as before, if \( X \subseteq \mathbb{H}^d \) is an \( \aleph_ \delta \)-spray centered in \( \mathbf{c}_{ d + 1 } \) then \( \Phi [ X ] \) is a subset of \( E \) that intersects every \( L ( \mathbf{u} , k ) \) in a set of size \( < \aleph_ \delta \); conversely if \( Y \subseteq E \) is such that every \( L ( \mathbf{u} , k ) \) intersects \( Y \) in \( < \aleph_ \delta \)-many points, then \( \Phi ^{ - 1 } [ Y ] \) is an \( \aleph_ \delta \)-spray centered in \( \mathbf{c}_{ d + 1 } \).

Let us summarize what we proved so far.

\begin{theorem}\label{th:transfer}
Suppose \( \mathbf{c}_1 , \dots , \mathbf{c}_d \) are in general position in \( \R^d \), and without loss of generality we may assume that they belong to \( \R^{d - 1} \times \set{0} \).
There is an open set \( \emptyset \neq E^d \subseteq \R^d \) and a homeomorphism \( \Phi \colon \mathbb{H}^d \to E^d \) that transforms any \( \aleph_ \delta \)-spray of \( \mathbb{H}^d \) centered in \( \mathbf{c}_i \) into a subset of \( E^d \) that intersects any hyperplane orthogonal to \( \mathbf{e}_i \) in a set of size \( < \aleph_ \delta \), and conversely. 

Let \( \mathbf{c} \in \R^{d - 1} \times \set{0} \) be distinct from the \( \mathbf{c}_i \)s, and let \( ( u_1 , \dots , u_d ) \in \mathcal{U} ( \mathbf{c} ) \setminus \set{ \mathbf{0} } \).
Then \( \Phi \) maps any \( \aleph_ \delta \)-spray centered in \( \mathbf{c} \) into a subset of \( E \) that intersects any hyperplane orthogonal to \( \mathbf{u} = ( u_1 , \dots , u_d )\) in a set of size \( < \aleph_ \delta \), and conversely.
\end{theorem}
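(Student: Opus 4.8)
The plan is to assemble the three assertions from the squared-distance-to-linear-coordinate correspondence developed above: that \( \Phi \) is a homeomorphism, that spheres about each \( \mathbf{c}_i \) become hyperplanes orthogonal to \( \mathbf{e}_i \), and that spheres about the extra center \( \mathbf{c} \) become hyperplanes orthogonal to \( \mathbf{u} \). I would begin with the homeomorphism. Continuity is clear since each coordinate \( \mathbf{x} \mapsto \| \mathbf{x} - \mathbf{c}_i \|^2 \) is a polynomial, and \( E^d = \Phi [ \mathbb{H}^d ] \neq \emptyset \) because \( \mathbb{H}^d \neq \emptyset \). For injectivity and a continuous inverse the key point is that the differences \( \| \mathbf{x} - \mathbf{c}_i \|^2 - \| \mathbf{x} - \mathbf{c}_j \|^2 = \| \mathbf{c}_i \|^2 - \| \mathbf{c}_j \|^2 - 2 \, \mathbf{x} \boldsymbol{\cdot} ( \mathbf{c}_i - \mathbf{c}_j ) \) are affine in \( \mathbf{x} \) with linear parts \( \mathbf{c}_i - \mathbf{c}_j \); since \( \mathbf{c}_1 , \dots , \mathbf{c}_d \) are in general position these vectors span the direction of \( \R^{d - 1} \times \set{0} \), so from \( \Phi ( \mathbf{x} ) \) one recovers, affinely, the orthogonal projection \( \mathbf{x}' \) of \( \mathbf{x} \) onto \( \R^{d - 1} \times \set{0} \), then the single quantity \( x_d^2 = \| \mathbf{x} - \mathbf{c}_i \|^2 - \| \mathbf{x}' - \mathbf{c}_i \|^2 \), and finally \( x_d \) itself, the constraint \( x_d > 0 \) defining \( \mathbb{H}^d \) selecting the correct root. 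This exhibits a continuous inverse on the open set where the recovered value of \( x_d^2 \) is positive, whence \( \Phi \) is a homeomorphism and \( E^d \) is open (alternatively, openness follows from invariance of domain applied to the continuous injection \( \Phi \) on the open set \( \mathbb{H}^d \)).

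For the on-axis centers, for \( r > 0 \) one has \( \mathbf{x} \in \mathbb{S} ( \R^d ; \mathbf{c}_i , r ) \cap \mathbb{H}^d \IFF \| \mathbf{x} - \mathbf{c}_i \|^2 = r^2 \IFF \Phi ( \mathbf{x} ) \in H_i ( r^2 ) \), so \( \Phi \) carries \( \mathbb{S} ( \mathbf{c}_i , r ) \cap \mathbb{H}^d \) bijectively onto \( E^d \cap H_i ( r^2 ) \); as \( r \) ranges over \( ( 0 ; + \infty ) \) these exhaust the hyperplanes orthogonal to \( \mathbf{e}_i \) meeting \( E^d \subseteq \R^d_+ \), and since \( \Phi \) is a bijection, \( X \) is an \( \aleph_ \delta \)-spray about \( \mathbf{c}_i \) precisely when \( \Phi [ X ] \) meets every such hyperplane in \( < \aleph_ \delta \) points. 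For the off-axis center \( \mathbf{c} \) I would apply Theorem~\ref{th:Ivan} with \( \mathbf{p}_i = \mathbf{c}_i \), \( \mathbf{q} = \mathbf{c} \), and the chosen \( \mathbf{u} \in \mathcal{U} ( \mathbf{c} ) \setminus \set{ \mathbf{0} } \) to obtain \( b , c \in \R \) with \( \sum_i u_i \| \mathbf{x} - \mathbf{c}_i \|^2 + b \| \mathbf{x} - \mathbf{c} \|^2 + c = 0 \) for all \( \mathbf{x} \), so that the extended point \( ( \Phi ( \mathbf{x} ) , \| \mathbf{x} - \mathbf{c} \|^2 ) \) always lies on the hyperplane \( P \subseteq \R^{ d + 1 } \) with normal \( ( \mathbf{u} , b ) \). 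The sphere \( \| \mathbf{x} - \mathbf{c} \|^2 = \rho^2 \) then maps under \( \Phi \) onto \( E^d \cap L ( \mathbf{u} , \rho^2 ) \), a hyperplane orthogonal to \( \mathbf{u} \), and letting \( \rho \) vary over \( ( 0 ; + \infty ) \) gives the whole family; the transfer of the spray condition follows exactly as before.

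The step I expect to be most delicate is the homeomorphism---in particular justifying that \( E^d \) is open and that the positive-root branch is the right one---together with one small but essential algebraic check in the off-axis case: that \( b = - \sum_i u_i \neq 0 \), so that the projection forgetting the last coordinate is a bijection \( P \to \R^d \) and the hyperplanes \( L ( \mathbf{u} , \rho^2 ) \) are genuine. This holds because \( b = 0 \) would give \( \sum_i u_i \mathbf{c}_i = \mathbf{0} \) with \( \sum_i u_i = 0 \), a nontrivial affine dependence among the affinely independent points \( \mathbf{c}_1 , \dots , \mathbf{c}_d \), forcing \( \mathbf{u} = \mathbf{0} \). I would also record that the identity of Theorem~\ref{th:Ivan}, though stated for \( \mathbf{x} \in \R^{ d - 1} \), holds verbatim for \( \mathbf{x} \in \mathbb{H}^d \subseteq \R^d \), since its proof uses only the cancellation of \( \| \mathbf{x} \|^2 \) and the defining relation \( \sum_i u_i ( \mathbf{c}_i - \mathbf{c} ) = \mathbf{0} \) of \( \mathcal{U} ( \mathbf{c} ) \).
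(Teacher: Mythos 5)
Your proposal is correct and takes essentially the same route as the paper, whose proof of this theorem is exactly the construction preceding it: the map \( \Phi \) of~\eqref{eq:Phi}, the correspondence between spheres centered at \( \mathbf{c}_i \) and hyperplanes \( H_i ( r^2 ) \), and Theorem~\ref{th:Ivan} applied with \( \mathbf{q} = \mathbf{c} \) for the off-axis center. You additionally make explicit two points the paper glosses over --- the injectivity/openness argument for \( \Phi \) (via the affine recovery of the projection of \( \mathbf{x} \) and the positive root for \( x_d \), or invariance of domain) and the verification that \( b = - \sum_i u_i \neq 0 \), which is what makes the hyperplanes \( L ( \mathbf{u} , k ) \) genuinely sweep out all hyperplanes orthogonal to \( \mathbf{u} \) that meet \( E^d \) --- and both are handled correctly.
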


The following picture when \( d = 2 \) may help to visualize the previous construction.
\[
\begin{tikzpicture}
\fill[gray!20] (-2,0) -- (3.7,0)--(3.7,3)--(-2,3);
\draw (1.5,0) arc [start angle=0, end angle=180, radius=1.5cm];
\draw[thick,dotted] (2,0) arc [start angle=0, end angle=180, radius=1.2cm];
\draw[dashed] (3.4,0) arc [start angle=0, end angle=180, radius=1.3cm];
\filldraw (0,0) circle [radius=0.5pt] node[anchor=north] {$\mathbf{c}_0$};
\filldraw (1,0) circle [radius=0.5pt] node[anchor=north] {$\mathbf{c}_1$};
\filldraw (2.1,0) circle [radius=0.5pt] node[anchor=north] {$\mathbf{c}$};
\node at (0.85 , 2.5) {\( \mathbb{H} \)};
\node at (5, 2) {\( \overset{\Phi}{\leadsto} \)};
\fill[gray!20] (7,4) .. controls (5,0) and (6,-1) .. (10,1) -- (7,4);
\clip (7,4) .. controls (5,0) and (6,-1) .. (10,1) -- (7,4);
\node at (8.2, 2.2) {\( E \)};
\draw[] (7,0) -- (7,4);
\draw[thick,dotted] (6,1) -- (10,1);
\draw[dashed] (6,1.5) -- (11,4);
\end{tikzpicture}
\]

We can now sketch Schmerl's argument that if the plane is covered with three sprays with collinear centers, then \( \CH \) holds.
First of all we may assume that the centers lie on the \( x \)-axis so that the three sprays cover \( \mathbb{H}^2 \).
Then \( E^2 \) can be covered with three sets \( A_1 , A_2 , A_3 \) such that the intersection of \( A_i \) with any line orthogonal to some vector \( \mathbf{u}_i \) is finite (\( i = 1 , 2 , 3 \)), and by a theorem of Bagemihl~\cite{Bagemihl:1961ve} this implies \( \CH \).
Our strategy is to replace lines in \( \R^2 \) with hyperplanes in \( \R^d \), and this is the topic of the next section.

\section{Hyperplane sections}\label{sec:Hyperplane-sections}
Over the last century several results were obtained, establishing connections between the size of the continuum and elementary properties of the euclidean spaces.
The first such result is Sierpiński's theorem form 1919, asserting that \( \CH \) is equivalent to a covering of the plane with two sets such that the intersection of the first set with any vertical line is countable and the intersection of every horizontal line with the second set is countable (see Theorem~\ref{th:Sierpinski}\ref{th:Sierpinski-a}).
In~\cite{Sierpinski:1951aa} Sierpiński sharpened his previous result by replacing ``countable'' with ``finite'', but at the cost of increasing the dimension: \( \CH \) is equivalent to a decomposition \( A_1 , A_2 , A_3 \) of \( \R^3 \) such that the intersection of any line with direction \( \mathbf{e}_i \) with \( A_i \) is finite (Theorem~\ref{th:Sierpinski}\ref{th:Sierpinski-b}), and this was quickly generalized by Kuratowski to higher dimensions~\cite{Kuratowski:1951aa}: \( 2^{\aleph_0 } \leq \aleph_n \) if and only if there is a decomposition \( A_1 , \dots , A_{ n + 2} \) of \( \R^{ n + 2 } \) such that every line parallel to \( \mathbf{e}_i \) has finite intersection with \( A_i \).
In the sixties of the last century, Bagemihl and Davies showed that Kuratowski's result could be proved for \( \R^2 \) by taking intersections with lines of prescribed directions~\cite{Bagemihl:1960ti,Bagemihl:1961ve,Davies:1962ly,Davies:1963zr,Bagemihl:1968up}.
A line is a hyperplane in \( \R^2 \), so Sierpiński's result form 1919 could be stated as: if \( d = 2 \) then \( \CH \) is equivalent to a decomposition of \( \R^d \) into \( A_1 , \dots , A_d \) such that every hyperplane orthogonal to \( \mathbf{e}_i \) has countable intersection with \( A_i \).
This result holds for all \( d \geq 2 \); Sierpiński states and proves it for \( d = 3 \) in~\cite{Sierpinski:1951aa}, see Corollary~\ref{cor:Sierpinski51} below, and observes that the result for \( \R^3 \) is false if ``countable'' is replaced by ``finite''.
Erd\H{o}s, Jackson, and Mauldin prove that \( \CH \) is equivalent to \( \R^3 \) being decomposable in five pieces \( A_1 , \dots , A_5 \) so that every plane orthogonal to \( \mathbf{u}_i \) has finite intersection with \( A_i \), where \( \mathbf{u}_1 , \dots , \mathbf{u}_5 \) are in general position in \( \R^3 \), but no decomposition exists if we just allow four vectors and four sets~\cite[Corollary 6, Lemma 1]{Erdos:1994yq}. 
In that paper it is stated (but not proved) that an analogous result holds if \( \CH \) is weakened to \( 2^{\aleph_0} \leq \aleph_n \).
Since we need a detailed analysis of the positions of the various (hyper)planes we will state and prove these results in full generality below.
This section is devoted to the following

\begin{problem}\label{prob:hyperplanes}
Given \( \mathbf{u}_1 , \dots , \mathbf{u}_n \) distinct, non-zero vectors of \( \R^d \), what conditions on the cardinality of \( \R \) are equivalent to the existence of \( A_1 , \dots , A_n \) covering \( \R^d \) such that each \( A_i \cap H_{ \mathbf{u}_i } ( \mathbf{p} ) \) is finite (or countable)?
\end{problem}

First of all we need a result allowing us to replace the standard basis of \( \R^d \) with any basis.

\begin{lemma}\label{lem:changecoordinatestogetorthogonalplanes}
Suppose \( \mathbf{u}_1 , \dots , \mathbf{u}_d \in \R^d \) are linearly independent, and \( d \geq 2 \).
There is a linear isomorphism \( T \colon \R^d \to \R^d \) that maps every hyperplane orthogonal to \( \mathbf{u}_i \) to a hyperplane orthogonal to \( \mathbf{e}_i \), and conversely, for any \( i \leq d \).
\end{lemma}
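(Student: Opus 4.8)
The plan is to exploit the fact that the hyperplanes orthogonal to a fixed non-zero vector \( \mathbf{u} \) are precisely the level sets of the linear functional \( \mathbf{x} \mapsto \mathbf{x} \boldsymbol{\cdot} \mathbf{u} \): indeed \( H_{\mathbf{u}} ( \mathbf{p} ) = \setof{ \mathbf{x} \in \R^d }{ \mathbf{x} \boldsymbol{\cdot} \mathbf{u} = \mathbf{p} \boldsymbol{\cdot} \mathbf{u} } \), and as \( \mathbf{p} \) varies the constant \( \mathbf{p} \boldsymbol{\cdot} \mathbf{u} \) ranges over all of \( \R \). In the same way the hyperplanes orthogonal to \( \mathbf{e}_i \) are exactly the sets \( H_i ( c ) = \setof{ \mathbf{y} }{ y_i = c } \), i.e.\ the level sets of the \( i \)-th coordinate functional. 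Thus it suffices to produce a linear isomorphism \( T \) whose \( i \)-th coordinate functional agrees with \( \mathbf{x} \mapsto \mathbf{x} \boldsymbol{\cdot} \mathbf{u}_i \); that is, \( ( T \mathbf{x} )_i = \mathbf{x} \boldsymbol{\cdot} \mathbf{u}_i \) for every \( \mathbf{x} \) and every \( i \leq d \).

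First I would take \( T \) to be the linear map given by the matrix whose \( i \)-th row is the (row) vector \( \mathbf{u}_i \) (that is \( \mathbf{u}_i^{\top} \)), so that by construction the \( i \)-th coordinate of \( T \mathbf{x} \) equals \( \mathbf{u}_i \boldsymbol{\cdot} \mathbf{x} \). Since \( \mathbf{u}_1 , \dots , \mathbf{u}_d \) are linearly independent, this matrix has independent rows and is therefore invertible, so \( T \) is a linear isomorphism of \( \R^d \). Next I would verify the hyperplane correspondence directly: for \( \mathbf{p} \in \R^d \), setting \( c = \mathbf{p} \boldsymbol{\cdot} \mathbf{u}_i \) one has \( \mathbf{x} \in H_{\mathbf{u}_i} ( \mathbf{p} ) \IFF \mathbf{u}_i \boldsymbol{\cdot} \mathbf{x} = c \IFF ( T \mathbf{x} )_i = c \), whence, using that \( T \) is a bijection, \( T [ H_{\mathbf{u}_i} ( \mathbf{p} ) ] = \setof{ \mathbf{y} }{ y_i = c } = H_i ( c ) \), a hyperplane orthogonal to \( \mathbf{e}_i \).

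For the ``and conversely'' clause I would read the same computation backwards: it shows that \( T \) carries the family of all hyperplanes orthogonal to \( \mathbf{u}_i \) \emph{bijectively} onto the family of all hyperplanes orthogonal to \( \mathbf{e}_i \). Indeed every \( H_i ( c ) \) is the image of \( H_{\mathbf{u}_i} ( \mathbf{p} ) \) for any \( \mathbf{p} \) with \( \mathbf{p} \boldsymbol{\cdot} \mathbf{u}_i = c \), so \( T^{-1} \) sends each hyperplane orthogonal to \( \mathbf{e}_i \) to one orthogonal to \( \mathbf{u}_i \).

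I do not expect a genuine obstacle here, since the lemma is essentially a change-of-basis bookkeeping statement. The one point worth stating carefully is that \( T \) need not be an \emph{orthogonal} transformation---despite the name of the lemma---because requiring the image hyperplanes to be orthogonal to the \( \mathbf{e}_i \) constrains only their \emph{normal directions}, and that is exactly what the row construction arranges. The only mild subtlety is to check that each hyperplane is mapped \emph{onto} (not merely into) its target and that the correspondence is surjective in both directions; both facts follow immediately from the bijectivity of \( T \).
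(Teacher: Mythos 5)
Your proposal is correct and follows essentially the same route as the paper: both use the linear map whose matrix has the \( \mathbf{u}_i \) as rows, and both verify the claim via the identity \( ( T \mathbf{x} )_i = \mathbf{u}_i \boldsymbol{\cdot} \mathbf{x} \). Your write-up is if anything slightly more complete, since you treat arbitrary hyperplanes as level sets directly and check surjectivity of the correspondence, whereas the paper reduces to hyperplanes through the origin and appeals to preservation of translations.
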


\begin{proof}
Any linear \( T \) preserves translations, so it is enough to consider hyperplanes passing through the origin.
In other words it is enough to find an injective \( T \) such that \( T [ H_{\mathbf{u}_i } ( \mathbf{0} ) ] \subseteq H_{\mathbf{e}_i } ( \mathbf{0} ) \).
For \( 1 \leq i \leq d \) let \( \mathbf{u}_i = ( u_{i , 1} , \dots , u_{i , d} ) \).
Consider the \( d \times d \)-matrix \( M = ( u_{ i , j } )^{\mathrm{t}}\) that is the transpose of the matrix of the linear transformation such that \( \mathbf{e}_i \mapsto \mathbf{u}_i \) for \( i = 1 , \dots , d \).
The matrix \( M \) is invertible, as the \( \mathbf{u}_i \)s are linearly independent, so the transformation \( T \) induced by \( M \) is an isomorphism.
If \( \mathbf{x} = ( x_1 , \dots , x_d ) \in H_{\mathbf{u}_i} ( \mathbf{0} ) \), then \( \mathbf{u}_i \boldsymbol{\cdot} \mathbf{x}^{\mathrm{t}}= \sum _{ j =1 }^d u_{i , j \, } x_j = 0 \), and this is the \( i \)-th row of \( M \mathbf{x}^{\mathrm{t}} \).
We obtain that \( M \mathbf{x}^{\mathrm{t}} \in H_{\mathbf{e}_i} ( \mathbf{0}) \).
Therefore \( T [ H_{\mathbf{u}_i} ( \mathbf{0} ) ] \subseteq H_{\mathbf{e}_i} ( \mathbf{0} ) \) for all \( i < d \), and this is what we had to prove.
\end{proof}

\begin{lemma}\label{lem:setofdifferenceinabeliangroup}
Let \( \varepsilon > 0 \).
For all \( X \subseteq \R \) with \( \aleph_0 \leq \card{X} < \kappa \leq 2^{\aleph_0} \), there is \( S \subseteq ( - \varepsilon ; \varepsilon ) \) such that \( \card{S} = \kappa \) and \( ( X - X ) \cap ( S - S ) = \set{ 0 } \).
\end{lemma}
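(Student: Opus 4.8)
The plan is to reduce the difference condition to a statement about cosets of a single subgroup. First I would let $G$ be the subgroup of $(\R, +)$ generated by $X$. Since $X$ is infinite, $\card{G} = \card{X} < \kappa$, and clearly $X - X \subseteq G$. Thus it suffices to produce $S \subseteq (-\varepsilon; \varepsilon)$ with $\card{S} = \kappa$ whose distinct elements lie in distinct cosets of $G$: if $s, s' \in S$ are distinct then $s - s' \notin G$, hence $s - s' \notin X - X$, and therefore $(X - X) \cap (S - S) = \set{0}$ (the only common element being $0 = s - s$).

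It then remains to find $\kappa$-many cosets of $G$ that meet the interval $I \coloneqq (-\varepsilon; \varepsilon)$; picking one representative of $I$ from each such coset yields the desired $S$. Here the key computation is a counting argument. The cosets of $G$ partition $\R$, and each coset, being a translate of $G$, meets $I$ in a set of size at most $\card{G} = \card{X} < \kappa$. Writing $\lambda$ for the number of cosets meeting $I$, and noting that $\card{I} = 2^{\aleph_0} \geq \kappa$, I would argue by contradiction: if $\lambda < \kappa$, then since $I$ is the union of these $\lambda$ pieces we would get $2^{\aleph_0} = \card{I} \leq \lambda \cdot \card{X} < \kappa \leq 2^{\aleph_0}$, where the strict inequality uses that $\aleph_0 \leq \card{X} < \kappa$ forces $\kappa$ to be infinite and the product of two cardinals below $\kappa$ to stay below $\kappa$. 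This contradiction gives $\lambda \geq \kappa$, so I may select $\kappa$-many cosets meeting $I$ and one point of $I$ from each.

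The main obstacle is conceptual rather than technical: one has to notice that the hypothesis should be digested by passing to the subgroup generated by $X$, so that $X - X$ is absorbed into $G$ and ``lying in different cosets'' becomes a clean sufficient condition for the differences to avoid $X - X$. After that the problem reduces to routine cardinal arithmetic. The only point requiring a little care is verifying that each coset meets the bounded interval $I$ in at most $\card{X}$ points, which is immediate because translation by a fixed real is a bijection of $\R$ carrying $G$ onto the coset, so the coset has cardinality $\card{G} = \card{X}$; combined with the observation that $\card{X} < \kappa \leq 2^{\aleph_0}$ forces $\card{X} < 2^{\aleph_0}$, this is exactly what makes the counting go through.
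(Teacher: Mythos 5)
Your proposal is correct and follows essentially the same route as the paper: both pass to the subgroup $G = \tilde{X}$ generated by $X$, observe that $X - X \subseteq G$, and take $S$ to consist of $\kappa$-many points of $(-\varepsilon;\varepsilon)$ lying in pairwise distinct cosets of $G$. Your explicit counting argument (each coset meets the interval in at most $\card{X} < \kappa$ points, so at least $\kappa$-many cosets meet it) just spells out the step the paper compresses into the assertion that a transversal for $\R/\tilde{X}$ meets $(-\varepsilon;\varepsilon)$ in a set of size $2^{\aleph_0}$.
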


\begin{proof}
Let \( \tilde{X} \) be the subgroup of \( ( \R , + ) \) generated by \( X \).
Let \( T \subseteq \R \) be a transversal for the quotient, i.e.~a set picking exactly one element from each coset of \( \R / \tilde{X} \).
Then \(  2^{\aleph_0} = \card{ \R / \tilde{X} } = \card{T} = \card{T \cap ( - \varepsilon ; \varepsilon )} \).
Let \( S \subseteq T \cap ( - \varepsilon ; \varepsilon ) \) be of size \( \kappa \).

If \( s_1 - s_2 = x_1 - x_2 \) for some \( s_1 , s_2 \in S \) and \( x_1 , x_2 \in X \), since \( x_1 - x_2 \in \tilde{X} \) this means that the cosets \( s_1 + \tilde{X} \) and \( s_2 + \tilde{X} \) are the same, so \( s_1 = s_2 \) by definition of \( S \).
Therefore \( ( S - S ) \cap ( X - X ) = \set{ 0 } \). 
\end{proof}

The next result asserts that if the sets \( A_i \subseteq \R^d \) have small intersection with the hyperplanes orthogonal to \( \mathbf{u}_i \), and if \( 2^{\aleph_0} \) is large enough, then there is a set \( Z \subseteq \R^d \) such that no translate of it can be covered by the \( A_i \)s---in particular the \( A_i \)s do not cover \( \R^d \).

\begin{theorem}\label{th:EJM-Rd}
Let \( \delta \in \On \), \( d \geq 3 \) and let \( N = 2 ( d - 1 ) \).
Suppose \( \mathbf{u}_1 , \dots , \mathbf{u}_N \) are distinct, non-zero vectors of \( \R^d \).
Suppose \( A_1 , \dots , A_N \) are subsets of \( \R^d \) such that for all \( \mathbf{p} \in \R^d \), and for all \( 1 \leq i \leq N \)
\[
\card{ H_{\mathbf{u}_i} ( \mathbf{p} ) \cap A_i } < \aleph_ \delta . 
\]
If \( 2^{\aleph_0 } > \aleph_{ \delta } \) then for every \( \varepsilon > 0 \) there is \( Z \subseteq ( - \varepsilon ; \varepsilon ) ^ d \) of size \( \aleph_{ \delta + 1 } \) such that 
\[
\FORALL{ \mathbf{p} \in \R^d } ( \mathbf{p} + Z \nsubseteq \textstyle \bigcup_{i = 1 }^N A_i ) . 
\]
\end{theorem}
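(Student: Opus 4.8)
The plan is to realise $Z$ as a union of $\aleph_\delta$ parallel lines (``columns'') in a single well-chosen direction $\mathbf{w}$, arranging by the difference-set Lemma~\ref{lem:setofdifferenceinabeliangroup} that every translate of every column meets each $A_i$ in very few points. Then for each $\mathbf{p}$ the part of $\mathbf{p}+Z$ lying in $\bigcup_i A_i$ will have size at most $\aleph_\delta$, which is strictly less than $\aleph_{\delta+1}=\card{Z}$, so an uncovered point survives.

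For each $i$ write $B_i:=\setof{\mathbf{u}_i\boldsymbol{\cdot}\mathbf{a}}{\mathbf{a}\in A_i}$ for the set of $\mathbf{u}_i$-levels actually hit by $A_i$, and call $i$ \emph{tame} if $\card{B_i}<2^{\aleph_0}$ and \emph{wild} otherwise. The point of this split is that a column lying inside one hyperplane $H_{\mathbf{u}_i}(\mathbf{q})$ meets $A_i$ in $<\aleph_\delta$ points no matter what (by hypothesis), whereas a column transverse to the $\mathbf{u}_i$-hyperplanes can be tamed only when $i$ is tame, using the difference lemma. In the main case I assume the wild directions do not span $\R^d$ and pick $\mathbf{w}\neq\mathbf{0}$ orthogonal to all of them. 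Set $c_i:=\mathbf{u}_i\boldsymbol{\cdot}\mathbf{w}$, so $c_i=0$ for wild $i$, and let $D:=\bigcup\setof{c_i^{-1}B_i}{i\text{ tame},\,c_i\neq0}$, a finite union of sets of size $<2^{\aleph_0}$, hence $\card{D}<2^{\aleph_0}$. By Lemma~\ref{lem:setofdifferenceinabeliangroup} (applied with $\kappa=2^{\aleph_0}$ when $D$ is infinite and trivially otherwise, then passing to a subset) fix $S\subseteq(-\eta;\eta)$ with $\card{S}=\aleph_{\delta+1}$ and $(S-S)\cap(D-D)=\set{0}$, so that $S$ meets every translate of every $c_i^{-1}B_i$ in at most one point; fix also $L\subseteq H_{\mathbf{w}}(\mathbf{0})\cap(-\eta;\eta)^d$ with $\card{L}=\aleph_\delta$, and put
\[
Z:=\setof{\mathbf{v}+s\mathbf{w}}{\mathbf{v}\in L,\ s\in S}.
\]
Because $\mathbf{w}\notin H_{\mathbf{w}}(\mathbf{0})$ this representation is unique, so $\card{Z}=\aleph_{\delta+1}$, and for small $\eta$ we have $Z\subseteq(-\varepsilon;\varepsilon)^d$.

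Now fix $\mathbf{p}$ and $i$ and bound $\setof{z\in Z}{\mathbf{p}+z\in A_i}$ column by column. For a fixed $\mathbf{v}\in L$, if $c_i=0$ the whole column $\setof{\mathbf{v}+s\mathbf{w}}{s\in S}$ sits in the single hyperplane $H_{\mathbf{u}_i}(\mathbf{p}+\mathbf{v})$ and so contributes $<\aleph_\delta$ points; if $c_i\neq0$ (forcing $i$ tame) then $\mathbf{p}+\mathbf{v}+s\mathbf{w}\in A_i$ requires $s\in c_i^{-1}\bigl(B_i-\mathbf{u}_i\boldsymbol{\cdot}(\mathbf{p}+\mathbf{v})\bigr)$, a translate of $c_i^{-1}B_i\subseteq D$, which $S$ meets in at most one point, so the column contributes at most one point. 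Summing over the $\aleph_\delta$ columns gives $\card{\setof{z\in Z}{\mathbf{p}+z\in A_i}}\le\aleph_\delta$, and since there are only finitely many $i$ the covered set has size $\le\aleph_\delta$. As $\aleph_{\delta+1}$ is a successor cardinal, hence regular, and $\card{Z}=\aleph_{\delta+1}$, some $z\in Z$ satisfies $\mathbf{p}+z\notin\bigcup_i A_i$, which is exactly $\mathbf{p}+Z\nsubseteq\bigcup_i A_i$.

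The hard part is precisely the hypothesis that lets me choose $\mathbf{w}$: it is available only when the wild directions fail to span $\R^d$. If they span, a single transverse wild direction can swallow an entire column, and the same obstruction defeats any free subspace used in place of $\R\mathbf{w}$; so this case genuinely needs a new idea. I expect to handle it by induction on $d$, projecting along a wild $\mathbf{u}_{i_0}$ onto $G=H_{\mathbf{u}_{i_0}}(\mathbf{0})$: there $A_{i_0}\cap G$ lies in one hyperplane and the remaining traces $A_i\cap G$ still meet the $G$-hyperplanes orthogonal to the projected directions in $<\aleph_\delta$ points. The delicate steps — which I anticipate being the real technical core — are to make the projection collapse the number of relevant directions by the full amount needed to re-enter the statement in dimension $d-1$ (for instance by first merging directions that become parallel under the projection, in the spirit of Lemma~\ref{lem:projectiogeneralposition}), and then to lift the lower-dimensional witness back to a $Z\subseteq\R^d$ of the required size.
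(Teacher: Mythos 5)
Your first case is correct and self-contained: when the ``wild'' directions do not span \( \R^d \), the column construction \( Z = L + \setof{ s \mathbf{w} }{ s \in S } \) together with Lemma~\ref{lem:setofdifferenceinabeliangroup} does produce a set of size \( \aleph_{ \delta + 1 } \) each of whose translates meets \( \bigcup_i A_i \) in at most \( \aleph_ \delta \) points; this even subsumes the paper's own easy case (where the \( \mathbf{u}_i \) themselves fail to span). The problem is that the entire content of the theorem sits in the case you leave open, and the fallback you sketch cannot work as described. After projecting along one wild direction onto \( G \cong \R^{ d - 1 } \) you are left with \( N - 1 = 2 ( d - 1 ) - 1 = 2 ( d - 2 ) + 1 \) directions, while the inductive hypothesis in dimension \( d - 1 \) tolerates only \( 2 ( d - 2 ) \). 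This is not a bookkeeping nuisance: by Theorem~\ref{th:EJM2} and Example~\ref{xmp:well-spread} (equivalently, the \( n = 1 \) column of Figure~\ref{fig:1}), if \( 2^{\aleph_0} = \aleph_{ \delta + 1 } \)---which is consistent with the hypothesis \( 2^{\aleph_0} > \aleph_ \delta \)---then \( \R^{ d - 1 } \) \emph{can} be covered by \( 2 ( d - 2 ) + 1 \) sets each meeting the hyperplanes orthogonal to a prescribed direction in a set of size \( < \aleph_ \delta \). So the intermediate statement your induction would need is consistently false, and nothing forces the projection to identify two of the remaining directions. There are further unresolved issues (a hyperplane of \( G \) orthogonal to the projection of \( \mathbf{u}_i \) is a slice of a hyperplane of \( \R^d \) orthogonal to that projection, not to \( \mathbf{u}_i \), so neither \( \pi [ A_i ] \) nor the slices \( A_i \cap ( \mathbf{q} + G ) \) feed cleanly into the inductive statement with a single witness \( Z \) independent of \( \mathbf{q} \)), but the cardinality mismatch alone is fatal.

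For comparison, the paper does not split on tame versus wild at all; in the spanning case it exploits the arithmetic \( N = d + ( d - 2 ) \) directly. After reducing to the case where \( \mathbf{u}_1 , \dots , \mathbf{u}_d \) is the standard basis, it finds a single vector \( \mathbf{v} \) orthogonal to all of \( \mathbf{u}_{ d + 1 } , \dots , \mathbf{u}_N \) \emph{and} to one basis vector \( \mathbf{e}_j \), yet not collinear with any \( \mathbf{e}_i \); it then takes \( Z = \setof{ s \mathbf{v} }{ s \in S } + \prod_{ i = 1 }^d X_i \) with \( \card{ X_1 } = \aleph_{ \delta + 1 } \) and \( \card{ X_i } = \aleph_ \delta \) otherwise, and eliminates the sets one at a time by a cascade of pigeonhole arguments: first \( A_j \) (using the vanishing \( j \)-th coordinate of \( \mathbf{v} \)), then \( A_{ d + 1 } , \dots , A_N \) (using that the \( \mathbf{v} \)-columns lie inside single \( \mathbf{u}_k \)-hyperplanes), then \( A_1 , \dots , A_{ d - 1 } \) in turn. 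That successive-elimination mechanism is what your argument is missing; the tame/wild dichotomy does not substitute for it.
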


Before proving this let us draw a few corollaries.
With the same notation as before:

\begin{theorem}\label{th:EJM-Rd-1}
Let \( \delta \in \On \), \( d \geq 3 \) and \( N = 2 ( d - 1 ) \).
Suppose \( \mathbf{u}_1 , \dots , \mathbf{u}_N \) are non-zero vectors of \( \R^d \).
Suppose \( D \subseteq \R^d \) is such that \( \Int ( D ) \neq \emptyset \).
If \( A_1 , \dots , A_N \) cover \( D \) and for all \( \mathbf{p} \in \R^d \) and for all \( 1 \leq i \leq N \)
\[
\card{ H_{\mathbf{u}_i} ( \mathbf{p} ) \cap A_i } < \aleph_ \delta .
\]
Then \( 2^{\aleph_0} \leq \aleph_{ \delta } \).
\end{theorem}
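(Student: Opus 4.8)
The plan is to deduce Theorem~\ref{th:EJM-Rd-1} directly from Theorem~\ref{th:EJM-Rd} by contraposition. The hypotheses on the sets \( A_1, \dots, A_N \) and on the vectors \( \mathbf{u}_1, \dots, \mathbf{u}_N \) are literally identical in the two statements, so all of the combinatorial and set-theoretic content is already packaged into Theorem~\ref{th:EJM-Rd}. Assuming \( 2^{\aleph_0} > \aleph_\delta \) for contradiction, that theorem hands us, for any \( \varepsilon > 0 \), a set \( Z \subseteq (-\varepsilon;\varepsilon)^d \) of size \( \aleph_{\delta+1} \) no translate of which is contained in \( \bigcup_{i=1}^N A_i \). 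The only thing left to do is to use the hypothesis \( \Int(D) \neq \emptyset \) to situate one such translate \emph{inside} \( D \), where the covering assumption produces the contradiction.

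Concretely, I would proceed as follows. Suppose toward a contradiction that \( 2^{\aleph_0} > \aleph_\delta \). Since \( \Int(D) \neq \emptyset \), fix a point \( \mathbf{q} \in \Int(D) \) and a radius \( \rho > 0 \) such that the open ball of radius \( \rho \) centered at \( \mathbf{q} \) is contained in \( D \). Choose \( \varepsilon > 0 \) small enough that \( \varepsilon \sqrt{d} < \rho \). Then every point of the open cube \( \mathbf{q} + (-\varepsilon;\varepsilon)^d \) lies within distance \( \varepsilon\sqrt{d} < \rho \) of \( \mathbf{q} \), so \( \mathbf{q} + (-\varepsilon;\varepsilon)^d \subseteq D \). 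Now apply Theorem~\ref{th:EJM-Rd} with this \( \varepsilon \) to obtain \( Z \subseteq (-\varepsilon;\varepsilon)^d \) of size \( \aleph_{\delta+1} \) with \( \mathbf{p} + Z \nsubseteq \bigcup_{i=1}^N A_i \) for all \( \mathbf{p} \in \R^d \). Specializing to \( \mathbf{p} = \mathbf{q} \), the translate \( \mathbf{q} + Z \) is contained in \( \mathbf{q} + (-\varepsilon;\varepsilon)^d \subseteq D \); but \( A_1, \dots, A_N \) cover \( D \), so \( \mathbf{q} + Z \subseteq \bigcup_{i=1}^N A_i \), contradicting the defining property of \( Z \). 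Hence \( 2^{\aleph_0} \leq \aleph_\delta \).

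I expect no genuine obstacle in this argument, since every hard step is already contained in Theorem~\ref{th:EJM-Rd}. The single new ingredient is the elementary estimate \( \varepsilon\sqrt{d} < \rho \), which lets the cube be inscribed in the ball. The one point I would be careful to state explicitly is that Theorem~\ref{th:EJM-Rd} quantifies over \emph{all} translates \( \mathbf{p} \), so we are entitled to select the particular translate landing inside \( D \); the deduction would break down if \( Z \) merely resisted covering by some translates rather than all of them.
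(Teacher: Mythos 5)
Your proof is correct and follows essentially the same route as the paper: assume \( 2^{\aleph_0} > \aleph_\delta \), choose \( \varepsilon \) small enough that some translate of the cube \( (-\varepsilon;\varepsilon)^d \) sits inside \( D \), and apply Theorem~\ref{th:EJM-Rd} at that particular translate to contradict the covering hypothesis. The only difference is that you spell out the elementary inscribed-cube estimate \( \varepsilon\sqrt{d} < \rho \), which the paper leaves implicit.
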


\begin{proof}
Towards a contradiction, suppose \( 2^{\aleph_0} \geq \aleph_{ \delta + 1 } \).
Let \( \varepsilon > 0 \) be small enough so that \( \mathbf{p} + ( - \varepsilon ; \varepsilon )^d \subseteq D \) for some \( \mathbf{p} \in \R^d \), and, towards a contradiction, suppose \( A_1 , \dots , A_N \) are as in the statement.
Let \( Z \subseteq ( - \varepsilon ; \varepsilon )^d \) be as in Theorem~\ref{th:EJM-Rd}: then \( \mathbf{p} + Z \) is contained in \( D \) but on other hand it is not contained in \( A_1 \cup \dots \cup A_N \), a contradiction.
\end{proof}

The presence of the set \( D \) in the statement of Theorem~\ref{th:EJM-Rd-1} may seem unwarranted right now, but it will be crucial for the results in Section~\ref{sec:The-main-results}.
For the time being the reader can safely replace \( D \) with \( \R^d \) without losing much.

\begin{theorem}\label{th:EJM-Rd-2}
For all \( d \geq 3 \) and all \( D \subseteq \R^d \) such that \( \emptyset \neq \Int ( D ) \), the following are equivalent:
\begin{enumerate}[label={\upshape (\alph*)}]
\item\label{th:EJM-Rd-2-a}
\( \CH \).
\item\label{th:EJM-Rd-2-b}
For all non-zero vectors \( \mathbf{u}_1 , \dots , \mathbf{u}_d \) spanning \( \R^d \), there are \( A_1 , \dots , A_d \) covering \( D \) such that \( H_{ \mathbf{u}_i } ( \mathbf{p} ) \cap A_i \) is countable for all \( \mathbf{p} \in \R^d \) and \( 1 \leq i \leq d \).
\item\label{th:EJM-Rd-2-c}
There are non-zero vectors \( \mathbf{u}_1 , \dots , \mathbf{u}_{ 2 d - 2 } \) and there are \( A_1 , \dots , A_{ 2 d - 2 } \) covering \( D \) such that \( H_{ \mathbf{u}_i } ( \mathbf{p} ) \cap A_i \) is countable for all \( \mathbf{p} \in \R^d \) and \( 1 \leq i \leq 2 d - 2 \).
\end{enumerate} 
\end{theorem}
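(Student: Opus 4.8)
The plan is to prove the cycle of implications \ref{th:EJM-Rd-2-a} \( \IMPLIES \) \ref{th:EJM-Rd-2-b} \( \IMPLIES \) \ref{th:EJM-Rd-2-c} \( \IMPLIES \) \ref{th:EJM-Rd-2-a}. The content is concentrated in the first and last steps, while the middle one is a padding argument. Throughout I use that \( \CH \) is exactly the assertion \( 2^{\aleph_0} \leq \aleph_1 \), so it corresponds to the parameters \( \delta = 1 \) and \( n = 0 \) in the notation of Theorem~\ref{th:EJM2} and Theorem~\ref{th:EJM-Rd-1}, and that by Cantor's inequality \( \aleph_1 \leq 2^{\aleph_0} \) the bound \( 2^{\aleph_0} \leq \aleph_1 \) is equivalent to \( \CH \).

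For \ref{th:EJM-Rd-2-a} \( \IMPLIES \) \ref{th:EJM-Rd-2-b} I would start from \( d \) non-zero vectors \( \mathbf{u}_1 , \dots , \mathbf{u}_d \) spanning \( \R^d \). A spanning family of size \( d \) is a basis, hence linearly independent, hence distinct and in general position in the sense of Definition~\ref{def:well-spread}. By Example~\ref{xmp:well-spread} the families \( \mathcal{H}_i \coloneqq \setof{ H_{ \mathbf{u}_i } ( \mathbf{p} ) }{ \mathbf{p} \in \R^d } \) form a pairwise disjoint sequence that is locally finite and of mesh \( d \). I then apply the direction \ref{th:EJM2-a} \( \IMPLIES \) \ref{th:EJM2-b} of Theorem~\ref{th:EJM2} with \( r = d \), \( n = 0 \), and \( \delta = 1 \): here \( N = ( n + 1 ) ( r - 1 ) + 1 = d \), so \( \CH \) yields sets \( A_1 , \dots , A_d \) covering \( \R^d \) (and hence \( D \)) with \( \card{ H \cap A_i } < \aleph_1 \), that is countable, for every \( H \in \mathcal{H}_i \). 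This is exactly \ref{th:EJM-Rd-2-b}; note that Theorem~\ref{th:EJM2} accepts arbitrary locally finite families, so no change of coordinates (Lemma~\ref{lem:changecoordinatestogetorthogonalplanes}) is required.

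The implication \ref{th:EJM-Rd-2-b} \( \IMPLIES \) \ref{th:EJM-Rd-2-c} is a padding argument: since \( d \geq 3 \) we have \( 2 d - 2 > d \), so I take the \( d \) vectors and sets produced by \ref{th:EJM-Rd-2-b} and adjoin \( d - 2 \) further non-zero vectors together with \( d - 2 \) empty sets. An empty set meets every hyperplane in an (empty, hence countable) set and does not change the union, so the resulting \( 2 d - 2 \) sets still cover \( D \) and satisfy the intersection bound. Finally, \ref{th:EJM-Rd-2-c} \( \IMPLIES \) \ref{th:EJM-Rd-2-a} is a direct invocation of Theorem~\ref{th:EJM-Rd-1} with \( N = 2 ( d - 1 ) \) and \( \delta = 1 \): the hypothesis \( \emptyset \neq \Int ( D ) \) is part of the standing assumption, the sets from \ref{th:EJM-Rd-2-c} cover \( D \) with \( ( < \aleph_1 ) \)-intersections, and the theorem delivers \( 2^{\aleph_0} \leq \aleph_1 \), i.e. \( \CH \). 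The genuine mathematical difficulty is thus entirely absorbed into Theorem~\ref{th:EJM-Rd-1} and the Theorem~\ref{th:EJM-Rd} behind it; at the level of the present statement the task is organizational, the only points demanding care being the bookkeeping of the parameters \( r = d \), \( N = 2 d - 2 \), and \( \delta = 1 \) when quoting the two cited theorems.
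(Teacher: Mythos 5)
Your proof is correct and follows exactly the route of the paper: \ref{th:EJM-Rd-2-a}\( \IMPLIES \)\ref{th:EJM-Rd-2-b} via Example~\ref{xmp:well-spread} and Theorem~\ref{th:EJM2} with \( r = d \), \( n = 0 \), \( \delta = 1 \); \ref{th:EJM-Rd-2-b}\( \IMPLIES \)\ref{th:EJM-Rd-2-c} by padding with empty sets; and \ref{th:EJM-Rd-2-c}\( \IMPLIES \)\ref{th:EJM-Rd-2-a} by Theorem~\ref{th:EJM-Rd-1} with \( \delta = 1 \). The parameter bookkeeping is accurate throughout.
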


\begin{proof}
The implication \ref{th:EJM-Rd-2-a}\( \IMPLIES \)\ref{th:EJM-Rd-2-b} follows from Theorem~\ref{th:EJM2} and Example~\ref{xmp:well-spread}, while \ref{th:EJM-Rd-2-b}\( \IMPLIES \)\ref{th:EJM-Rd-2-c} is trivial---take \( A_{ d + 1 } = \dots = A_{ 2 d - 2 } = \emptyset \).
The implication \ref{th:EJM-Rd-2-c}\( \IMPLIES \)\ref{th:EJM-Rd-2-a} follows from Theorem~\ref{th:EJM-Rd-1} when \( \delta = 1 \).
\end{proof}

\begin{corollary} \label{cor:Sierpinski51}
For all \( d \geq 3 \), \( \CH \) is equivalent to the existence of \( A_1 , \dots , A_d \) covering \( \R^d \) such that \( H_i ( x ) \cap A_i \) is countable, for all \( x \in \R \) and all \( 1 \leq i \leq d \).
\end{corollary}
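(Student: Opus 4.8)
The plan is to read this off from Theorem~\ref{th:EJM-Rd-2} as the special case \( D = \R^d \), \( \mathbf{u}_i = \mathbf{e}_i \). The one thing worth checking is that the two families of hyperplanes really coincide. By Notation~\ref{ntn:hyperplane} we have \( H_{\mathbf{e}_i} ( \mathbf{p} ) = \mathbf{p} + \setof{ \mathbf{v} }{ v_i = 0 } = \setof{ \mathbf{x} }{ x_i = p_i } = H_i ( p_i ) \), and as \( \mathbf{p} \) ranges over \( \R^d \) the coordinate \( p_i \) ranges over all of \( \R \). Hence \( \setof{ H_{\mathbf{e}_i} ( \mathbf{p} ) }{ \mathbf{p} \in \R^d } = \setof{ H_i ( x ) }{ x \in \R } \), so the requirement that \( H_i ( x ) \cap A_i \) be countable for every \( x \in \R \) is literally the requirement that \( H_{\mathbf{e}_i} ( \mathbf{p} ) \cap A_i \) be countable for every \( \mathbf{p} \in \R^d \).

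For the forward implication I would assume \( \CH \) and apply \ref{th:EJM-Rd-2-a}\( \IMPLIES \)\ref{th:EJM-Rd-2-b} of Theorem~\ref{th:EJM-Rd-2}, taking \( D = \R^d \) (which has nonempty interior) and the standard basis \( \mathbf{e}_1 , \dots , \mathbf{e}_d \) in the role of the spanning vectors. This produces \( A_1 , \dots , A_d \) covering \( \R^d \) with each \( H_{\mathbf{e}_i} ( \mathbf{p} ) \cap A_i \) countable, which by the identification above is exactly the desired decomposition.

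For the reverse implication I would feed a decomposition \( A_1 , \dots , A_d \) as in the statement into \ref{th:EJM-Rd-2-c}\( \IMPLIES \)\ref{th:EJM-Rd-2-a} of Theorem~\ref{th:EJM-Rd-2} (again with \( D = \R^d \)). Since condition~\ref{th:EJM-Rd-2-c} asks for \( 2 d - 2 \geq d \) sets, I would pad the given data by setting \( A_{ d + 1 } = \dots = A_{ 2 d - 2 } = \emptyset \) and choosing nonzero vectors \( \mathbf{u}_{ d + 1 } , \dots , \mathbf{u}_{ 2 d - 2 } \) distinct from \( \mathbf{e}_1 , \dots , \mathbf{e}_d \) and from each other; there are infinitely many directions, so this is possible. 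The empty sets meet every hyperplane in a finite, hence countable, set, so condition~\ref{th:EJM-Rd-2-c} holds and \( \CH \) follows. (Alternatively one can skip Theorem~\ref{th:EJM-Rd-2} and invoke Theorem~\ref{th:EJM-Rd-1} directly with \( \delta = 1 \) after the same padding.) There is essentially no obstacle here: all the content is carried by Theorems~\ref{th:EJM-Rd-1} and~\ref{th:EJM-Rd-2}, and the only genuine steps are the bookkeeping identity \( H_{\mathbf{e}_i} ( \mathbf{p} ) = H_i ( p_i ) \) and the harmless padding with empty sets needed to reach the \( 2 d - 2 \) sets demanded by~\ref{th:EJM-Rd-2-c}.
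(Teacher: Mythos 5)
Your proposal is correct and matches the paper's intent exactly: the corollary is stated without proof precisely because it is the special case \( D = \R^d \), \( \mathbf{u}_i = \mathbf{e}_i \) of Theorem~\ref{th:EJM-Rd-2}, using \ref{th:EJM-Rd-2-a}\( \IMPLIES \)\ref{th:EJM-Rd-2-b} for one direction and \ref{th:EJM-Rd-2-c}\( \IMPLIES \)\ref{th:EJM-Rd-2-a} (after padding with empty sets) for the other. Your verification of the identity \( H_{\mathbf{e}_i}(\mathbf{p}) = H_i(p_i) \) and the choice of distinct padding vectors are exactly the right bookkeeping.
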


Cantor's theorem says that \( 2^{\aleph_0} > \aleph_{ \delta } \) if \( \delta = 0 \), so when \( d = 3 \) Theorem~\ref{th:EJM-Rd} implies the following result, which is Lemma 1 of~\cite{Erdos:1994yq}:

\begin{corollary} \label{cor:EJM-Rd-2}
Suppose \( \mathbf{u}_1 , \dots , \mathbf{u}_4 \) are non-zero vectors of \( \R^3 \).
There are no \( A_1 , \dots , A_4 \) covering \( \R^3 \) such that \( H_{\mathbf{u}_i} ( \mathbf{p} ) \cap A_i \) is finite, for all \( \mathbf{p} \in \R^3 \) and all \( 1 \leq i \leq 4 \).
\end{corollary}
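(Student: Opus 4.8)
The plan is to read the corollary off as the special case $d = 3$, $\delta = 0$ of Theorem~\ref{th:EJM-Rd}, exactly as flagged in the sentence preceding the statement. With $d = 3$ one has $N = 2(d-1) = 4$, matching the four vectors and four sets; with $\delta = 0$ the requirement ``$\card{H_{\mathbf{u}_i}(\mathbf{p}) \cap A_i} < \aleph_\delta$'' becomes precisely the requirement that each such intersection be finite; and Cantor's theorem supplies $2^{\aleph_0} > \aleph_0 = \aleph_\delta$, so the hypothesis $2^{\aleph_0} > \aleph_\delta$ of Theorem~\ref{th:EJM-Rd} holds outright in \( \ZFC \).

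The only hypothesis of Theorem~\ref{th:EJM-Rd} not literally present in the corollary is that the vectors be distinct, so first I would dispose of this by a short reduction. If \( \mathbf{u}_i = \mathbf{u}_j \) with \( i \neq j \), then \( H_{\mathbf{u}_i}(\mathbf{p}) = H_{\mathbf{u}_j}(\mathbf{p}) \) for every \( \mathbf{p} \), and replacing the pair \( A_i , A_j \) by their union \( A_i \cup A_j \) preserves both the cover and the finiteness of every hyperplane section, since a union of two finite sets is finite. Iterating, I obtain \( k \leq 4 \) pairwise distinct non-zero vectors together with sets still covering \( \R^3 \) and still meeting each of their hyperplanes finitely. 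If \( k < 4 \), I pad with \( 4 - k \) further distinct non-zero vectors, each assigned the empty set, whose (empty) hyperplane sections are trivially finite; this restores exactly four distinct vectors without changing the union.

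With distinctness in hand I would argue by contradiction: supposing \( A_1 , \dots , A_4 \) cover \( \R^3 \) and each \( A_i \) meets every hyperplane orthogonal to \( \mathbf{u}_i \) in a finite set, I apply Theorem~\ref{th:EJM-Rd} with, say, \( \varepsilon = 1 \). This produces a set \( Z \subseteq ( -1 ; 1 )^3 \) with \( \mathbf{p} + Z \nsubseteq \bigcup_{i=1}^4 A_i \) for every \( \mathbf{p} \in \R^3 \); taking \( \mathbf{p} = \mathbf{0} \) already yields \( Z \nsubseteq \bigcup_{i=1}^4 A_i \), so some point of \( Z \subseteq \R^3 \) escapes the union, contradicting that the \( A_i \) cover \( \R^3 \).

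Since Theorem~\ref{th:EJM-Rd} does all the work, there is no genuine obstacle here; the only points requiring care are the translation of ``finite'' into ``\( < \aleph_\delta \)'' at \( \delta = 0 \), the observation that Cantor's inequality makes the hypothesis \( 2^{\aleph_0} > \aleph_\delta \) automatic, and the preliminary bookkeeping that brings an arbitrary tuple of non-zero vectors into the distinct form required by the theorem.
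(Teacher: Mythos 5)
Your proposal is correct and is exactly the paper's argument: the corollary is obtained by specializing Theorem~\ref{th:EJM-Rd} to \( d = 3 \), \( \delta = 0 \), \( N = 4 \), with Cantor's theorem supplying \( 2^{\aleph_0} > \aleph_0 \) outright. Your preliminary reduction handling possibly coincident vectors (merging the corresponding \( A_i \)'s and padding with empty sets) is a small piece of bookkeeping the paper leaves implicit, and it is carried out correctly.
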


\begin{remark}
Corollary~\ref{cor:EJM-Rd-2} is a negative result, asserting that \( \R^3 \) is not the union of four sets such that each intersects any plane orthogonal to a given vector in a finite set.
But looking at the proof of Theorem~\ref{th:EJM-Rd}, it could be recast in a positive way as:
\begin{quote}
\textit{Suppose \( \Bbbk \) is an infinite field. 
If there are non-zero vectors \( \mathbf{u}_1 , \dots , \mathbf{u}_4 \) of \( \Bbbk^3 \) and sets \( A_1 , \dots , A_4 \) covering \( \Bbbk^3 \) such that every hyperplane orthogonal to \( \mathbf{u}_i \) has finite intersection with \( A_i \), then \( \card{ \Bbbk } = \aleph_0 \).}
\end{quote}
\end{remark}

\begin{proof}[Proof of Theorem~\ref{th:EJM-Rd}]
Let \( A_1 , \dots , A_N \) and \( \varepsilon > 0 \) be as in the statement.
We consider two cases, depending whether the \( \mathbf{u}_i \)s span \( \R^d \).

\smallskip

\textbf{Case 1:} the vectors \( \mathbf{u}_1 , \dots , \mathbf{u}_N \) do not span \( \R^d \).

Let \( \mathbf{v} \) be a non-zero vector such that \( \mathbf{v} \boldsymbol{\cdot} \mathbf{u}_k = 0 \) for all \( 1 \leq k \leq N \), and let \( V = \setof{ r \mathbf{v} }{ r \in \R } \).
Since \( \card{V} = 2^{\aleph_0 } \geq \aleph_{ \delta + 1 } \) we can take \( Z \subseteq V \cap ( - \varepsilon ; \varepsilon )^d \) of size \( \aleph_{ \delta + 1 } \).
Since \( \mathbf{p} + V \subseteq H_{ \mathbf{u}_i } ( \mathbf{p} ) \) we have that \( \card{ ( \mathbf{p} + V ) \cap \bigcup_{ i = 1 }^N A_i } < \aleph_ \delta \), and hence \( \mathbf{p} + Z \nsubseteq \bigcup_{ i = 1 }^N A_i \).

\smallskip

\textbf{Case 2:} the vectors \( \mathbf{u}_1 , \dots , \mathbf{u}_N \) span \( \R^d \).

Without loss of generality we may assume that \( ( \mathbf{u}_1 , \dots , \mathbf{u}_d ) \) is a basis of \( \R^d \).
We first prove the result under the additional assumption that 
\[ 
( \mathbf{u}_1 , \dots , \mathbf{u}_d ) \text{ is the standard basis } ( \mathbf{e}_1 , \dots , \mathbf{e}_d ) .
\]
Let
\begin{align*}
U & = \Span \set{ \mathbf{u}_{ d + 1 } , \dots , \mathbf{u}_N } , & U_j & = \Span \set{ \mathbf{e}_j , \mathbf{u}_{ d + 1 } , \dots , \mathbf{u}_N } \text{, for } 1 \leq j \leq d .
\end{align*}
As \( \dim U \leq d - 2 \) then \( \dim U_j \leq d - 1 \), so there is a non-zero vector orthogonal to \( U_j \).

\begin{claim}
There is \( 1 \leq j \leq d \) such that there is \( \mathbf{v} \neq \mathbf{0} \) orthogonal to \( U_j \), and \( \mathbf{v} \) not collinear to any \( \mathbf{e}_i \).
\end{claim}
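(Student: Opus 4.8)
The plan is to reduce everything to a single elementary fact about subspaces and the coordinate axes, and then handle the one degenerate configuration by a short combinatorial argument. First I would observe that \( \mathbf{u}_{d+1} , \dots , \mathbf{u}_N \) are only \( N - d = d - 2 \) vectors, so \( \dim U \leq d - 2 \) and \( \dim U_j \leq d - 1 \); in particular \( U_j^\perp \neq \set{ \mathbf{0} } \) for every \( j \), as already noted. The crucial fact I would isolate is that a linear subspace \( W \subseteq \R^d \) with \( \dim W \geq 2 \) is never contained in the union \( \bigcup_{ i = 1 }^d \Span \set{ \mathbf{e}_i } \) of the coordinate axes: otherwise \( W \) would equal the union of the subspaces \( W \cap \Span \set{ \mathbf{e}_i } \), each of dimension \( \leq 1 < \dim W \) and hence proper, contradicting that a vector space over the infinite field \( \R \) is not a finite union of proper subspaces. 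Consequently, as soon as some \( U_j^\perp \) has dimension \( \geq 2 \), I can pick \( \mathbf{v} \in U_j^\perp \) lying off all the axes, i.e.\ not collinear to any \( \mathbf{e}_i \), and the claim holds for that \( j \).

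So I would argue by contradiction, assuming the claim fails. By the previous paragraph this forces \( \dim U_j^\perp = 1 \) for every \( j \), and the generator of each line \( U_j^\perp \) must then be a multiple of some basis vector \( \mathbf{e}_{ \tau ( j ) } \). From \( \dim U_j = d - 1 \) for every \( j \) I would read off that \( \dim U = d - 2 \) and that no \( \mathbf{e}_j \) lies in \( U \). Moreover \( U \subseteq U_j \) gives \( U_j^\perp \subseteq U^\perp \), so each \( \mathbf{e}_{ \tau ( j ) } \) lies in the \( 2 \)-dimensional space \( U^\perp \); and since \( \mathbf{e}_{ \tau ( j ) } \) is orthogonal to \( \mathbf{e}_j \in U_j \), we get \( \tau ( j ) \neq j \).

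Finally I would extract a contradiction from the map \( \tau \). Since \( \dim U^\perp = 2 \), at most two coordinate vectors lie in \( U^\perp \), so \( \tau \) takes at most two values. If \( \tau \) is constant with value \( a \), then \( \tau ( a ) = a \) violates \( \tau ( a ) \neq a \). If \( \tau \) takes two values \( a \neq b \), then \( \mathbf{e}_a , \mathbf{e}_b \in U^\perp \) force \( U^\perp = \Span \set{ \mathbf{e}_a , \mathbf{e}_b } \), whence \( U = \Span \setof{ \mathbf{e}_i }{ i \neq a , b } \); as \( d \geq 3 \) there is an index \( i \notin \set{ a , b } \), and then \( \mathbf{e}_i \in U \), contradicting that no \( \mathbf{e}_j \) lies in \( U \). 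Either way we reach a contradiction, which proves the claim. I expect the main obstacle to be the bookkeeping in this last step—correctly reducing to the case where every \( U_j^\perp \) is a coordinate line and then ruling it out—rather than the underlying geometry, which becomes essentially forced once the union-of-axes fact is in hand.
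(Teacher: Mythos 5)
Your proof is correct and follows essentially the same route as the paper: the same dichotomy on \( \dim U_j^\perp \), the same observation that in the degenerate case each \( U_j^\perp \) is a coordinate line \( \R \mathbf{e}_{ \tau ( j ) } \) with \( \tau ( j ) \neq j \) sitting inside the \( 2 \)-dimensional \( U^\perp \), and a pigeonhole contradiction exploiting \( d \geq 3 \). The only cosmetic differences are that you invoke the union-of-proper-subspaces lemma where the paper simply exhibits \( \mathbf{e}_{i_1} + \mathbf{e}_{i_2} \in U_j^\perp \), and that your final contradiction identifies \( U \) as a coordinate subspace containing some \( \mathbf{e}_i \) rather than producing two independent vectors orthogonal to some \( U_k \).
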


\begin{proof}[Proof of the Claim]
Suppose \( \dim U_j \leq d - 2 \) for some \( j \), and that every \( \mathbf{v} \neq \mathbf{0} \) orthogonal to \( U_j \) is collinear to some \( \mathbf{e}_i \).
As the orthogonal complement of \( U_j \) has dimension \( \geq 2 \), there are \( \mathbf{e}_{i_1} , \mathbf{e}_{i_2} \) orthogonal to \( U_j \), and hence \( \mathbf{v} = \mathbf{e}_{i_1} + \mathbf{e}_{i_2} \) is orthogonal to \( U_j \) and is not collinear to any vector of the standard basis, against our assumption.

Therefore we may assume that \( \dim U_j = d - 1 \) for all \( j \), and hence \( \dim U = d - 2 \).
Towards a contradiction, suppose that for every \( j \in \set{ 1 , \dots , d } \) there is a (necessarily unique) \( j^* \neq j \) such that \( \mathbf{e}_{j^*} \) is orthogonal to \( U_j \).
The map \( j \mapsto j^* \) is not constant, so fix \( i \neq j \) such that \( i^* \neq j^* \).
As \( d \geq 3 \) pick \( k \in \set{ 1 , \dots , d } \) distinct from \( i^* , j^* \).
Both vectors \( \mathbf{e}_{i^*} \) and \( \mathbf{e}_{j^*} \) are orthogonal to \( U \) and to \( \mathbf{e}_k \), so both are orthogonal to \( U_k \), which is impossible as \( \dim U_k = d - 1 \).
\end{proof}

Fix \( j \) and \( \mathbf{v} = ( a_1 , \dots , a_d ) \) as in the Claim.
By a re-indexing, if needed, we may assume that \( j = d \), that is \( a_d = 0 \), and that \( a_1 , a_2 \) are non-zero.
By rescaling we may assume that 
\[
 \mathbf{v} = ( 1 , a_2 , \dots , a_{ d - 1} , 0 ) , \qquad a_2 \neq 0
\] 
and that
\begin{equation}\label{eq:th:EJM-Rd-0}
 d \leq k \leq N \IMPLIES \mathbf{v} \boldsymbol{\cdot} \mathbf{u}_k = 0 . 
\end{equation}
Let \( \nu \coloneqq \max \setLR{ 1 , \card{ a_2 } , \dots , \card{ a_{ d - 1 } } } \).
For \( 1 \leq i \leq d \) let \( X_i \subseteq ( - \varepsilon / 2 ; \varepsilon / 2 ) \) such that \( \card{ X_1 } = \aleph_{ \delta + 1} \) and \( \card{ X_i } = \aleph_ \delta \) for all \( 2 \leq i \leq d \). 
By Lemma~\ref{lem:setofdifferenceinabeliangroup}, let \( S \subseteq ( - \frac{ \varepsilon } { 2 \nu } ; \frac{ \varepsilon } { 2 \nu } ) \) be of size \( \aleph _{ \delta + 1 } \) such that \( ( S - S ) \cap ( a_2^{-1}X_2 - a_2^{-1} X_2 ) = \set{0} \), and let 
\[
 V = \setof{ s \mathbf{v} }{ s \in S } .
\]
By~\eqref{eq:th:EJM-Rd-0}, if \( \mathbf{q} \in \R^d \) and \( s \in S \) then 
\begin{equation}\label{eq:th:EJM-Rd-1}
d \leq k \leq N \IMPLIES \mathbf{q} + V \subseteq H_{\mathbf{u}_k} ( \mathbf{q} + s \mathbf{v} ) .
\end{equation}
As \( V \subseteq ( - \varepsilon / 2 ; \varepsilon / 2 )^d \) it follows that 
\[
Z \coloneqq V + \textstyle \prod_{i = 1}^d X_i
\]
is a subset of \( ( - \varepsilon ; \varepsilon )^d \), and it is of cardinality \( \aleph_{ \delta + 1 } \).
Fix \( \mathbf{p}= ( p_1 , \dots , p_d ) \in \R^d \) and, towards a contradiction, suppose that \( \mathbf{p} + Z \subseteq \bigcup_{i = 1 }^N A_i \).
Observe that \( Z = \bigcup_{ x \in X_1 } Z_x \), where 
\[ 
Z_x \coloneqq V + ( \set{x} \times \textstyle \prod_{i = 2 }^d X_i ) .
\]

\begin{claim}\label{cl:th:EJM-Rd-1}
The sets \( Z_x \) are pairwise disjoint.
\end{claim}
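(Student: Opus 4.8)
The plan is to prove the contrapositive form directly: if $Z_x$ and $Z_{x'}$ intersect, then $x = x'$, which is equivalent to pairwise disjointness for distinct indices. First I would write out a generic element of $Z_x$ in coordinates. Since $\mathbf{v} = (1, a_2, \dots, a_{d-1}, 0)$, a point $s\mathbf{v} + (x, y_2, \dots, y_d)$ of $Z_x$, with $s \in S$ and $y_i \in X_i$, has first coordinate $s + x$, $i$-th coordinate $s a_i + y_i$ for $2 \leq i \leq d - 1$, and $d$-th coordinate $y_d$. The point is that the translation part $(x, y_2, \dots, y_d)$ ranges over a product box, while the $V$-part is a one-dimensional family along $\mathbf{v}$.

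The crucial step is to isolate the second coordinate. Suppose a point lies in $Z_x \cap Z_{x'}$, say
\[
s\mathbf{v} + (x, y_2, \dots, y_d) = s'\mathbf{v} + (x', y'_2, \dots, y'_d)
\]
with $s, s' \in S$ and $y_i, y'_i \in X_i$. Equating second coordinates gives $s a_2 + y_2 = s' a_2 + y'_2$, and since $a_2 \neq 0$ (as guaranteed by the preceding Claim, which is what licenses dividing by $a_2$) this rearranges to
\[
s - s' = a_2^{-1} y'_2 - a_2^{-1} y_2 \in (S - S) \cap (a_2^{-1} X_2 - a_2^{-1} X_2).
\]
By the defining property of $S$ coming from Lemma~\ref{lem:setofdifferenceinabeliangroup}, this intersection is $\set{0}$, so $s = s'$.

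Finally, with $s = s'$ in hand, equating the first coordinates yields $s + x = s' + x' = s + x'$, hence $x = x'$, as desired. The entire argument is an elementary coordinate computation, so there is no serious obstacle; the only real content is recognizing that the second coordinate is precisely the one encoding the separation condition built into the choice of $S$ and $X_2$, and that the hypothesis $a_2 \neq 0$ from the earlier Claim is exactly what makes the passage to $a_2^{-1} X_2$ legitimate.
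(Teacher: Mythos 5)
Your proof is correct and follows essentially the same route as the paper: equate the second coordinates to place $s - s'$ in $(S-S)\cap(a_2^{-1}X_2 - a_2^{-1}X_2)=\set{0}$, conclude $s=s'$, then read off $x=x'$ from the first coordinates. The only minor imprecision is attributing $a_2\neq 0$ directly to the preceding Claim, whereas it comes from the re-indexing and rescaling performed just after it; this does not affect the argument.
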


\begin{proof}
If \( Z_{x'} \cap Z_{x''} \neq \emptyset \), then there are \( s' , s'' \in S \), \( x_i' , x_i'' \in X_i \) for \( 2 \leq i \leq d \) such that 
\[
s' \mathbf{v} + ( x ' , x'_2 , \dots , x'_d ) = s'' \mathbf{v} + ( x '' , x_2 '' , \dots , x''_d ) .
\]
The \( 2 \)-nd component yields that \( s' a_2 + x_2' = s'' a_2 + x_2 '' \), and hence 
\[
s' - s'' = a_2^{-1} ( x_2 '' - x_2' ) \in ( S - S ) \cap ( a_2^{-1} X_2 - a_2^{-1} X_2 ) = \setLR{0} .
\]
It follows that \( s' = s'' = s \), and hence 
\[
s \mathbf{v} + ( x ' , x_2 ' , \dots , x'_d ) = s \mathbf{v} + ( x '' , x_2 '' , \dots , x''_d ) .
\]
Looking at the \( 1 \)-st component we obtain \( s + x ' = s + x '' \) and hence \( x ' = x '' \).
\end{proof}

Summarizing: 
\begin{equation}\label{eq:th:EJM-Rd-2}
 \mathbf{p} + Z = \bigcup_{ x \in X_1 } \mathbf{p} + Z_x \text{, and } x' \neq x'' \IMPLIES \mathbf{p} + Z_{x'} \cap \mathbf{p} + Z_{x''} = \emptyset .
\end{equation}

\begin{claim}\label{cl:th:EJM-Rd-2}
\( \card{ A_d \cap ( \mathbf{p} + Z ) } < \aleph_{ \delta + 1 } \).
\end{claim}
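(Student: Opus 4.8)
The plan is to exploit the one structural feature of $\mathbf{v}$ that the Claim and the subsequent re-indexing were arranged to produce: since $j = d$ and $\mathbf{v}$ is orthogonal to $U_d \supseteq \setLR{\mathbf{e}_d}$, the $d$-th coordinate of $\mathbf{v}$ vanishes, as recorded in $\mathbf{v} = ( 1 , a_2 , \dots , a_{ d - 1 } , 0 )$. Consequently every vector $s \mathbf{v}$ with $s \in S$ has vanishing $d$-th coordinate, so translating within $V$ leaves the last coordinate untouched. First I would observe that any point of $Z = V + \prod_{ i = 1 }^d X_i$ has $d$-th coordinate equal to $x_d$ for some $x_d \in X_d$; hence the $d$-th coordinate of a point of $\mathbf{p} + Z$ lies in the set $\setof{ p_d + x_d }{ x_d \in X_d }$, which has cardinality at most $\card{ X_d } = \aleph_ \delta$.

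Next I would translate this into a covering by few hyperplanes orthogonal to $\mathbf{u}_d = \mathbf{e}_d$. Recalling Notation~\ref{ntn:hyperplane}, a point lies on $H_{ \mathbf{e}_d } ( \mathbf{q} )$ precisely when its $d$-th coordinate agrees with that of $\mathbf{q}$; thus $\mathbf{p} + Z$ is contained in the union of at most $\aleph_ \delta$-many such hyperplanes, one for each value $p_d + x_d$ with $x_d \in X_d$. By the hypothesis of the theorem, $A_d$ meets every hyperplane orthogonal to $\mathbf{u}_d$ in a set of size $< \aleph_ \delta$. Intersecting with $\mathbf{p} + Z$, I obtain that $A_d \cap ( \mathbf{p} + Z )$ is covered by at most $\aleph_ \delta$-many sets of the form $A_d \cap H_{ \mathbf{e}_d } ( \mathbf{q} )$, each of cardinality $< \aleph_ \delta$.

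Finally I would invoke the cardinal-arithmetic fact recalled in Section~\ref{subsec:Notation}: a union of at most $\aleph_ \delta$-many sets, each of cardinality $\leq \aleph_ \delta$, has cardinality $\leq \aleph_ \delta$. Since each piece has size $< \aleph_ \delta \leq \aleph_ \delta$ and there are at most $\aleph_ \delta$-many of them, this yields $\card{ A_d \cap ( \mathbf{p} + Z ) } \leq \aleph_ \delta < \aleph_{ \delta + 1 }$, as claimed. I expect no genuine obstacle here: the whole content sits in the earlier choice of $j = d$ and of a $\mathbf{v}$ with $\mathbf{v} \boldsymbol{\cdot} \mathbf{e}_d = 0$, which confines $\mathbf{p} + Z$ to few hyperplanes orthogonal to $\mathbf{u}_d$. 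Once that is in place the estimate is a direct counting argument, and the only point to watch is that $\card{ X_d } = \aleph_ \delta$ rather than $\aleph_{ \delta + 1 }$, so that the number of relevant hyperplanes stays at $\aleph_ \delta$ and the sum does not reach $\aleph_{ \delta + 1 }$.
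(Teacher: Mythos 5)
Your argument is correct and is essentially the paper's own proof: both rest on the observation that the last coordinate of \( \mathbf{v} \) vanishes, so \( \mathbf{p} + Z \) lies in the union of the \( \aleph_\delta \)-many hyperplanes \( H_d ( p_d + w ) \) with \( w \in X_d \), each meeting \( A_d \) in a set of size \( < \aleph_\delta \), whence the union has size \( \leq \aleph_\delta < \aleph_{\delta+1} \). No substantive difference.
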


\begin{proof}
As the last component of \( \mathbf{v} \) is \( 0 \), the set \( V \) does not contribute to \( Z \) on the \( d \)-th coordinate, so for all \( w \in \R \)
\[ 
\begin{split}
( \mathbf{p} + Z ) \cap H_d ( p_d + w ) & = ( \mathbf{p} + \textstyle \prod_{ i = 1 }^d X_i ) \cap H_d ( p_d + w ) 
\\
 & = \mathbf{p} + \bigl ( ( \textstyle \prod_{ i = 1 }^d X_i ) \cap H_d ( w ) \bigr )
\end{split}
\]
and this set is \( \mathbf{p} + ( \prod_{i = 1 }^{ d - 1} X_i ) \times \setLR{ w } \) if \( w \in X_d \), and it is empty if \( w \notin X_d \).
Therefore \( \mathbf{p} + Z \subseteq \bigcup_{w \in X_d } H_d ( p_d + w ) \) and 
\[
A_d \cap ( \mathbf{p} + Z ) \subseteq \textstyle \bigcup_{ w \in X_d } A_d \cap H_d ( p_d + w ) .
\]
By assumption \( \card{ A_d \cap H_d ( p_d + w ) } < \aleph_{ \delta } \) for any \( w \in \R \), and \( \card{ X_d } = \aleph_ \delta \), so \( \card{ A_d \cap ( \mathbf{p} + Z ) } \leq \aleph_ \delta < \aleph_{ \delta + 1 } \).
\end{proof}

As \( \card{ X_1 } = \aleph_ { \delta + 1} \) and by~\eqref{eq:th:EJM-Rd-2}, there is \( \bar{x}_1 \in X_1 \) such that \( \mathbf{p} + Z_{\bar{x}_1 } \) is disjoint from \( A_d \).
This implies that 
\[ 
\mathbf{p} + Z_{ \bar{x}_1} \subseteq A_1 \cup \dots \cup A_{ d - 1 } \cup \textstyle \bigcup_{ k = d + 1 }^N A_k .
\]

\begin{claim}\label{cl:th:EJM-Rd-3}
There is an \( \bar{s} \in S \) such that \( \mathbf{p} + \bar{s} \mathbf{v} + ( \set{ \bar{x}_1 } \times \prod_{ i = 2 }^d X_i ) \) is disjoint from \( \bigcup_{ k = d + 1 }^N A_k \).
\end{claim}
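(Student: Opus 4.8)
The plan is to exploit the orthogonality recorded in~\eqref{eq:th:EJM-Rd-0}. Write \( Y \coloneqq \set{\bar{x}_1} \times \prod_{i = 2}^d X_i \), so that the set named in the Claim is \( \mathbf{p} + \bar{s} \mathbf{v} + Y \) for the sought \( \bar{s} \), and note \( \card{Y} = \aleph_\delta \), being (essentially) a finite product of sets of size \( \aleph_\delta \). The key geometric observation is this: for each \( k \) with \( d + 1 \leq k \leq N \) we have \( \mathbf{v} \boldsymbol{\cdot} \mathbf{u}_k = 0 \) by~\eqref{eq:th:EJM-Rd-0}, so for a \emph{fixed} \( \mathbf{y} \in Y \) every point \( \mathbf{p} + s \mathbf{v} + \mathbf{y} \) with \( s \in S \) differs from \( \mathbf{p} + \mathbf{y} \) by the vector \( s \mathbf{v} \), which is orthogonal to \( \mathbf{u}_k \). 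Hence the whole \( S \)-indexed family \( \setof{ \mathbf{p} + s \mathbf{v} + \mathbf{y} }{ s \in S } \) lies on the single hyperplane \( H_{\mathbf{u}_k}( \mathbf{p} + \mathbf{y} ) \).

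This is the crux: it converts an \( S \)-sized family of points into points of one hyperplane, on which \( A_k \) is known to be small. Concretely, since \( \mathbf{v} \neq \mathbf{0} \) the map \( s \mapsto \mathbf{p} + s \mathbf{v} + \mathbf{y} \) is injective, so the ``bad'' set
\[
B_{\mathbf{y}, k} \coloneqq \setof{ s \in S }{ \mathbf{p} + s \mathbf{v} + \mathbf{y} \in A_k }
\]
injects into \( A_k \cap H_{\mathbf{u}_k}( \mathbf{p} + \mathbf{y} ) \), whence \( \card{ B_{\mathbf{y}, k} } < \aleph_\delta \) by the hypothesis of the theorem. I would then collect all exceptions: \( B \coloneqq \bigcup_{\mathbf{y} \in Y} \bigcup_{k = d + 1}^N B_{\mathbf{y}, k} \) is a union of \( \aleph_\delta \cdot ( d - 2 ) = \aleph_\delta \)-many sets, each of size \( < \aleph_\delta \), so \( \card{B} \leq \aleph_\delta \) by the union bound for \( \aleph_\delta \) recalled in Section~\ref{subsec:Notation}.

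Finally, since \( \card{S} = \aleph_{\delta + 1} > \aleph_\delta \geq \card{B} \), there is some \( \bar{s} \in S \setminus B \); for this \( \bar{s} \), by construction no point of \( \mathbf{p} + \bar{s} \mathbf{v} + Y \) lies in any \( A_k \) with \( d + 1 \leq k \leq N \), which is exactly the assertion of Claim~\ref{cl:th:EJM-Rd-3}. I do not expect a genuine obstacle here; the only things to keep straight are the cardinal arithmetic bookkeeping and the observation — already prepared by~\eqref{eq:th:EJM-Rd-0} — that translating along \( \mathbf{v} \) never leaves the relevant hyperplanes, so that for each fixed \( \mathbf{y} \) the smallness of \( A_k \) on one hyperplane controls all of \( S \) simultaneously.
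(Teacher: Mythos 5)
Your argument is correct and rests on exactly the key fact the paper uses: by~\eqref{eq:th:EJM-Rd-0} the whole family \( \setof{ \mathbf{p} + s \mathbf{v} + \mathbf{y} }{ s \in S } \) lies in the single hyperplane \( H_{\mathbf{u}_k} ( \mathbf{p} + \mathbf{y} ) \), on which \( A_k \) has size \( < \aleph_\delta \). The paper reaches the same conclusion by contradiction, stabilizing the witnesses \( ( x_2 ( s ) , \dots , x_d ( s ) ) \) and the index \( k \) by pigeonhole over \( S \); your direct union bound over the \( \aleph_\delta \)-many pairs \( ( \mathbf{y} , k ) \) is an equivalent reorganization of the same count.
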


\begin{proof}
Towards a contradiction, suppose that for all \( s \in S \) and all \( 2 \leq i \leq d \) there are \( x_i ( s ) \in X_i \) such that 
\[ 
 \mathbf{p} + s \mathbf{v} + ( \bar{x}_1 , x_2 ( s ) , \dots , x_d ( s ) ) \in \textstyle \bigcup_{ k = d + 1 }^N A_k .
\]
As \( \card{X_i } = \aleph_ \delta \) for \( 2 \leq i \leq d \), there are \( x_i' \in X_i \) and \( S' \subseteq S \) of size \( \aleph _{ \delta + 1 } \) such that \( x_i ( s ) = x_i ' \) for all \( s \in S ' \). 
Therefore 
\[
\FORALL{ s \in S '} \bigl ( \mathbf{p} + s \mathbf{v} + ( \bar{x}_1 , x_2 ' , \dots , x_d ' ) \in \textstyle\bigcup_{ k = d + 1 }^N A_k \bigr ) .
\] 
As \( \card{S'} = \aleph_{ \delta + 1 } \) there is \( S^* \subseteq S' \) of size \( \aleph_{ \delta + 1 } \), and there is \( k \) with \( d < k \leq N \) such that 
\begin{equation}\label{eq:th:EJM-Rd-3}
\FORALL{ s \in S ^*} \bigl ( \mathbf{p} + s \mathbf{v} + ( \bar{x}_1 , x_2 ' , \dots , x_d ' ) \in A_k \bigr ) . 
\end{equation}
Fix \( s^* \in S^* \) and let 
\[
 Q_k \coloneqq H_{\mathbf{u}_k} \bigl ( \mathbf{p} + s^* \mathbf{v} + ( \bar{x}_1 , x_2 ' , \dots , x_d ' ) \bigr ) .
\]
By~\eqref{eq:th:EJM-Rd-1} with \( \mathbf{q} = \mathbf{p} + ( \bar{x}_1 , x_2 ' , \dots , x_d ' ) \) we have that 
\[ 
\FORALL{s \in S^*}\bigl ( \mathbf{p} + s \mathbf{v} + ( \bar{x}_1 , x_2 ' , \dots , x_d ' ) \in Q_k \bigr ) .
\]
Therefore by~\eqref{eq:th:EJM-Rd-3} \( \mathbf{p} + s \mathbf{v} + ( \bar{x}_1 , x_2 ' , \dots , x_d ' ) \in A_k \cap Q_k \) for all \( s \in S^* \), and this is a contradiction, since \( \card{ A_k \cap Q_k } < \aleph_ \delta \), and yet it must contain \( \aleph_{ \delta + 1 } \) points.
\end{proof}

Let \( \bar{s} \) be as in Claim~\ref{cl:th:EJM-Rd-3}.
Then \( \card{ \mathbf{p} + \bar{s} \mathbf{v} + ( \setLR{ \bar{x}_1 } \times \prod_{ i = 2 }^d X_i ) } = \aleph_ \delta \) and 
\begin{equation}\label{eq:th:EJM-Rd-4}
 W_1 \coloneqq \mathbf{p} + \bar{s} \mathbf{v} + ( \setLR{ \bar{x}_1 } \times \textstyle \prod_{ i = 2 }^d X_i ) \subseteq A_1 \cup \dots \cup A_{ d - 1 } .
\end{equation}
As \( W_1 \) is included in \( H_1 ( p_1 + \bar{s} + \bar{x}_1 ) \), and \( \card{ H_1 ( p_1 + \bar{s} + \bar{x}_1 ) \cap A_1 } < \aleph_ \delta \), then \( \card{ W_1 \cap A_1 } < \aleph_ \delta \). 
As \( \card{ X_2 } = \aleph_ \delta \), there is some \( \bar{x}_2 \in X_2 \) such that the set \( W_2 \coloneqq \mathbf{p} + \bar{s} \mathbf{v} + ( \set{ \bar{x}_1 } \times \set{ \bar{x}_2 } \times \prod_{ i = 3 }^d X_i) \) is disjoint from \( A_1 \), and hence it is contained in \( A_2 \cup \dots \cup A_{d - 1 } \) and in the hyperplane \( H_2 ( p_2 + \bar{s} a_2 + \bar{x}_2 ) \). 
As before \( \card{ W_2 \cap A_2 } < \aleph_ \delta \). 
Repeating this argument we construct \( \bar{x}_3 \in X_3 \), \ldots , \( \bar{x}_d \in X_d \) such that 
\[
\mathbf{p} + \bar{s} \mathbf{v} + ( \bar{x}_1 , \dots , \bar{x}_d ) \notin A_1 \cup \dots \cup A_{ d - 1 }
\]
against~\eqref{eq:th:EJM-Rd-4}.
This concludes the proof, assuming that \( \mathbf{u}_1 , \dots , \mathbf{u}_d \) is the standard basis.

If \( \mathbf{u}_1 , \dots , \mathbf{u}_d \) is an arbitrary basis of \( \R^d \), by Lemma~\ref{lem:changecoordinatestogetorthogonalplanes} there is a linear injective transformation \( T \colon \R^d \to \R^d \) that maps every hyperplane orthogonal to \( \mathbf{u}_i \) to a hyperplane orthogonal to \( \mathbf{e}_i \), for \( 1 \leq i \leq d \).
The transformation \( T \) maps parallel hyperplanes to parallel hyperplanes, so for \( d \leq k < N \) let \( \bar{\mathbf{u}}_k \) be a vector such that \( T \) maps every hyperplane orthogonal to \( \mathbf{u}_k \) to a hyperplane orthogonal \( \bar{\mathbf{u}}_k \).
For \( 1 \leq k \leq d \) set \( \bar{\mathbf{u}}_k \) to be \( \mathbf{e}_k \).
As \( T^{- 1 } \) is continuous, pick \( \bar{ \varepsilon } > 0 \) small enough so that \( T^{- 1 } [ ( - \bar{ \varepsilon } ; \bar{ \varepsilon } )^d ] \subseteq ( - \varepsilon ; \varepsilon )^d \).
Arguing as above there is a \( \bar{Z} \subseteq ( - \bar{ \varepsilon } ; \bar{ \varepsilon } )^d \) such that \( \mathbf{p} + \bar{Z} \nsubseteq \bigcup_{ i = 1 }^N T [ A_i ] \) for all \( \mathbf{p} \in \R^d \).
Then \( Z = T^{-1} [ \bar{Z} ] \subseteq ( - \varepsilon ; \varepsilon )^d \) is such that \( \mathbf{p} + Z \nsubseteq \bigcup_{ i = 1 }^N A_i \) for all \( \mathbf{p} \in \R^d \).
\end{proof}

\begin{theorem}\label{th:hyperplanes=>bound}
Let \( \delta \in \On \), \( d \geq 3 \), \( n \geq 0 \), and let \( N = ( n + 1 ) ( d - 1 ) + 1 \).
Suppose \( \mathbf{u}_1 , \dots , \mathbf{u}_N \) are distinct, non-zero vectors of \( \R^d \), and that \( A_1 , \dots , A_ N \subseteq \R^d \) are such that for all \( \mathbf{p} \in \R^d \)
\[
 \FORALL{ 1 \leq k \leq N} \left ( \card{ H_{\mathbf{u}_k } ( \mathbf{p} ) \cap A_k } < \aleph_ \delta \right ) .
\]
If \( 2^{\aleph_0} > \aleph_{ \delta + n } \) then for every \( \varepsilon > 0 \) there is \( Z_{ n , \varepsilon } \subseteq ( - \varepsilon ; \varepsilon )^d \) of size \( \aleph_{ \delta + n + 1} \) such that 
\[ 
 \FORALL{ \mathbf{p} \in \R^d } \left ( \mathbf{p} + Z_{ n , \varepsilon } \nsubseteq \textstyle \bigcup_{ k = 1 }^N A_k \right ) . 
\]
\end{theorem}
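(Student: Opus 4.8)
The plan is to prove the statement by induction on $n$, holding $d \geq 3$ and $\delta \in \On$ fixed; write $P(n)$ for the full assertion of Theorem~\ref{th:hyperplanes=>bound}. The feature that makes everything work is that the conclusion quantifies over \emph{all} base points $\mathbf{p}$: a witness $Z$ produced for the parameter $n$ has the property that \emph{no} translate of it is covered, and this is exactly what lets me reuse it while sliding along an auxiliary direction in the inductive step.

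\textbf{Base case $n=0$.} Here $N=d$ and the hypothesis is $2^{\aleph_0}>\aleph_\delta$. If $\mathbf{u}_1,\dots,\mathbf{u}_d$ fail to span $\R^d$, I pick $\mathbf{v}\neq\mathbf{0}$ orthogonal to all of them and take $Z$ to be $\aleph_{\delta+1}$ points of $(-\varepsilon;\varepsilon)^d$ on the line $\setof{r\mathbf{v}}{r\in\R}$ (possible since that line has size $2^{\aleph_0}\geq\aleph_{\delta+1}$); each translate $\mathbf{p}+Z$ lies inside the single hyperplane $H_{\mathbf{u}_i}(\mathbf{p})$ for every $i$, so $\card{(\mathbf{p}+Z)\cap\bigcup_i A_i}<\aleph_\delta<\aleph_{\delta+1}$ and $Z$ is not covered. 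If instead they form a basis, Lemma~\ref{lem:changecoordinatestogetorthogonalplanes} lets me assume (after transporting the $A_i$ by a linear isomorphism and shrinking $\varepsilon$ at the end, exactly as in the last paragraph of the proof of Theorem~\ref{th:EJM-Rd}) that $\mathbf{u}_i=\mathbf{e}_i$. I then set $Z=\prod_{i=1}^d X_i$ with $X_1\subseteq(-\varepsilon;\varepsilon)$ of size $\aleph_{\delta+1}$ and $X_2,\dots,X_d$ of size $\aleph_\delta$, fix $\mathbf{p}$, assume $\mathbf{p}+Z\subseteq\bigcup_i A_i$, and peel off one set per coordinate: since $\mathbf{p}+Z$ meets only $\aleph_\delta$ hyperplanes $H_d(\cdot)$ we get $\card{A_d\cap(\mathbf{p}+Z)}\leq\aleph_\delta$, so among the $\aleph_{\delta+1}$ columns indexed by $X_1$ some $\bar x_1$ gives a column disjoint from $A_d$, hence contained in $A_1\cup\dots\cup A_{d-1}$; that column lies in one hyperplane $H_1(\cdot)$, so $A_1$ meets fewer than $\aleph_\delta$ of its $\aleph_\delta$ sub-columns indexed by $X_2$, and I fix $\bar x_2$ avoiding $A_1$; iterating with coordinates $3,\dots,d$ against $A_2,\dots,A_{d-1}$ yields a point of $\mathbf{p}+Z$ lying in none of $A_1,\dots,A_d$, a contradiction.

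\textbf{Inductive step.} Assume $P(n)$; I prove $P(n+1)$, where $N'=(n+2)(d-1)+1$ and $2^{\aleph_0}>\aleph_{\delta+n+1}$. Choose any $d-1$ of the vectors as a batch $B$ (say $\mathbf{u}_{N'-d+2},\dots,\mathbf{u}_{N'}$) and a nonzero $\mathbf{v}$ orthogonal to all of $B$, which exists since $B$ spans a space of dimension $\leq d-1$. The remaining $N'-(d-1)=(n+1)(d-1)+1=N$ vectors are distinct and nonzero, so $P(n)$, applicable because $2^{\aleph_0}>\aleph_{\delta+n}$, yields $Z'\subseteq(-\varepsilon/2;\varepsilon/2)^d$ of size $\aleph_{\delta+n+1}$ no translate of which is contained in $\bigcup_{k\notin B}A_k$. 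Let $S\subseteq\R$ be small enough that $\setof{s\mathbf{v}}{s\in S}\subseteq(-\varepsilon/2;\varepsilon/2)^d$ with $\card{S}=\aleph_{\delta+n+2}$ (possible as $2^{\aleph_0}\geq\aleph_{\delta+n+2}$), and put $Z=\setof{s\mathbf{v}+z'}{s\in S,\ z'\in Z'}\subseteq(-\varepsilon;\varepsilon)^d$, which has size $\aleph_{\delta+n+2}$. Fix $\mathbf{p}$ and suppose $\mathbf{p}+Z\subseteq\bigcup_{k=1}^{N'}A_k$. For each $s\in S$ the translate $\mathbf{p}+s\mathbf{v}+Z'$ is not contained in $\bigcup_{k\notin B}A_k$, so there is $z_s\in Z'$ with $\mathbf{p}+s\mathbf{v}+z_s\in\bigcup_{k\in B}A_k$. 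Since $\card{Z'}=\aleph_{\delta+n+1}<\aleph_{\delta+n+2}$ and the successor cardinal $\aleph_{\delta+n+2}$ is regular, some $\bar z\in Z'$ equals $z_s$ for $\aleph_{\delta+n+2}$-many $s$; as $B$ is finite, some single $A_k$ with $k\in B$ then contains $\mathbf{p}+s\mathbf{v}+\bar z$ for $\aleph_{\delta+n+2}$-many $s$. But $\mathbf{v}\perp\mathbf{u}_k$ forces all these distinct points into the single hyperplane $H_{\mathbf{u}_k}(\mathbf{p}+\bar z)$, contradicting $\card{H_{\mathbf{u}_k}(\mathbf{p}+\bar z)\cap A_k}<\aleph_\delta$.

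The conceptual heart of the inductive step is that one auxiliary direction can neutralize a full batch of $d-1$ vectors at once: once $\mathbf{v}\perp B$, sliding along $\mathbf{v}$ never leaves the hyperplanes attached to $B$, so each such set is met in fewer than $\aleph_\delta$ points along any slide. The main obstacle I expect is the cardinal bookkeeping rather than the geometry: one must check that removing exactly $d-1$ vectors lowers the count from $N'$ to the value $N$ matching $P(n)$ while the budget $2^{\aleph_0}>\aleph_{\delta+n+1}$ simultaneously supplies a slide set of the regular cardinal $\aleph_{\delta+n+2}$ and still leaves room ($2^{\aleph_0}>\aleph_{\delta+n}$) to invoke $P(n)$ — and the regularity of $\aleph_{\delta+n+2}$ is precisely what drives the pigeonhole that closes the argument.
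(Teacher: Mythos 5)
Your proof is correct and follows essentially the same route as the paper's: induction on \( n \), with the inductive step sliding the witness set from the previous stage along a direction orthogonal to a batch of \( d - 1 \) of the vectors and closing with a pigeonhole argument on the slide set. The only differences are minor streamlinings: you prove the base case directly for \( d \) vectors instead of invoking the stronger Theorem~\ref{th:EJM-Rd} for \( 2 ( d - 1 ) \) vectors, and you reach the final contradiction inside the single hyperplane \( H_{\mathbf{u}_k} ( \mathbf{p} + \bar{z} ) \), which lets you dispense with the difference-set condition of Lemma~\ref{lem:setofdifferenceinabeliangroup} that the paper imposes on \( S \).
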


Let us draw some consequences from Theorem~\ref{th:hyperplanes=>bound}.

\begin{theorem}\label{th:hyperplanes<=>bound}
Let \( \delta \in \On \), let \( n \geq 1 \), and let \( d \geq 3 \).
Let also \( ( n + 1 ) ( d - 1 ) < N \leq ( n + 2 ) ( d - 1 ) \).
For all \( D \subseteq \R^d \) such that \( \Int ( D ) \neq \emptyset \), the following are equivalent:
\begin{enumerate}[label={\upshape (\alph*)}]
\item\label{th:hyperplanes<=>bound-a}
\( 2^{\aleph_0 } \leq \aleph_{ \delta + n } \).
\item\label{th:hyperplanes<=>bound-b}
For all \( \mathbf{u}_1 , \dots , \mathbf{u}_{ ( n + 1 ) ( d - 1 ) + 1 } \) in general position there are \( A_1 , \dots , A_{ ( n + 1 ) ( d - 1 ) + 1 } \) covering \( D \) such that \( \card{ A_k \cap H_{ \mathbf{u}_k } ( \mathbf{p} ) } < \aleph_ \delta \) for all \( \mathbf{p} \in \R^d \) and all \( 1 \leq k \leq ( n + 1 ) ( d - 1 ) + 1 \).
\item\label{th:hyperplanes<=>bound-c}
For all distinct non-zero vectors \( \mathbf{u}_1 , \dots , \mathbf{u}_N \) such that \( ( n + 1 ) ( d - 1 ) + 1 \) of them are in general position, there are \( A_1 , \dots , A_N \) covering \( D \) such that \( \card{ A_k \cap H_{ \mathbf{u}_k } ( \mathbf{p} ) } < \aleph_ \delta \) for all \( \mathbf{p} \in \R^d \) and all \( 1 \leq k \leq N \).
\item\label{th:hyperplanes<=>bound-d}
There are distinct non-zero vectors \( \mathbf{u}_1 , \dots , \mathbf{u}_N \) and there are \( A_1 , \dots , A_N \) covering \( D \) such that \( \card{ A_k \cap H_{ \mathbf{u}_k } ( \mathbf{p} ) } < \aleph_ \delta \) for all \( \mathbf{p} \in \R^d \) and all \( 1 \leq k \leq N \).
\end{enumerate}
\end{theorem}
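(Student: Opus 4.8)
The plan is to establish the four statements equivalent through the cycle \ref{th:hyperplanes<=>bound-a}\(\IMPLIES\)\ref{th:hyperplanes<=>bound-b}\(\IMPLIES\)\ref{th:hyperplanes<=>bound-c}\(\IMPLIES\)\ref{th:hyperplanes<=>bound-d}\(\IMPLIES\)\ref{th:hyperplanes<=>bound-a}, with all the content concentrated in the last implication. For \ref{th:hyperplanes<=>bound-a}\(\IMPLIES\)\ref{th:hyperplanes<=>bound-b}: given \( (n+1)(d-1)+1 \) vectors in general position, Example~\ref{xmp:well-spread} shows that the families \( \mathcal{H}_i \) of hyperplanes orthogonal to \( \mathbf{u}_i \) are pairwise disjoint, locally finite, and of mesh \( d \); so Theorem~\ref{th:EJM2} with \( r=d \) (in its ordinal form) under \( 2^{\aleph_0}\leq\aleph_{\delta+n} \) yields \( A_1,\dots,A_{(n+1)(d-1)+1} \) covering \( \R^d\supseteq D \) with \( \card{A_i\cap H_{\mathbf{u}_i}(\mathbf{p})}<\aleph_\delta \). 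For \ref{th:hyperplanes<=>bound-b}\(\IMPLIES\)\ref{th:hyperplanes<=>bound-c}: among the \( N \) vectors pick the \( (n+1)(d-1)+1 \) that are in general position, apply \ref{th:hyperplanes<=>bound-b} to them, keep the resulting sets on those indices, and set \( A_k=\emptyset \) on the remaining indices; the union is unchanged and the empty sets satisfy the intersection bound trivially. Finally \ref{th:hyperplanes<=>bound-c}\(\IMPLIES\)\ref{th:hyperplanes<=>bound-d} only requires exhibiting one admissible tuple, e.g.\ \( (n+1)(d-1)+1 \) vectors in general position together with \( N-(n+1)(d-1)-1 \) further distinct non-zero vectors.

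The crux is \ref{th:hyperplanes<=>bound-d}\(\IMPLIES\)\ref{th:hyperplanes<=>bound-a}, which I prove by contraposition: assume \( 2^{\aleph_0}>\aleph_{\delta+n} \), hence \( 2^{\aleph_0}\geq\aleph_{\delta+n+1} \), and show that no family of \( N \) distinct non-zero vectors with small-intersection sets covers \( D \). As in the proof of Theorem~\ref{th:EJM-Rd-1}, it suffices to build, for every \( \varepsilon>0 \), a set \( Z\subseteq(-\varepsilon;\varepsilon)^d \) of size \( \aleph_{\delta+n+1} \) with \( \mathbf{p}+Z\nsubseteq\bigcup_{k=1}^N A_k \) for all \( \mathbf{p} \); placing a translate of \( Z \) inside a box contained in \( \Int(D) \) then contradicts the covering. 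If the \( \mathbf{u}_k \) fail to span \( \R^d \), a non-zero \( \mathbf{v} \) orthogonal to all of them settles it at once: a segment of \( \aleph_{\delta+n+1} \) points in direction \( \mathbf{v} \) lies in a single hyperplane of each family, so it meets \( \bigcup_k A_k \) in fewer than \( \aleph_\delta \) points (this is Case~1 of Theorem~\ref{th:EJM-Rd}, and it handles arbitrarily many vectors). So we may assume the vectors span, and, after the change of coordinates from Lemma~\ref{lem:changecoordinatestogetorthogonalplanes}, that \( \mathbf{u}_1,\dots,\mathbf{u}_d \) is the standard basis. When \( N=(n+1)(d-1)+1 \) this is exactly Theorem~\ref{th:hyperplanes=>bound}, which produces the required \( Z \).

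It remains to handle the spanning case with \( N>(n+1)(d-1)+1 \), and here the upper bound \( N\leq(n+2)(d-1) \) is decisive: the number of ``surplus'' directions beyond the breakpoint count \( (n+1)(d-1)+1 \) is at most \( d-2 \), so the subspace they span has dimension \( \leq d-2 \) and its orthogonal complement has dimension \( \geq 2 \). The plan is to run the construction behind Theorem~\ref{th:hyperplanes=>bound}—an \( n \)-fold elaboration of the explicit construction given in Case~2 of Theorem~\ref{th:EJM-Rd}—while choosing its free linear direction(s) inside this orthogonal complement, exactly as the vector \( \mathbf{v} \) there is taken orthogonal to the extra vectors \( \mathbf{u}_{d+1},\dots,\mathbf{u}_N \). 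Along such a direction every translate of \( Z \) lies in a single hyperplane orthogonal to each surplus vector, so each surplus set contributes fewer than \( \aleph_\delta \) points to \( \mathbf{p}+Z \), and the surplus sets cannot repair the failure of the core \( (n+1)(d-1)+1 \) sets to cover \( \mathbf{p}+Z \). I expect this to be the main obstacle: one must weave the orthogonal free-direction device of Theorem~\ref{th:EJM-Rd} into the higher-level product construction of Theorem~\ref{th:hyperplanes=>bound}, and ensure that the core leaves an uncovered remainder of full size \( \aleph_{\delta+n+1} \)—rather than a single uncovered point—so that the small surplus sets cannot absorb it. Naive alternatives, such as reducing the dimension by restricting to a hyperplane orthogonal to one \( \mathbf{u}_k \), do not work, since each such step removes one vector but lowers the admissible breakpoint by \( n+1 \), thereby worsening rather than eliminating the surplus.
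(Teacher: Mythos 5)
Your cycle \ref{th:hyperplanes<=>bound-a}\(\IMPLIES\)\ref{th:hyperplanes<=>bound-b}\(\IMPLIES\)\ref{th:hyperplanes<=>bound-c}\(\IMPLIES\)\ref{th:hyperplanes<=>bound-d}\(\IMPLIES\)\ref{th:hyperplanes<=>bound-a} is the paper's, and your treatment of the last implication agrees with the paper in the non-spanning case and when \( N = (n+1)(d-1)+1 \). The gap is in the remaining case, with up to \( d-2 \) surplus directions beyond the core. Your key assertion --- ``along such a direction every translate of \( Z \) lies in a single hyperplane orthogonal to each surplus vector, so each surplus set contributes fewer than \( \aleph_\delta \) points to \( \mathbf{p}+Z \)'' --- is false for the \( Z \) produced by Theorem~\ref{th:hyperplanes=>bound}: that \( Z \) has the form \( Y_n + \dots + Y_1 + \prod_i X_i \) and is contained in no hyperplane; only its \emph{fibres} along the free direction are. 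A surplus set meets each fibre in \( < \aleph_\delta \) points, but there are \( \aleph_{\delta+n+1} \) fibres, so it may meet \( \mathbf{p}+Z \) in \( \aleph_{\delta+n+1} \) points, i.e.\ possibly all of it. Peeling the surplus sets off at the top level by the usual pigeonhole would need the free direction's index set to have size exceeding \( \aleph_{\delta+n+1} \), hence \( \aleph_{\delta+n+2} \), which requires \( 2^{\aleph_0} \geq \aleph_{\delta+n+2} \) --- one cardinal more than you have. Your fallback (``ensure the core leaves an uncovered remainder of full size \( \aleph_{\delta+n+1} \)'') is precisely the missing content: the construction only certifies one uncovered point per translate, and you do not show how to get more. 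Note also that a free direction of an inductive step must already be orthogonal to the \( d-1 \) new vectors of that step; asking it to lie additionally in the orthogonal complement of the surplus span means being orthogonal to up to \( 2d-3 \geq d \) vectors, which need not be possible.

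The resolution (which the paper itself leaves implicit --- its citation of Theorem~\ref{th:hyperplanes=>bound} in \ref{th:hyperplanes<=>bound-d}\(\IMPLIES\)\ref{th:hyperplanes<=>bound-a} is for the wrong \( N \)) is to absorb the surplus at the \emph{bottom} of the induction, not the top. Theorem~\ref{th:EJM-Rd} already handles \( 2(d-1) = d + (d-2) \) vectors under \( 2^{\aleph_0} > \aleph_\delta \): its \( d-2 \) extras are dealt with by a vector \( \mathbf{v} \) which its Claim shows can be chosen orthogonal simultaneously to the extras \emph{and} to one basis vector while not collinear with any \( \mathbf{e}_i \), and they are peeled off by the interleaved pigeonhole of Claim~\ref{cl:th:EJM-Rd-3} (after \( A_d \), before \( A_1, \dots, A_{d-1} \)), where \( S \) still has the spare cardinal \( \aleph_{\delta+1} \) to play against \( \card{X_i} = \aleph_\delta \). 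Rerunning the induction of Theorem~\ref{th:hyperplanes=>bound} for the statement ``\( (n+2)(d-1) \) vectors at level \( n \)'' --- base case \( 2(d-1) \), then \( d-1 \) new vectors per step, each batch spanning dimension \( \leq d-1 \) so an orthogonal \( \mathbf{v} \) exists --- yields exactly the bound \( N \leq (n+2)(d-1) \) in the theorem. Your split into a core of \( (n+1)(d-1)+1 \) plus a surplus of \( \leq d-2 \) does not match this grouping: since \( (n+1)(d-1)+1 = d + n(d-1) \), the surplus must rejoin the first \( d \) vectors to form the \( 2(d-1) \) of the base case, and that structural mismatch is where your argument breaks.
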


\begin{proof}
The implication \ref{th:hyperplanes<=>bound-a}\( \IMPLIES \)\ref{th:hyperplanes<=>bound-b} follows from the Erd\H{o}s-Jackson-Mauldin Theorem~\ref{th:EJM2} and Example~\ref{xmp:well-spread}.

\smallskip

The implication \ref{th:hyperplanes<=>bound-b}\( \IMPLIES \)\ref{th:hyperplanes<=>bound-c} is easy.
Let \( \mathbf{u}_1 , \dots , \mathbf{u}_N \) be as in~\ref{th:hyperplanes<=>bound-c}.
Without loss of generality we may assume that \( \mathbf{u}_1 , \dots , \mathbf{u}_{ ( n + 1 ) ( d - 1 ) + 1 } \) are in general position in \( \R^d \), and let \( A_1 , \dots , A_{ ( n + 1 ) ( d - 1 ) + 1 } \) be as in~\ref{th:hyperplanes<=>bound-b}.
Letting \( A_{ ( n + 1 ) ( d - 1 ) + 2} = \dots = A_N = \emptyset \) we have sets as in~\ref{th:hyperplanes<=>bound-c}.

\smallskip

The implication \ref{th:hyperplanes<=>bound-c}\( \IMPLIES \)\ref{th:hyperplanes<=>bound-d} is trivial, as any set of vectors in general position spans the space, and any covering of \( \R^d \) is a covering of \( D \). 

\smallskip

Assume~\ref{th:hyperplanes<=>bound-d} and towards a contradiction suppose that \( 2^{\aleph_0} > \aleph_{ n + 1 } \).
By a translation, we may assume that \( ( - \varepsilon ; \varepsilon )^d \subseteq D \).
The hypotheses of Theorem~\ref{th:hyperplanes=>bound} are satisfied so there is a set \( Z \subseteq D \) that is not contained in \(\bigcup_{ k = 1 }^N A_k \), so the sequence \( A_1 , \dots , A_ N \) does not cover \( D \).
\end{proof}

When \( d = 3 \) Theorem~\ref{th:hyperplanes<=>bound} yields the following results---recall that vectors in \( \R^3 \) are in general position if any three of them are linearly independent.
First we consider the case when the intersection of a plane with a set is finite.

\begin{corollary}\label{cor:hyperplanes=>CH-1}
Let \( D \subseteq \R^3 \) be such that \( \emptyset \neq \Int ( D ) \), and let \( n \geq 1 \).
The following are equivalent:
\begin{enumerate}[label={\upshape (\alph*)}]
\item
 \( 2^{\aleph_0 } \leq \aleph_n \);
\item
for all \( \mathbf{u}_1 , \dots , \mathbf{u}_{ 2 n + 3 } \) vectors in general position in \( \R^3 \), then there are \( A_1 , \dots , A_{ 2 n + 3 } \) covering \( D \) so that every plane orthogonal to \( \mathbf{u}_i \) intersects \( A_i \) in a finite set, for all \( 1 \leq i \leq 2 n + 3 \);
\item
there are non-zero vectors \( \mathbf{u}_1 , \dots , \mathbf{u}_{ 2 n + 4 } \) in \( \R^3\), and sets \( A_1 , \dots , A_{ 2 n + 4 } \) covering \( D \) such that every plane orthogonal to \( \mathbf{u}_i \) intersects \( A_i \) in a finite set, for all \( 1 \leq i \leq 2 n + 4 \).
\end{enumerate}
\end{corollary}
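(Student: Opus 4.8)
The plan is to read this off as the special case $d = 3$, $\delta = 0$ of Theorem~\ref{th:hyperplanes<=>bound}. Taking $\delta = 0$ turns the hypothesis $\card{A_k \cap H_{\mathbf{u}_k}(\mathbf{p})} < \aleph_\delta$ into the requirement that $A_k$ meet every plane orthogonal to $\mathbf{u}_k$ in a finite set, and turns the conclusion $2^{\aleph_0} \leq \aleph_{\delta + n}$ into $2^{\aleph_0} \leq \aleph_n$; thus item~(a) of the corollary is exactly item~\ref{th:hyperplanes<=>bound-a} of the theorem. Since $(n+1)(d-1) + 1 = 2n+3$ when $d = 3$, item~(b) of the corollary is verbatim item~\ref{th:hyperplanes<=>bound-b}.

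For item~(c) I would set $N = 2n+4$. This is admissible since $(n+1)(d-1) = 2n+2 < 2n+4 = (n+2)(d-1)$, so $N$ lies in the range $(n+1)(d-1) < N \leq (n+2)(d-1)$ demanded by the theorem. With this choice item~(c) is item~\ref{th:hyperplanes<=>bound-d}, save for one discrepancy: the theorem asks the $2n+4$ vectors to be \emph{distinct}, whereas the corollary only asks them to be nonzero. Granting this match, the equivalences \ref{th:hyperplanes<=>bound-a}~$\IFF$~\ref{th:hyperplanes<=>bound-b}~$\IFF$~\ref{th:hyperplanes<=>bound-d} provided by Theorem~\ref{th:hyperplanes<=>bound} give the three equivalences claimed.

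The sole point requiring care is this distinctness bookkeeping, which I expect to be the only (and minor) obstacle. Starting from a witness to item~(c), I would first merge repeated directions: if $\mathbf{u}_i = \mathbf{u}_j$ then $A_i \cup A_j$ still meets every plane orthogonal to that common direction in a finite set, so collapsing duplicates leaves $N' \leq 2n+4$ distinct nonzero vectors together with covering sets of finite planar section. As $\R^3$ has continuum-many directions, I then pad back up to exactly $2n+4$ by choosing $2n+4 - N'$ further distinct nonzero directions disjoint from those already used and assigning each the empty set; empty sets meet every plane in a finite set and leave the union unchanged, so the covering of $D$ survives. This produces precisely the distinct configuration required by item~\ref{th:hyperplanes<=>bound-d}, whence the theorem yields $2^{\aleph_0} \leq \aleph_n$. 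The forward implications are obtained the same way: to pass from item~(b) to item~(c) I would fix one family of $2n+3$ vectors in general position, cover $D$ using item~(b), and adjoin a single fresh nonzero direction carrying the empty set to reach $2n+4$ vectors; the implication from item~(a) to item~(b) is then just the corresponding implication of the theorem. Since all the genuine content already resides in Theorem~\ref{th:hyperplanes<=>bound}, no further difficulty is anticipated.
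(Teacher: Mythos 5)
Your proposal is correct and follows exactly the paper's route: the corollary is obtained as the instance $d=3$, $\delta=0$ of Theorem~\ref{th:hyperplanes<=>bound}, with $2n+3=(n+1)(d-1)+1$ for item (b) and $N=2n+4=(n+2)(d-1)$ for item (c). Your explicit handling of the mismatch between ``distinct'' in item~\ref{th:hyperplanes<=>bound-d} and merely ``non-zero'' in the corollary's item (c) --- merging sets attached to repeated directions and padding with empty sets on fresh directions --- is a minor bookkeeping step the paper leaves implicit, and it is done correctly.
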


In particular~\cite{Erdos:1994yq,Simms:1997aa} \( \CH \) is equivalent to 
\begin{itemize}
\item
for all \( \mathbf{u}_1 , \dots , \mathbf{u}_5 \) in general position there are \( A_1 , \dots , A_5 \) covering \( \R^3 \) such that any plane orthogonal to \( \mathbf{u}_i \) has finite intersection with \( A_i \),
\item
there are non-zero vectors \( \mathbf{u}_1 , \dots , \mathbf{u}_6 \) and \( A_1 , \dots , A_6 \) covering \( \R^3 \) such that any plane orthogonal to \( \mathbf{u}_i \) has finite intersection with \( A_i \).
\end{itemize}

If the intersections between planes and sets are taken to be countable we have a result analogous to Corollary~\ref{cor:hyperplanes=>CH-1}, with a shift of \( 2 \) in the number of vectors/pieces.

\begin{corollary}
Let \( D \subseteq \R^3 \) be such that \( \emptyset \neq \Int ( D ) \), let \( n \geq 1 \) and let \( N \) be such that \( 2 n + 2 \leq N \).
The following are equivalent:
\begin{enumerate}[label={\upshape (\alph*)}]
\item
 \( 2^{\aleph_0 } \leq \aleph_n \);
\item
for all \( \mathbf{u}_1 , \dots , \mathbf{u}_{ 2 n + 1} \) vectors in general position there are \( A_1 , \dots , A_{ 2 n + 1 } \) covering \( D\) so that every plane orthogonal to \( \mathbf{u}_k \) intersects \( A_k \) in a countable set, for all \( 1 \leq k \leq 2 n + 1 \);
\item
for all vectors \( \mathbf{u}_1 , \dots , \mathbf{u}_{ 2 n + 2 } \) non-zero vectors such that at least \( 2 n + 1 \) of them are in general position, there are \( A_1 , \dots , A_{ 2 n + 2 } \) covering \( D \) such that every plane orthogonal to \( \mathbf{u}_k \) intersects \( A_k \) in a countable set, for all \( 1 \leq k \leq 2 n + 2 \);
\item
there are vectors \( \mathbf{u}_1 , \dots , \mathbf{u}_{ 2 n + 2 } \) and sets \( A_1 , \dots , A_{ 2 n + 2 } \) covering \( D \) such that every plane orthogonal to \( \mathbf{u}_k \) intersects \( A_k \) in a countable set, for all \( 1 \leq k \leq 2 n + 2 \).
\end{enumerate}
\end{corollary}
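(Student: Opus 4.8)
The plan is to read this off from Theorem~\ref{th:hyperplanes<=>bound} after a single change of parameters, together with Theorem~\ref{th:EJM-Rd-2} to cover the extreme case. Specialize Theorem~\ref{th:hyperplanes<=>bound} to \( d = 3 \) and \( \delta = 1 \), so that ``\( < \aleph_\delta \)'' reads ``countable'', and run it with its index \( n \) replaced by \( n - 1 \). First I would record the four resulting numerical coincidences: the governing cardinal in clause~\ref{th:hyperplanes<=>bound-a} becomes \( \aleph_{\delta + (n-1)} = \aleph_{n} \), matching (a); the number of general-position directions in clause~\ref{th:hyperplanes<=>bound-b} becomes \( ((n-1)+1)(d-1) + 1 = 2n + 1 \), matching (b); and the total count \( N = ((n-1)+2)(d-1) = 2n + 2 \) sits in the admissible interval \( ( 2n , 2n + 2 ] \) and matches the count used in (c) and (d). Since in \( \R^3 \) ``general position'' just means ``any three are linearly independent'', clauses~\ref{th:hyperplanes<=>bound-a}--\ref{th:hyperplanes<=>bound-d} of the theorem then transcribe verbatim into clauses (a)--(d) of the corollary, with (c) obtained from (b) by padding with empty pieces exactly as in the proof of Theorem~\ref{th:hyperplanes<=>bound}.

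The one place where this substitution is not literally licensed is the boundary value \( n = 1 \), where the shifted index \( n - 1 = 0 \) violates the hypothesis \( n \geq 1 \) of Theorem~\ref{th:hyperplanes<=>bound}. Here clause (a) is precisely \( \CH \), and I would instead quote Theorem~\ref{th:EJM-Rd-2} at \( d = 3 \): its clause~\ref{th:EJM-Rd-2-b} concerns \( d = 3 \) vectors spanning \( \R^3 \)---equivalently three vectors in general position, i.e. \( 2n+1 = 3 \)---and its clause~\ref{th:EJM-Rd-2-c} concerns \( 2d - 2 = 4 = 2n+2 \) vectors. This yields clauses (b) and (d) of the corollary for \( n = 1 \), and (c) again follows by adjoining an empty set.

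All of the genuine mathematics is already encapsulated: the forward existence statements flow from \( 2^{\aleph_0} \leq \aleph_n \) through the Erd\H{o}s-Jackson-Mauldin Theorem~\ref{th:EJM2} and Example~\ref{xmp:well-spread}, and the reversals rest on the non-coverage Theorem~\ref{th:hyperplanes=>bound}, both of which are packaged inside Theorems~\ref{th:hyperplanes<=>bound} and~\ref{th:EJM-Rd-2}. The only obstacle---and it is purely bookkeeping---is to verify that the index shift sends \( \delta + (n-1) \) to \( n \) and that the two counts \( 2n+1 \) and \( 2n+2 \) land, respectively, on the general-position count and inside the range \( 2n < 2n+2 \leq 2(n+1) \) demanded by the theorem; once these identities are checked and the case \( n = 1 \) is routed through Theorem~\ref{th:EJM-Rd-2}, the equivalence is established.
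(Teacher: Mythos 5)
Your proof is correct and follows exactly the route the paper intends: the corollary is the specialization of Theorem~\ref{th:hyperplanes<=>bound} to \( d = 3 \), \( \delta = 1 \), with the theorem's index shifted to \( n - 1 \), and the numerics \( 2n+1 \) and \( 2n+2 \) land where you say they do. Your explicit handling of the boundary case \( n = 1 \) (where the shifted index \( 0 \) falls outside the theorem's stated hypothesis \( n \geq 1 \)) via Theorem~\ref{th:EJM-Rd-2} is a point of care the paper glosses over, but it changes nothing of substance.
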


The tables in Figure~\ref{fig:1} summarize the results proved so far.
The table on the left, for any dimension \( d \) and any \( n \) such that \( 2^{\aleph_0} \leq \aleph_n \), gives \( ( n + 1 ) ( d - 1 ) + 1 \) the minimum number of vectors/pieces of a covering of \( \R^d \) so that each piece has \emph{finite} intersection with any hyperplane orthogonal to the given vector.
The maximum number for such decomposition is \( ( n + 2 ) ( d - 1 ) \), the integer in the square with coordinates \( ( d , n + 1 ) \) decreased by \( 1 \).
The table on the right is similar, but the intersections with the hyperplanes are \emph{countable}.

\begin{figure}
\hfil\begin{tabular}{c|c|c|c|c|c}
\diagbox{\( d \)}{\( n \)} & \( 1 \) & \( 2 \) & \( 3 \) & \( 4 \) & \ldots
\\
\hline
\( 2 \) & \( 3 \) & \( 4 \) & \( 5 \) & \( 6 \) & \ldots 
\\
\hline
\( 3 \) & \( 5 \) & \( 7 \) & \( 9 \) & \( 11 \) & \ldots
\\
\hline
\( 4 \) & \( 7 \) &\( 10 \) & \( 13 \) & \( 16 \) & \ldots 
\\
\hline
\( 5 \) & \( 9 \) & \( 13 \) & \( 16 \) & \( 20 \) & \ldots
\\
\hline
\( \vdots \) & & &&&\( \ddots \)
\end{tabular}
\hfil
\begin{tabular}{c|c|c|c|c|c}
\diagbox{\( d \)}{\( n \)} & \( 1 \) & \( 2 \) & \( 3 \) & \( 4 \) & \ldots
\\
\hline
\( 2 \) & \( 2 \) & \( 3 \) & \( 4 \) & \( 5 \) & \ldots 
\\
\hline
\( 3 \) & \( 3 \) & \( 5 \) & \( 7 \) & \( 9 \) & \ldots
\\
\hline
\( 4 \) & \( 4 \) &\( 7 \) & \( 10 \) & \( 13 \) & \ldots 
\\
\hline
\( 5 \) & \( 5 \) & \( 9 \) & \( 13 \) & \( 17 \) & \ldots
\\
\hline
\( \vdots \) & & &&&\( \ddots \)
\end{tabular}\hfil
 \caption{Tables of the minimum number of pieces of a decomposition of \( \R^d \) equivalent to \( 2^{\aleph_0} \leq \aleph_n \): the one on the left for \emph{finite} intersections, the one on the right for \emph{countable} intersections.}
 \label{fig:1}
\end{figure}

\begin{proof}[Proof of Theorem~\ref{th:hyperplanes=>bound}]
Fix \( \delta \in \On \) and \( d \geq 3 \), and proceed by induction on  \( n \).

The case \( n = 0 \) is Theorem~\ref{th:EJM-Rd}, so assume the result holds for some \( \bar{n} \) towards proving it for \( \bar{n} + 1 \).

Suppose \( 2^{\aleph_0} > \aleph_{ \delta + \bar{n} + 1 } \).
Let \( N = ( \bar{n} + 2 ) ( d - 1 ) + 1 \), and let \( \mathbf{u}_1 , \dots , \mathbf{u}_ N \) and \( A_1 , \dots , A_ N \) be as in the statement.
Given \( \varepsilon > 0 \) we must construct \( Z = Z_{ \bar{n} + 1 , \varepsilon } \) such that 
\[ 
Z \subseteq ( - \varepsilon ; \varepsilon )^d , \quad \card{Z} = \aleph_{ \delta + \bar{n} + 2 } , \quad \FORALL{ \mathbf{p} \in \R^d } \bigl ( \mathbf{p} + Z \nsubseteq \textstyle \bigcup_{k = 1 }^N A_k \bigr ) .
\] 
Let \( \bar{N} = ( \bar{n} + 1 ) ( d - 1 ) + 1 \).
Then \( \mathbf{u}_1 , \dots , \mathbf{u}_{\bar{N} } \) and \( A_1 , \dots , A_{ \bar{N} } \) satisfy the hypotheses of the statement for \( \bar{n} \).
By inductive assumption there is \( \bar{Z} = Z_{ \bar{n} , \varepsilon / 2 } \subseteq ( - \varepsilon / 2 ; \varepsilon / 2 )^d \) of size \( \aleph_{ \delta + \bar{n} + 1 } \) such that 
\begin{equation}\label{eq:th:hyperplanes=>bound-1}
 \FORALL{ \mathbf{p} \in \R^d } \bigl ( \mathbf{p} + \bar{Z} \nsubseteq \textstyle \bigcup_{ k = 1 }^{\bar{N}} A_k \bigr ) .
\end{equation}
As \( N - \bar{N} = d - 1 \), there is a non-zero vector \( \mathbf{v} \) such that 
\begin{equation}\label{th:hyperplanes=>bound-2}
 \bar{N} < k \leq N \IMPLIES \mathbf{v} \boldsymbol{\cdot} \mathbf{u}_k = 0 . 
\end{equation}
The subspace \( V = \setof{ r \mathbf{v}}{ r \in \R } \) is of cardinality \( 2^{\aleph_0 } \geq \aleph_{ \delta + \bar{n} + 2 } \), and \( X = \setof{ r \in \R }{ r \mathbf{v} \in \bar{Z}} \) is of cardinality \( \aleph_{ \delta + \bar{n} + 1 } \), so by Lemma~\ref{lem:setofdifferenceinabeliangroup} we obtain \( S \subseteq ( - \varepsilon / 2 ; \varepsilon / 2 ) \) of size \( \aleph_{ \delta + \bar{n} + 2 } \) such that \( ( S - S ) \cap ( X - X ) = \set{0} \).
Letting \( Y = \setof{ s \mathbf{v} }{ s \in S } \) then \( \card{Y} = \aleph_{ \delta + \bar{n} + 2 } \) and \( Y \subseteq ( - \varepsilon / 2 ; \varepsilon / 2 )^d \) and 
\[
 ( Y - Y ) \cap ( \bar{Z} - \bar{Z} ) = \setLR{\mathbf{0}} .
\]
Let 
\[
 Z \coloneqq Y + \bar{Z}= \textstyle \bigcup_{ \mathbf{y} \in Y} \mathbf{y} + \bar{Z} = \bigcup_{ \mathbf{z} \in \bar{Z} } \mathbf{z} + Y .
\]
Observe that \( Z \subseteq ( - \varepsilon ; \varepsilon )^d \) and is of cardinality \( \aleph_{ \delta + \bar{n} + 2 } \).
We must argue that \( \mathbf{p} + Z \nsubseteq \bigcup_{ k = 1 }^ N A_k \) for all \( \mathbf{p} \in \R^d \).

Towards a contradiction, let \( \hat{\mathbf{p}} \in \R^d \) be such that \( \hat{\mathbf{p}} + Z \subseteq \bigcup_{ k = 1 }^N A_k \).

\begin{claim}\label{cl:th:hyperplanes=>bound-1}
If \( \mathbf{y} , \mathbf{y}' \in Y \) and \( ( \mathbf{y} + \bar{Z} ) \cap ( \mathbf{y}' + \bar{Z} ) \neq \emptyset \) then \( \mathbf{y} = \mathbf{y}' \).
\end{claim}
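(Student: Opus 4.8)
The plan is to deduce the claim directly from the almost-disjointness property $(Y - Y) \cap (\bar{Z} - \bar{Z}) = \setLR{\mathbf{0}}$ established immediately before it, in the same spirit as Claim~\ref{cl:th:EJM-Rd-1} in the base case, only more cheaply: here the relevant difference-set condition is already available as a single equation, rather than having to be read off one coordinate at a time.

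Concretely, I would assume the hypothesis that $(\mathbf{y} + \bar{Z}) \cap (\mathbf{y}' + \bar{Z}) \neq \emptyset$ and fix a point $\mathbf{w}$ in this intersection. By definition of the two translates there are $\mathbf{z}, \mathbf{z}' \in \bar{Z}$ with $\mathbf{w} = \mathbf{y} + \mathbf{z} = \mathbf{y}' + \mathbf{z}'$. Rearranging yields $\mathbf{y} - \mathbf{y}' = \mathbf{z}' - \mathbf{z}$, where the left-hand side lies in $Y - Y$ and the right-hand side lies in $\bar{Z} - \bar{Z}$. Hence their common value belongs to $(Y - Y) \cap (\bar{Z} - \bar{Z}) = \setLR{\mathbf{0}}$, which forces $\mathbf{y} = \mathbf{y}'$, exactly the assertion.

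I do not anticipate a genuine obstacle: the real content has been front-loaded into the selection of $S$ via Lemma~\ref{lem:setofdifferenceinabeliangroup}, which is precisely what forces the translates of $\bar{Z}$ along $Y$ apart. The one point to keep straight is that the property being invoked concerns the \emph{full} difference sets, so the two witnesses $\mathbf{z}$ and $\mathbf{z}'$ need not coincide; allowing them to differ is what makes the single line $\mathbf{y} - \mathbf{y}' = \mathbf{z}' - \mathbf{z}$ land inside $\bar{Z} - \bar{Z}$. This disjointness is in turn what licenses writing $\mathbf{p} + Z$ as the disjoint union $\bigcup_{\mathbf{y} \in Y}(\mathbf{p} + \mathbf{y} + \bar{Z})$ and then applying the inductive hypothesis on each fibre.
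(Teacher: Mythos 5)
Your argument is exactly the paper's: fix a common point, write it as $\mathbf{y}+\mathbf{z}=\mathbf{y}'+\mathbf{z}'$, and conclude $\mathbf{y}-\mathbf{y}'=\mathbf{z}'-\mathbf{z}\in (Y-Y)\cap(\bar{Z}-\bar{Z})=\set{\mathbf{0}}$. Correct, and identical in approach to the proof in the text.
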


\begin{proof}
Suppose \( \mathbf{y} + \mathbf{z} = \mathbf{y}' + \mathbf{z}' \) with \( \mathbf{y} , \mathbf{y}' \in Y \) and \( \mathbf{z} , \mathbf{z}' \in \bar{Z} \).
Then \( \mathbf{y} - \mathbf{y} ' = \mathbf{z}' - \mathbf{z} \in ( Y - Y ) \cap ( \bar{Z} - \bar{Z} ) = \set{ \mathbf{0} } \), so \( \mathbf{y} = \mathbf{y}' \).
\end{proof}

Recall that \( Y \subseteq V \) and by~\eqref{th:hyperplanes=>bound-2} \( V \) is contained in \( H_{\mathbf{u}_k } ( \mathbf{0} ) \), for \( \bar{N} < k \leq N \).
By assumption \( \card{ A_k \cap H_{\mathbf{u}_k } ( \mathbf{q} ) } < \aleph_ { \delta } \), so \( \card{ A_k \cap ( \mathbf{q} + Y ) } < \aleph_ { \delta } \), for each \( \mathbf{q} \in \R^d \).
Therefore for any \( \bar{N} < k \leq N \),
\begin{equation}\label{th:hyperplanes=>bound-3}
\card{ A_k \cap ( \hat{\mathbf{p}} + Z ) } = \card{ A_k \cap ( \hat{\mathbf{p}} + Y + \bar{Z} ) } = \cardLR{ \textstyle \bigcup_{ \mathbf{z} \in \bar{Z} } A_k \cap ( \hat{\mathbf{p}} + \mathbf{z} + Y ) } \leq \aleph _{ \delta + \bar{n} + 1 } . 
\end{equation}

\begin{claim}\label{cl:th:hyperplanes=>bound-2}
There is \( \hat{\mathbf{y}} \in Y \) such that \( ( \hat{\mathbf{p}} + \hat{\mathbf{y}} + \bar{Z} ) \cap \bigcup_{ \bar{N} < k \leq N} A_k = \emptyset \).
\end{claim}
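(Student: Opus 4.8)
The plan is a clean cardinality count that plays the disjointness from Claim~\ref{cl:th:hyperplanes=>bound-1} against the smallness estimate~\eqref{th:hyperplanes=>bound-3}; the genuine work has already been done in establishing those two ingredients, so the claim itself should follow by pigeonhole. First I would bound the total size of the ``forbidden'' portion of \( \hat{\mathbf{p}} + Z \). There are exactly \( d - 1 \) indices \( k \) with \( \bar{N} < k \leq N \), and by~\eqref{th:hyperplanes=>bound-3} each of them meets \( \hat{\mathbf{p}} + Z \) in at most \( \aleph_{ \delta + \bar{n} + 1 } \) points. Hence the set
\[
 B \coloneqq ( \hat{\mathbf{p}} + Z ) \cap \textstyle \bigcup_{ \bar{N} < k \leq N } A_k
\]
has cardinality at most \( ( d - 1 ) \cdot \aleph_{ \delta + \bar{n} + 1 } = \aleph_{ \delta + \bar{n} + 1 } \).

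Next I would invoke the partition structure. By Claim~\ref{cl:th:hyperplanes=>bound-1} the translates \( \hat{\mathbf{p}} + \mathbf{y} + \bar{Z} \), for \( \mathbf{y} \in Y \), are pairwise disjoint, and their union is \( \hat{\mathbf{p}} + Z \); so each point of \( B \) lies in exactly one of them. Consequently the collection of those \( \mathbf{y} \in Y \) for which \( ( \hat{\mathbf{p}} + \mathbf{y} + \bar{Z} ) \cap B \neq \emptyset \) has size at most \( \card{B} \leq \aleph_{ \delta + \bar{n} + 1 } \).

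Finally, since \( \card{Y} = \aleph_{ \delta + \bar{n} + 2 } > \aleph_{ \delta + \bar{n} + 1 } \), there must exist some \( \hat{\mathbf{y}} \in Y \) with \( ( \hat{\mathbf{p}} + \hat{\mathbf{y}} + \bar{Z} ) \cap B = \emptyset \). As \( \hat{\mathbf{p}} + \hat{\mathbf{y}} + \bar{Z} \subseteq \hat{\mathbf{p}} + Z \), this is exactly the assertion \( ( \hat{\mathbf{p}} + \hat{\mathbf{y}} + \bar{Z} ) \cap \bigcup_{ \bar{N} < k \leq N } A_k = \emptyset \). I do not expect a real obstacle here; the only point requiring care is the cardinal arithmetic feeding into~\eqref{th:hyperplanes=>bound-3}, namely that a union indexed by \( \bar{Z} \) (of size \( \aleph_{ \delta + \bar{n} + 1 } \)) of sets each of size \( < \aleph_ \delta \) has size \( \leq \aleph_{ \delta + \bar{n} + 1 } \), and that aggregating over \( d - 1 \) indices leaves this cardinal unchanged.
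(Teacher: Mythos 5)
Your proposal is correct and uses exactly the same two ingredients as the paper's proof --- the disjointness of the translates \( \hat{\mathbf{p}} + \mathbf{y} + \bar{Z} \) from Claim~\ref{cl:th:hyperplanes=>bound-1} and the bound~\eqref{th:hyperplanes=>bound-3} --- combined by the same pigeonhole on \( \card{Y} = \aleph_{\delta + \bar{n} + 2} > \aleph_{\delta + \bar{n} + 1} \). The only difference is presentational: the paper argues by contradiction, stabilizing \( \mathbf{z}(\mathbf{y}) \) and \( k(\mathbf{y}) \) on a large subset of \( Y \), whereas you directly bound the set of ``bad'' \( \mathbf{y} \)'s; your direct count is, if anything, slightly cleaner.
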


\begin{proof}
Towards a contradiction, suppose that for all \( \mathbf{y} \in Y \) there are \( \mathbf{z} ( \mathbf{y} ) \in \bar{Z} \) and \( \bar{N} < k ( \mathbf{y} ) \leq N \) such that \( \hat{\mathbf{p}} + \mathbf{y} + \mathbf{z} ( \mathbf{y} ) \in A_{ k ( \mathbf{y} ) } \).
As 
\[
 \aleph_{ \delta + \bar{n} + 2 } = \card{Y} > \aleph_{ \delta + \bar{n} + 1 } =\bar{Z} > \card{\setof{ k }{ \bar{N} < k \leq N}} 
\] 
there is \( \hat{Y} \subseteq Y \) of size \( \aleph_{ \delta + \bar{n} + 2 } \), and \( \hat{\mathbf{z}} \in \bar{Z} \) and \( \bar{N} < \hat{k} \leq N \) such that \( \mathbf{z} ( \mathbf{y} ) = \hat{\mathbf{z}} \) and \( k ( \mathbf{y} ) = \hat{k} \) for all \( \mathbf{y} \in \hat{Y} \).
Therefore \( \FORALL{ \mathbf{y} \in \hat{Y} } ( \hat{\mathbf{p}} + \mathbf{y} + \hat{\mathbf{z}} \in A_{\hat{k}} ) \).
By Claim~\ref{cl:th:hyperplanes=>bound-1} the map \( \hat{Y} \to A_{\hat{k}} \), \( \mathbf{y} \mapsto \hat{\mathbf{p}} + \mathbf{y} + \hat{\mathbf{z}} \) is injective, so \( \card{ A_{\hat{k}} \cap ( \hat{\mathbf{p}} + Y + \bar{Z} ) } \geq \card{ \hat{Y} } = \aleph _{ \delta + \bar{n} + 2 } \), against~\eqref{th:hyperplanes=>bound-3}.
\end{proof}

Fix \( \hat{\mathbf{y}} \) as in Claim~\ref{cl:th:hyperplanes=>bound-2}.
Then \( ( \hat{\mathbf{p}} + \hat{ \mathbf{y} } ) + \bar{Z} \subseteq A_1 \cup \dots \cup A_{\bar{N}} \) against~\eqref{eq:th:hyperplanes=>bound-1}.
Having reached a contradiction, we conclude that \( \mathbf{p} + Z \nsubseteq \bigcup_{ k = 1 }^ N A_k \) for all \( \mathbf{p} \in \R^d \).
\end{proof}

\begin{remark}\label{rmk:furtherresults}
Theorems~\ref{th:EJM-Rd} and ~\ref{th:hyperplanes=>bound} (together with Theorem~\ref{th:EJM2}) provide a complete solution to Problem~\ref{prob:hyperplanes} when the vectors are in general position, yet some further generalizations are possible.
For the sake of readability we have opted for less generality, and here we would like to mention some of these extensions.
(The proof of these results will appear elsewhere.)

Focusing on \( d = 3 \) and \( \delta = 0 \), the requirement on the size of the intersections in Theorem~\ref{th:EJM-Rd} could be relaxed to
\begin{align*}
\card{ A_i \cap H_{ \mathbf{u}_i } ( \mathbf{p} ) } < \aleph_0 \quad i & = 1 , 2 , 
&
\card{ A_i \cap H_{ \mathbf{u}_i } ( \mathbf{p} ) } \leq \aleph_0 \quad i & = 3 , 4 ,
\end{align*}
strengthening a theorem of Bagemihl~\cite{Bagemihl:1959te} that there are no \( A_1 , A_2 , A_3 \) covering \( \R^3 \) so that all planes orthogonal to \( \mathbf{e}_2 , \mathbf{e}_3 \) have countable intersections with \( A_2 , A_3 \) and all planes orthogonal to \( \mathbf{e}_1 \) have finite intersections with \( A_1 \).
(By Theorem~\ref{th:Sierpinski} and Lemma~\ref{lem:changecoordinatestogetorthogonalplanes} \( \CH \) is equivalent to the fact that the sets \( A_i \cap H_{ \mathbf{u}_i } ( \mathbf{p} ) \) are countable for three distinct \( i \)s, if \( \mathbf{u}_1, \dots , \mathbf{u}_4 \) are in general position.)

Another possible generalization of Theorem~\ref{th:EJM-Rd} is that for any \( \mathbf{u}_1 , \dots , \mathbf{u}_n \in \Span ( \mathbf{e}_1 , \mathbf{e}_2 ) \setminus \setLR{\mathbf{0}} \) and \( \mathbf{v}_1 , \dots , \mathbf{v}_m \in \Span ( \mathbf{e}_1 , \mathbf{e}_3 ) \setminus \setLR{\mathbf{0}} \), there are no \( A_1 , A_2 , A_3 \), \( B_1 , \dots , B_n \), \( C_1 , \dots , C_m \) covering \( \R^3 \) such that each \( A_i \cap H_{\mathbf{e}_i } ( \mathbf{p} ) \), \( B_j \cap H_{\mathbf{u}_j } ( \mathbf{p} ) \), \( C_k \cap H_{ \mathbf{v}_k } ( \mathbf{p} ) \) is finite.
(Clearly \( ( \mathbf{e}_1 , \mathbf{e}_2 , \mathbf{e}_3 ) \) can be replaced with any other basis.)
This shows that Corollary~\ref{cor:hyperplanes=>CH-1} can fail badly, if the vectors are not in general position.

On the other hand, not being in general position does not preclude a positive result.
For example \( \CH \) is equivalent to the existence of \( A_1 , \dots , A_6 \) covering \( \R^3 \) such that each \( A_i \cap H_{\mathbf{u}_i } ( \mathbf{p} ) \) is finite, where \( \mathbf{u}_1 = \mathbf{e}_1 \), \( \mathbf{u}_2 = \mathbf{e}_2 \), \( \mathbf{u}_3 = \mathbf{e}_3 \), \( \mathbf{u}_4 \in \Span ( \mathbf{e}_1 , \mathbf{e}_2 ) \setminus \set{ \mathbf{0}} \), \( \mathbf{u}_5 \in \Span ( \mathbf{e}_1 , \mathbf{e}_4 ) \setminus \set{ \mathbf{0}} \), \( \mathbf{u}_6 \in \Span ( \mathbf{e}_2 , \mathbf{e}_3 ) \setminus \set{ \mathbf{0}} \).
\end{remark}

\section{The main results}\label{sec:The-main-results}

Combining Theorem~\ref{th:transfer} together with the results from Section~\ref{sec:Hyperplane-sections} we are ready to prove the results about sprays in \( \R^d \).

\begin{theorem}\label{th:no4spraysinR3}
Let \( \mathbf{c}_1 , \dots , \mathbf{c}_4 \) be four coplanar points in \( \R^3 \).
There are no \( X_1 , \dots , X_4 \) covering \( \R^3 \) such that \( X_i \) is a spray with centers \( \mathbf{c}_i \).
\end{theorem}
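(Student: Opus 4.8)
The plan is to convert the spray covering into the ``linear'' hyperplane-covering situation of Section~\ref{sec:Hyperplane-sections} by means of the transfer homeomorphism $\Phi$ of Theorem~\ref{th:transfer}, and then to invoke the $\delta = 0$ case of Theorem~\ref{th:EJM-Rd-1} to contradict Cantor's theorem. Since the four centers are coplanar, after an isometry I may assume they all lie on the hyperplane $\R^2 \times \set{0}$ of $\R^3$; and because intersecting each spray with $\mathbb{H}^3$ preserves both the spray property and the fact that the pieces cover (as observed just before Theorem~\ref{th:transfer}), it is enough to contradict the existence of four sprays covering $\mathbb{H}^3$ with centers on $\R^2 \times \set{0}$. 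First I would clear away degeneracies: if two of the centers coincide, then Glueing merges the corresponding sprays and exhibits $\R^3$ as a union of at most three sprays, which is impossible by Theorem~\ref{th:Rdnotunionofdsprays}; hence I may assume $\mathbf{c}_1, \dots, \mathbf{c}_4$ are distinct.

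Next I would split on whether the four (now distinct) points are collinear. If they all lie on a line $\ell \subseteq \R^2 \times \set{0}$, I would apply Proposition~\ref{prop:Rdnotunionofdsprays} with $n = 4$, $d = 3$, and $H = \R^2 \times \set{0}$: choosing $L \subseteq H$ to be a line orthogonal to $\ell$, the orthogonal projection $\pi \colon H \to L$ sends all of $\ell$ to a single point, so $\{ \pi(\mathbf{c}_1), \dots, \pi(\mathbf{c}_4) \}$ has size $1 \leq d - 1$, and the proposition forbids the covering outright.

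Otherwise some three of the points fail to be collinear. (If two distinct lines each carried three of the four points they would share two points and hence coincide, forcing all four to be collinear; so not being fully collinear does yield a non-collinear triple.) Relabelling, I take $\mathbf{c}_1, \mathbf{c}_2, \mathbf{c}_3$ in general position in $\R^3$ and treat $\mathbf{c}_4$ as the extra centre, distinct from the other three. Theorem~\ref{th:transfer} then gives a homeomorphism $\Phi \colon \mathbb{H}^3 \to E^3$ onto a nonempty open set $E^3 \subseteq \R^3$ under which $A_i \coloneqq \Phi[X_i \cap \mathbb{H}^3]$ meets every hyperplane orthogonal to $\mathbf{e}_i$ in a finite set for $i = 1, 2, 3$, while $A_4 \coloneqq \Phi[X_4 \cap \mathbb{H}^3]$ meets every hyperplane orthogonal to some $\mathbf{u} \in \mathcal{U}(\mathbf{c}_4) \setminus \set{\mathbf{0}}$ in a finite set. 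As the sprays cover $\mathbb{H}^3$ and $\Phi$ is a bijection, $A_1, \dots, A_4$ cover $E^3$. I then apply Theorem~\ref{th:EJM-Rd-1} with $d = 3$, $N = 2(d - 1) = 4$, $\delta = 0$, $D = E^3$ and the non-zero vectors $\mathbf{e}_1, \mathbf{e}_2, \mathbf{e}_3, \mathbf{u}$, obtaining $2^{\aleph_0} \leq \aleph_0$, a contradiction.

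The substantive step is the transfer, and the delicate point is that $\Phi$ carries $\mathbb{H}^3$ only onto the \emph{proper} open subset $E^3$, not onto all of $\R^3$; this is exactly why the proof must use the $D$-relative impossibility statement (Theorem~\ref{th:EJM-Rd-1}) in place of the cleaner Corollary~\ref{cor:EJM-Rd-2}, and it is the reason the auxiliary set $D$ was threaded through Section~\ref{sec:Hyperplane-sections}. The only genuinely separate case is the fully collinear one, where the transfer cannot even start because no three centres span a plane; there Proposition~\ref{prop:Rdnotunionofdsprays} together with the perpendicular-projection trick is precisely the tool that closes the gap.
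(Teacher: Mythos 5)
Your proof is correct and follows essentially the same route as the paper's: split into the collinear case (disposed of by an orthogonal projection argument) and the case where three of the centers are non-collinear, which is handled by the transfer homeomorphism \( \Phi \) of Theorem~\ref{th:transfer} followed by the \( \delta = 0 \) hyperplane impossibility result. If anything you are slightly more careful than the paper: you explicitly dispose of coincident centers via glueing (which is needed to guarantee \( \mathbf{c}_4 \) is distinct from the non-collinear triple before invoking Theorem~\ref{th:transfer}), and your citation of Theorem~\ref{th:EJM-Rd-1} with \( D = E^3 \) is the precise reference, since the transferred sets cover only the open set \( E^3 \) rather than all of \( \R^3 \) as the paper's appeal to Corollary~\ref{cor:EJM-Rd-2} would literally require.
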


\begin{proof}
Towards a contradiction, suppose \( \R^3 = X_1 \cup \dots \cup X_4 \).

If \( \mathbf{c}_1 , \dots , \mathbf{c}_4 \) belong to the same line \( \ell \), let \( P \) be the plane orthogonal to \( \ell \), and let \( \pi \colon \R^3 \to P \) be the orthogonal projection.
Then \( Y \coloneqq \bigcup_{1 \leq i \leq 4} \pi [ X_i ] \) is a spray of \( P \) centered in \( \pi ( \mathbf{c}_1 ) = \dots = \pi ( \mathbf{c}_4 ) \in P \), and since the \( X_i \) cover \( \R^3 \), then \( Y \) should cover \( P \cong \R^2 \), which is absurd.

Therefore we may assume that \( \mathbf{c}_1 , \dots , \mathbf{c}_4 \) are not collinear, so either \( \mathbf{c}_1 , \mathbf{c}_2 , \mathbf{c}_3 \) are not collinear or \( \mathbf{c}_1 , \mathbf{c}_2 , \mathbf{c}_4 \) are not collinear.
If \( \mathbf{c}_1 , \mathbf{c}_2 , \mathbf{c}_3 \) are not collinear, then by Theorem~\ref{th:transfer} there is an open \( E^3 \subseteq \R^3 \) and there are \( A_1 , \dots , A_4 \) covering \( E^3 \) such that \( A_i \cap H_i ( r ) \) are finite for \( i = 1 , 2 , 3 \), and \( A_4 \cap H_{\mathbf{u}} ( \mathbf{p} ) \) is finite, where \( \mathbf{u} = ( u_1 , u_2 , u_3 ) \neq \mathbf{0} \) is such that \( u_1 ( \mathbf{c}_1 - \mathbf{c}_4 ) + u_2 ( \mathbf{c}_2 - \mathbf{c}_4 ) + u_3 ( \mathbf{c}_3 - \mathbf{c}_4 ) = \mathbf{0} \) and \( x \) and \( \mathbf{p} \) range in \( \R \) and \( \R^3 \), respectively.
The result follows from Corollaries~\ref{cor:EJM-Rd-2}.
Finally if \( \mathbf{c}_0 , \mathbf{c}_1 , \mathbf{c}_3 \) are not collinear, then swapping \( \mathbf{c}_2 \) with \( \mathbf{c}_3 \) we fall back into the previous case.
\end{proof}

Suppose \( \mathbf{c}_1 , \dots , \mathbf{c}_d \) are distinct, well-placed points in \( \R^d \), belonging to some hyperplane \( H \), and recall the vector space \( \mathcal{U} \) of~\eqref{eq:U}.
 
\begin{proposition}\label{prop:frompointstovectors}
Suppose \( n \geq d \) and \( \mathbf{c}_1 , \dots , \mathbf{c}_n \) are well-placed points in \( \R^d \).
For \( k \leq n \) let \( \mathbf{u}_k \in \mathcal{U} (\mathbf{c}_k ) \setminus \set{ \mathbf{0} } \).
Then the vectors \( \mathbf{u}_1 , \dots , \mathbf{u}_n \) are in general position in \( \R^d \).
\end{proposition}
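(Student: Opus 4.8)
The plan is to identify each $\mathbf{u}_k$ explicitly as a scalar multiple of the vector of barycentric coordinates of $\mathbf{c}_k$ with respect to the distinguished points $\mathbf{c}_1, \dots, \mathbf{c}_d$, and then reduce linear independence of the $\mathbf{u}_k$ to affine independence of the $\mathbf{c}_k$, which is exactly what well-placedness provides. First I would put the points in convenient coordinates: since the $\mathbf{c}_k$ are well-placed they lie in a hyperplane $H$, which after an isometry I may take to be $\R^{d-1} \times \set{0}$, so that $\mathcal{U}$ is the space of~\eqref{eq:U} built from $\mathbf{p}_i = \mathbf{c}_i$. Being well-placed, the points are in general position in $H \cong \R^{d-1}$; hence $\mathbf{c}_1, \dots, \mathbf{c}_d$ are affinely independent and form an affine frame of $H$, so every $\mathbf{c}_k$ has a unique tuple of barycentric coordinates $\boldsymbol{\lambda}^{(k)} = ( \lambda^{(k)}_1, \dots, \lambda^{(k)}_d )$ with $\mathbf{c}_k = \sum_i \lambda^{(k)}_i \mathbf{c}_i$ and $\sum_i \lambda^{(k)}_i = 1$.

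The first real step is to show that $\mathbf{u}_k$ is proportional to $\boldsymbol{\lambda}^{(k)}$. Rewriting the defining relation of $\mathcal{U}(\mathbf{c}_k)$ as $\sum_i u_i \mathbf{c}_i = ( \sum_i u_i ) \mathbf{c}_k$ for $\mathbf{u} = ( u_1, \dots, u_d ) \in \mathcal{U}(\mathbf{c}_k) \setminus \set{\mathbf{0}}$, I would first observe that $\sum_i u_i \neq 0$: otherwise $\sum_i u_i \mathbf{c}_i = \mathbf{0}$ together with $\sum_i u_i = 0$ would be a nontrivial affine dependence among $\mathbf{c}_1, \dots, \mathbf{c}_d$, contradicting their affine independence. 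Dividing by $\sum_i u_i$ then exhibits $\mathbf{c}_k$ as the affine combination $\sum_i \frac{u_i}{\sum_j u_j} \mathbf{c}_i$, so by uniqueness of barycentric coordinates $u_i / ( \sum_j u_j ) = \lambda^{(k)}_i$; that is, $\mathbf{u}_k = ( \sum_j u_j ) \, \boldsymbol{\lambda}^{(k)}$. In particular $\mathcal{U}(\mathbf{c}_k)$ is one-dimensional, spanned by $\boldsymbol{\lambda}^{(k)}$, and the choice of $\mathbf{u}_k$ is irrelevant up to scaling.

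It then remains to show that for any indices $k_1 < \dots < k_d \leq n$ the vectors $\boldsymbol{\lambda}^{(k_1)}, \dots, \boldsymbol{\lambda}^{(k_d)}$ are linearly independent. Here I would use the homogenization trick: lift each point to $\hat{\mathbf{c}}_j \coloneqq ( \mathbf{c}_j, 1 ) \in \R^d$. Since $\mathbf{c}_1, \dots, \mathbf{c}_d$ are affinely independent, the vectors $\hat{\mathbf{c}}_1, \dots, \hat{\mathbf{c}}_d$ form a basis of $\R^d$, and the relations $\mathbf{c}_k = \sum_i \lambda^{(k)}_i \mathbf{c}_i$, $\sum_i \lambda^{(k)}_i = 1$, combine into $\hat{\mathbf{c}}_k = \sum_i \lambda^{(k)}_i \hat{\mathbf{c}}_i$, so $\boldsymbol{\lambda}^{(k)}$ is precisely the coordinate vector of $\hat{\mathbf{c}}_k$ in this basis. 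Coordinate vectors are linearly independent exactly when the vectors themselves are, so $\boldsymbol{\lambda}^{(k_1)}, \dots, \boldsymbol{\lambda}^{(k_d)}$ are independent iff $\hat{\mathbf{c}}_{k_1}, \dots, \hat{\mathbf{c}}_{k_d}$ are, which in turn holds iff $\mathbf{c}_{k_1}, \dots, \mathbf{c}_{k_d}$ are affinely independent in $H$ — and that is guaranteed by the $\mathbf{c}_k$ being in general position in $H$, since any $d = ( d - 1 ) + 1$ of them have affine span of dimension $d - 1$. As the $\mathbf{u}_{k_j}$ are nonzero multiples of the $\boldsymbol{\lambda}^{(k_j)}$, they are linearly independent as well, and since $n \geq d$ every subset of size $\leq d$ lies inside such a size-$d$ independent family, so $\mathbf{u}_1, \dots, \mathbf{u}_n$ are in general position in $\R^d$. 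The only point requiring genuine care — the main obstacle — is the second step: recognizing that each $\mathbf{u}_k$ is forced up to scale to be the barycentric-coordinate vector, and in particular that the normalization $\sum_i u_i$ cannot vanish; everything after that is a transparent translation between affine and linear independence via the lift.
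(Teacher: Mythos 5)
Your proposal is correct and follows essentially the same route as the paper's proof: show that \( \sum_i u_k^i \neq 0 \) using affine independence of \( \mathbf{c}_1 , \dots , \mathbf{c}_d \), normalize so that the entries of \( \mathbf{u}_k \) sum to \( 1 \), identify \( \mathbf{u}_k \) as the coordinate vector of \( \mathbf{c}_k \) in the frame \( ( \mathbf{c}_1 , \dots , \mathbf{c}_d ) \), and transfer general position. The only cosmetic difference is that the paper translates \( H \) off the origin so that \( ( \mathbf{c}_1 , \dots , \mathbf{c}_d ) \) becomes a linear basis of \( \R^d \), whereas you achieve the same homogenization by lifting to \( ( \mathbf{c}_j , 1 ) \).
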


\begin{proof}
Let \( H \) be the hyperplane passing through the points \( \mathbf{c}_i \)s.
As the definition of \( \mathcal{U} ( \mathbf{c}_k ) \) is not affected by a translation of \( H \), we may assume that \( \mathbf{0} \notin H \), so that \( ( \mathbf{c}_1 , \dots , \mathbf{c}_ d ) \) forms a basis of \( \R^d \).
Let \( \mathbf{u}_k = ( u_k^1 , \dots , u_k^d ) \).
We claim that \( \sum_{ i = 1 }^d u_k^i \neq 0 \), for any \( k \leq n \).
Otherwise 
\begin{multline*}
\mathbf{0} = \sum_{ i = 1 }^d u_k ^i ( \mathbf{c}_i - \mathbf{c}_k ) = \sum_{ i = 1 }^d u_k ^i \mathbf{c}_i = u_k^1 \mathbf{c}_1 + \sum_{ i = 2 }^d u_k ^i \mathbf{c}_i 
\\
= - ( \sum_{ i = 2 }^d u_k ^i ) \mathbf{c}_1 + \sum_{ i = 2 }^d u_k ^i \mathbf{c}_i = \sum_{ i = 2 }^d u_k ^i ( \mathbf{c}_i - \mathbf{c} _1 )
\end{multline*}
shows that \( \mathbf{c}_1 , \dots , \mathbf{c}_d \) are not in general position in \( H \), against our assumption.
Scaling of vectors does not affect their general position, so we may assume that \( \sum_{ i = 1 }^d u_k^i = 1 \) for all \( k \leq n \).
Then \( \sum_{ i = 1}^d u_k^i ( \mathbf{c}_i - \mathbf{c}_k ) = \mathbf{0} \) implies that
\[
\mathbf{c}_k = \sum_{ i = 1}^d u_k^i \mathbf{c}_i
\]
that is to say: \( ( u_k^1 , \dots , u_k^d ) \) are the components of the vector \( \mathbf{c}_k \) with respect to the basis \( ( \mathbf{c}_1 , \dots , \mathbf{c}_d ) \).
Therefore the general position of the \( \mathbf{u}_k \)s follows from the general position of the \( \mathbf{c}_k \)s in \( H \).
\end{proof}

The very same proof yields:

\begin{proposition}\label{prop:frompointstovectors2}
Let \( H \) be a hyperplane of \( \R^d \), and suppose that for all \( k \geq 1 \) the points \( \mathbf{c}_k \in H \) are in general position in \( H \).
For all \( k \geq 1 \) let \( \mathbf{u}_k \in \mathcal{U} (\mathbf{c}_k ) \setminus \set{ \mathbf{0} } \).
Then the vectors \( \mathbf{u}_1 , \mathbf{u}_2 , \dots \) are in general position in \( \R^d \).
\end{proposition}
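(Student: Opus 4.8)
The plan is to run the proof of Proposition~\ref{prop:frompointstovectors} essentially verbatim, the only genuinely new observation being that general position is a property of finite subsets, so having infinitely many points costs nothing. First I would fix once and for all the reference frame defining \( \mathcal{U} \). By hypothesis every finite subset of the \( \mathbf{c}_k \)s is in general position in \( H \), so in particular \( \mathbf{c}_1 , \dots , \mathbf{c}_d \) are affinely independent and their affine span is all of \( H \); I take these as the points \( \mathbf{p}_1 , \dots , \mathbf{p}_d \) in \eqref{eq:U}. Since \( \mathcal{U} ( \mathbf{c}_k ) \) depends only on the differences \( \mathbf{c}_i - \mathbf{c}_k \), it is unchanged under a common translation of all the points, so I may translate \( H \) so that \( \mathbf{0} \notin H \); then \( ( \mathbf{c}_1 , \dots , \mathbf{c}_d ) \) is a basis of \( \R^d \).

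Next, for each individual \( k \geq 1 \) I would reproduce the coordinate computation. Writing \( \mathbf{u}_k = ( u_k^1 , \dots , u_k^d ) \), if \( \sum_{ i = 1 }^d u_k^i = 0 \) then the defining relation of \( \mathcal{U} ( \mathbf{c}_k ) \) gives \( \mathbf{0} = \sum_{ i = 1 }^d u_k^i ( \mathbf{c}_i - \mathbf{c}_k ) = \sum_{ i = 1 }^d u_k^i \mathbf{c}_i = \sum_{ i = 2 }^d u_k^i ( \mathbf{c}_i - \mathbf{c}_1 ) \), a nontrivial affine dependence contradicting the general position of \( \mathbf{c}_1 , \dots , \mathbf{c}_d \) in \( H \). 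Hence \( \sum_{ i = 1 }^d u_k^i \neq 0 \), and since rescaling \( \mathbf{u}_k \) does not affect general position I may normalize \( \sum_{ i = 1 }^d u_k^i = 1 \), so that \( \mathbf{c}_k = \sum_{ i = 1 }^d u_k^i \mathbf{c}_i \). Thus \( ( u_k^1 , \dots , u_k^d ) \) are precisely the coordinates of \( \mathbf{c}_k \) in the basis \( ( \mathbf{c}_1 , \dots , \mathbf{c}_d ) \); equivalently \( \mathbf{u}_k = T^{-1} ( \mathbf{c}_k ) \), where \( T \colon \R^d \to \R^d \) is the isomorphism \( \mathbf{e}_i \mapsto \mathbf{c}_i \).

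Finally I would verify general position by reducing to finite subsets: it suffices to show that any \( m \leq d \) of the vectors \( \mathbf{u}_{ k_1 } , \dots , \mathbf{u}_{ k_m } \) are linearly independent. As \( T \) is a linear isomorphism, this holds if and only if \( \mathbf{c}_{ k_1 } , \dots , \mathbf{c}_{ k_m } \) are linearly independent as vectors of \( \R^d \). Because \( \mathbf{0} \notin H \), I can take the linear functional \( f \) cutting out \( H \), which satisfies \( f ( \mathbf{c}_k ) = 1 \) for every \( k \); applying \( f \) to any linear dependence \( \sum_j \lambda_j \mathbf{c}_{ k_j } = \mathbf{0} \) forces \( \sum_j \lambda_j = 0 \), turning it into an affine dependence \( \sum_{ j \geq 2 } \lambda_j ( \mathbf{c}_{ k_j } - \mathbf{c}_{ k_1 } ) = \mathbf{0} \). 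Since the points \( \mathbf{c}_{ k_j } \) are in general position in \( H \) and \( m \leq d \), they are affinely independent, so all \( \lambda_j = 0 \). This gives the required linear independence of the \( \mathbf{u}_{ k_j } \), and hence the general position of \( \mathbf{u}_1 , \mathbf{u}_2 , \dots \) in \( \R^d \). I do not expect any real obstacle here; the only point demanding care is the bookkeeping that general position is tested only on subsets of size at most \( d \), so that the passage from the finite setting of Proposition~\ref{prop:frompointstovectors} to the infinite family is automatic, while the reference basis \( ( \mathbf{c}_1 , \dots , \mathbf{c}_d ) \) defining \( \mathcal{U} \) must be fixed before the argument begins.
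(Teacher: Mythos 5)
Your proof is correct and follows essentially the same route as the paper, which simply declares that ``the very same proof'' as Proposition~\ref{prop:frompointstovectors} applies: fix the reference basis \( ( \mathbf{c}_1 , \dots , \mathbf{c}_d ) \), translate so \( \mathbf{0} \notin H \), normalize \( \sum_i u_k^i = 1 \) to read off \( \mathbf{u}_k \) as the coordinate vector of \( \mathbf{c}_k \), and observe that general position is tested only on subsets of size at most \( d \). Your explicit verification of the last step (using the functional \( f \) with \( f \equiv 1 \) on \( H \) to convert linear dependences into affine ones) is a welcome filling-in of a step the paper leaves implicit.
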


\begin{theorem}\label{th:spraysinRd}
Fix \( d \geq 2 \) and \( n \geq 1 \), and let \( N = ( n + 1 ) ( d - 1) + 1 \) and \( M = ( n + 2 ) ( d - 1 ) \).
The following are equivalent:
\begin{enumerate}[label={\upshape (\alph*)}]
\item\label{th:spraysinRd-a}
\( 2^{\aleph_0 } \leq \aleph_n \).
\item\label{th:spraysinRd-b}
For all distinct, well-placed points \( \mathbf{c}_1 , \dots , \mathbf{c}_N \in \R^d \) there are sprays \( X_1 , \dots , X_N \) covering \( \R^d \) with \( X_i \) centered in \( \mathbf{c}_i \).
\item\label{th:spraysinRd-c}
There are sprays \( X_1 , \dots , X_M \) covering \( \R^d \) with \( X_i \) centered in \( \mathbf{c}_i \) such that \( \mathbf{c}_1 , \dots , \mathbf{c}_N \) are distinct and well-placed.
\end{enumerate}
\end{theorem}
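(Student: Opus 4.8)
The plan is to run the cycle \ref{th:spraysinRd-a}$\IMPLIES$\ref{th:spraysinRd-b}$\IMPLIES$\ref{th:spraysinRd-c}$\IMPLIES$\ref{th:spraysinRd-a}. The first implication is nothing but Theorem~\ref{th:boundoncontinuum=>R^dcoveredwithkappasprays} read with $\delta = 0$: from distinct, well-placed centers $\mathbf{c}_1 , \dots , \mathbf{c}_N$ together with $2^{\aleph_0} \leq \aleph_n$ it produces sprays $X_1 , \dots , X_N$ covering $\R^d$ with $X_i$ centered in $\mathbf{c}_i$. For \ref{th:spraysinRd-b}$\IMPLIES$\ref{th:spraysinRd-c} I would fix any distinct, well-placed $\mathbf{c}_1 , \dots , \mathbf{c}_N$ (a hyperplane of $\R^d$ carries infinitely many points in general position), apply \ref{th:spraysinRd-b} to cover $\R^d$ with $X_1 , \dots , X_N$, and then append $X_{N+1} = \dots = X_M = \emptyset$ with arbitrary centers, the empty set being vacuously a spray. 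When $d = 2$ one has $M = N$, so this step carries no extra content.

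The substance lies in \ref{th:spraysinRd-c}$\IMPLIES$\ref{th:spraysinRd-a}, which I would obtain by transporting the covering to the hyperplane-section problem of Section~\ref{sec:Hyperplane-sections}. Since $N \geq d$, the first $d$ well-placed centers form a set of size $d$, hence are in general position in $\R^d$ by Remark~\ref{rmks:well-placed}, and span the common hyperplane $H$; after an isometry take $H = \R^{d-1} \times \set{0}$, so the sprays restrict to a covering of $\mathbb{H}^d$. Feeding these centers into the homeomorphism $\Phi$ of Theorem~\ref{th:transfer}, each spray with center $\mathbf{c}_k \in H$ becomes a set $A_k \coloneqq \Phi[X_k \cap \mathbb{H}^d]$ meeting every hyperplane orthogonal to a vector $\mathbf{u}_k$ in a finite set, where $\mathbf{u}_k = \mathbf{e}_k$ for $k \leq d$ and $\mathbf{u}_k \in \mathcal{U}(\mathbf{c}_k) \setminus \set{\mathbf{0}}$ for $d < k$; these $A_k$ cover the nonempty open set $E^d$. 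By Proposition~\ref{prop:frompointstovectors} the vectors $\mathbf{u}_1 , \dots , \mathbf{u}_N$ are in general position, in particular distinct, and after gluing any two sprays that share a center I may assume all the $\mathbf{u}_k$ pairwise distinct.

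With this translation in hand I would invoke Theorem~\ref{th:hyperplanes<=>bound} via its implication \ref{th:hyperplanes<=>bound-d}$\IMPLIES$\ref{th:hyperplanes<=>bound-a}, taking $\delta = 0$ and $D = E^d$: the number of pieces, which is at least $N = (n+1)(d-1)+1$ and at most $M = (n+2)(d-1)$, lies in the admissible window $(n+1)(d-1) < N' \leq (n+2)(d-1)$, so the conclusion is exactly $2^{\aleph_0} \leq \aleph_n$. In the edge dimension $d = 2$ the hypothesis of \ref{th:spraysinRd-c} reads ``$\R^2$ is covered by $n+2$ sprays with collinear centers'', and since Theorem~\ref{th:hyperplanes<=>bound} is confined to $d \geq 3$, I would here quote Schmerl's two-dimensional theorem~\cite{Schmerl:2010nr}, which is precisely \ref{th:spraysinRd-c}$\IMPLIES$\ref{th:spraysinRd-a} in that case.

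The genuine obstacle, and the step I expect to cost the most, is that for $d \geq 3$ the $M - N = d - 2$ surplus centers $\mathbf{c}_{N+1} , \dots , \mathbf{c}_M$ are entirely free and need not lie on $H$. The transfer $\Phi$ linearizes only the spheres about centers \emph{on} $H$: for a center $\mathbf{c}_k = (\mathbf{c}_k', t)$ with $t \neq 0$ the square distance $\| \mathbf{x} - \mathbf{c}_k \|^2$ carries a term $-2t\,x_d$ whose value is not recoverable linearly (only through $x_d^2$) from the coordinates of $\Phi$, so such a sphere maps to a properly quadratic locus rather than to a hyperplane, and the corresponding $A_k$ then falls outside the scope of Section~\ref{sec:Hyperplane-sections}. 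Hence the argument above applies as written only once all $M$ centers are coplanar with $\mathbf{c}_1 , \dots , \mathbf{c}_N$; the all-coplanar instance is the pattern already exhibited by Theorem~\ref{th:no4spraysinR3}. Reducing the general configuration of \ref{th:spraysinRd-c} to this coplanar one — while keeping the controlled centers well-placed and the total count inside the window of Theorem~\ref{th:hyperplanes<=>bound} — is the crux, and I would attempt it by a projection-and-gluing induction that drives the off-$H$ sprays onto $H$.
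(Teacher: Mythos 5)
Your cycle, your proofs of \ref{th:spraysinRd-a}\( \IMPLIES \)\ref{th:spraysinRd-b} and \ref{th:spraysinRd-b}\( \IMPLIES \)\ref{th:spraysinRd-c}, and your treatment of \ref{th:spraysinRd-c}\( \IMPLIES \)\ref{th:spraysinRd-a} --- Schmerl's theorem for \( d = 2 \); for \( d \geq 3 \) the transfer \( \Phi \) of Theorem~\ref{th:transfer}, Proposition~\ref{prop:frompointstovectors} to get vectors in general position, and Theorem~\ref{th:hyperplanes<=>bound} with \( \delta = 0 \) and \( D = E^d \) --- coincide exactly with the paper's argument. So the substance of your proposal is the intended proof.

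The one place you part company with the paper is the ``genuine obstacle'' at the end, and there the situation is the reverse of what you fear: the paper does \emph{not} carry out the reduction you call the crux. Its proof of \ref{th:spraysinRd-c}\( \IMPLIES \)\ref{th:spraysinRd-a} begins ``suppose that \( \mathbf{c}_1 , \dots , \mathbf{c}_M \) are well-placed'', i.e.\ it tacitly strengthens the hypothesis of \ref{th:spraysinRd-c} so that \emph{all} \( M \) centers lie in general position on the common hyperplane --- precisely the configuration in which your argument is already complete. (Compare Theorem~\ref{th:spraysinRd-2}\ref{th:spraysinRd-2-c} and Theorem~\ref{th:spraysinR3}\ref{th:spraysinR3-c}, where this stronger hypothesis is what is written; the weaker wording of \ref{th:spraysinRd-c} appears to be a slip in the statement rather than a case you are expected to handle.) Your diagnosis of why the surplus centers cannot be absorbed is nonetheless accurate: for \( \mathbf{c}_k = ( \mathbf{c}_k' , t ) \) with \( t \neq 0 \) the function \( \| \mathbf{x} - \mathbf{c}_k \|^2 \) contains the term \( - 2 t x_d \), which is not an affine function of the coordinates \( \| \mathbf{x} - \mathbf{c}_i \|^2 \) of \( \Phi \), so such a spray transfers to a set with small sections by quadrics rather than by hyperplanes, outside the reach of Section~\ref{sec:Hyperplane-sections}. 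I would only add a caution about your proposed fix: a projection-and-gluing induction lowers the dimension, and \( M = ( n + 2 ) ( d - 1 ) \) sprays in \( \R^{ d - 1 } \) no longer lie in the window \( ( ( n + 1 ) ( d - 2 ) ; ( n + 2 ) ( d - 2 ) ] \) needed to extract \( 2^{\aleph_0} \leq \aleph_n \), so that route loses the sharp bound. The clean resolution is to read \ref{th:spraysinRd-c} with all \( M \) centers well-placed, as the paper's proof and its corollaries in fact do.
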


\begin{proof}
\ref{th:spraysinRd-a}\( \IMPLIES \)\ref{th:spraysinRd-b} follows from Theorem~\ref{th:boundoncontinuum=>R^dcoveredwithkappasprays} with \( \delta = 0 \), and \ref{th:spraysinRd-b}\( \IMPLIES \)\ref{th:spraysinRd-c} is trivial---set \( X_{N + 1} = \dots = X_M = \emptyset \) and \( \mathbf{c}_{ N + 1 } , \dots , \mathbf{c}_ M \) any points belonging to the same hyperplane passing through \( \mathbf{c}_1 , \dots , \mathbf{c}_N \).
So it is enough to prove \ref{th:spraysinRd-c}\( \IMPLIES \)\ref{th:spraysinRd-a}.

If \( d = 2 \) then \( N = M = n + 2 \) and \ref{th:spraysinRd-c} says that \( \R^2 \) is the union of \( n + 2 \) sprays with aligned centers, so \( 2^{\aleph_0 } \leq \aleph_n \) follows from~\cite[Theorem 7]{Schmerl:2010nr}.
Therefore we may assume that \( d \geq 3 \).
Towards a contradiction, assume \( 2^{\aleph_0 } \geq \aleph_{ n + 1} \) and suppose that \( \mathbf{c}_1 , \dots , \mathbf{c}_M \in \R^d \) are well-placed, and that \( X_1 , \dots , X_M \) are sprays as above.
For ease of notation, let \( \mathbf{u}_i = \mathbf{e}_i \) for \( 1 \leq i \leq d \).
By Proposition~\ref{prop:frompointstovectors} the vectors \( \mathbf{u}_1 , \dots , \mathbf{u}_N \) are in general position in \( \R^d \), and the map \( \Phi \colon \mathbb{H}^d \to E^d \) transforms the \( X_i \) into a covering \( A_1 ,\dots , A_M \) of \( E^d \) such that every hyperplane orthogonal to \( \mathbf{u}_i \) intersects \( A_i \) in a finite set.
But this contradicts Theorem~\ref{th:hyperplanes<=>bound} with \( \delta = 0 \).
\end{proof}

In particular:

\begin{theorem}\label{th:spraysinR3}
The following are equivalent.
\begin{enumerate}[label={\upshape (\alph*)}]
\item\label{th:spraysinR3-a}
\( \CH \).
\item\label{th:spraysinR3-b}
For all well-placed \( \mathbf{c}_1 , \dots , \mathbf{c}_5 \in \R^3 \) there are sprays \( X_1 , \dots , X_5 \) covering \( \R^3 \) with \( X_i \) centered in \( \mathbf{c}_i \).
\item\label{th:spraysinR3-c}
There are sprays \( X_1 , \dots , X_6 \) covering \( \R^3 \) with \( X_i \) centered in \( \mathbf{c}_i \) such that \( \mathbf{c}_1 , \dots , \mathbf{c}_6 \) are well-placed.
\end{enumerate}
\end{theorem}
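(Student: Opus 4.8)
The plan is to observe that Theorem~\ref{th:spraysinR3} is simply the instance $d = 3$, $n = 1$ of Theorem~\ref{th:spraysinRd}, so no new argument is required. First I would check that the parameters match. With $n = 1$ the hypothesis $2^{\aleph_0} \leq \aleph_n$ becomes $2^{\aleph_0} \leq \aleph_1$, and since $\aleph_1 \leq 2^{\aleph_0}$ holds unconditionally by Cantor's theorem, this is exactly $\CH$. With $d = 3$, $n = 1$ one computes $N = (n+1)(d-1)+1 = 5$ and $M = (n+2)(d-1) = 6$. Substituting $N = 5$ and $M = 6$, the clauses \ref{th:spraysinRd-a}, \ref{th:spraysinRd-b}, \ref{th:spraysinRd-c} of Theorem~\ref{th:spraysinRd} read verbatim as \ref{th:spraysinR3-a}, \ref{th:spraysinR3-b}, \ref{th:spraysinR3-c}; recall also that for $d = 3$ ``well-placed'' means coplanar with no three centers collinear.

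For orientation I would recall how the three implications run in this case. The direction \ref{th:spraysinR3-a}$\IMPLIES$\ref{th:spraysinR3-b} is the forward construction: under $\CH$, Theorem~\ref{th:boundoncontinuum=>R^dcoveredwithkappasprays} (with $\delta = 0$) yields five sprays covering $\R^3$ around any five well-placed centers. The step \ref{th:spraysinR3-b}$\IMPLIES$\ref{th:spraysinR3-c} is trivial: take $X_6 = \emptyset$ and let $\mathbf{c}_6$ be any point on the common plane of $\mathbf{c}_1, \dots, \mathbf{c}_5$.

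The substance is the reversal \ref{th:spraysinR3-c}$\IMPLIES$\ref{th:spraysinR3-a}, and the key step---already carried out inside Theorem~\ref{th:spraysinRd}---is the linearization of the spray covering. Concretely, one restricts the six covering sprays to $\mathbb{H}^3$ and applies the homeomorphism $\Phi$ of Theorem~\ref{th:transfer}, obtaining sets $A_1, \dots, A_6$ covering the non-empty open set $E^3$ such that each $A_i$ meets every hyperplane orthogonal to its associated vector $\mathbf{u}_i$ in a finite set; Proposition~\ref{prop:frompointstovectors} ensures that the five vectors coming from the well-placed centers $\mathbf{c}_1, \dots, \mathbf{c}_5$ are in general position in $\R^3$. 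The one point deserving care is verifying the numerical hypothesis $(n+1)(d-1) < M \leq (n+2)(d-1)$ of Theorem~\ref{th:hyperplanes<=>bound}, which here is $4 < 6 \leq 6$ and holds. Invoking that theorem (with $\delta = 0$) in the direction \ref{th:hyperplanes<=>bound-d}$\IMPLIES$\ref{th:hyperplanes<=>bound-a} forces $2^{\aleph_0} \leq \aleph_1$, i.e.\ $\CH$, completing the proof. I expect no genuine obstacle beyond this bookkeeping, since all the difficulty resides in the general Theorems~\ref{th:hyperplanes=>bound} and~\ref{th:spraysinRd}.
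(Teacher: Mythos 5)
Your proposal is correct and matches the paper exactly: the paper derives Theorem~\ref{th:spraysinR3} as the special case \( d = 3 \), \( n = 1 \) of Theorem~\ref{th:spraysinRd} (introducing it with ``In particular''), and your parameter check \( N = 5 \), \( M = 6 \), together with the identification of \( 2^{\aleph_0} \leq \aleph_1 \) with \( \CH \), is all that is needed. The additional recap of how the three implications run inside Theorem~\ref{th:spraysinRd} is accurate but not required.
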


More generally:

\begin{theorem}\label{th:spraysinRd-2}
Fix \( d \geq 2 \) and \( n \geq 1 \), and let \( M = ( n + 2 ) ( d - 1 ) \), \( N = ( n + 1 ) ( d - 1) + 1 \), \( \bar{M} = (n + 1 ) ( d - 1 ) = N - 1 \), and \( \bar{N} = n ( d - 1) + 1 \).
The following are equivalent:
\begin{enumerate}[label={\upshape (\alph*)}]
\item\label{th:spraysinRd-2-a}
\( 2^{\aleph_0 } \leq \aleph_n \).
\item\label{th:spraysinRd-2-b}
For all well-placed points \( \mathbf{c}_1 , \dots , \mathbf{c}_N \in \R^d \) there are sprays \( X_1, \dots , X_N \) covering \( \R^d \) with \( X_i \) centered in \( \mathbf{c}_i \).
\item\label{th:spraysinRd-2-c}
There are sprays \( X_1 , \dots , X_M \) covering \( \R^d \) with \( X_i \) centered in \( \mathbf{c}_i \) such that \( \mathbf{c}_1 , \dots , \mathbf{c}_M \) are well-placed.
\item\label{th:spraysinRd-2-d}
For all well-placed points \( \mathbf{c}_1 , \dots , \mathbf{c}_{\bar{N}} \in \R^d \) there are \( \sigma \)-sprays \( X_1, \dots , X_N \) covering \( \R^d \) with \( X_i \) centered in \( \mathbf{c}_i \).
\item\label{th:spraysinRd-2-e}
There are \( \sigma \)-sprays \( X_1 , \dots , X_{\bar{M}} \) covering \( \R^d \) with \( X_i \) centered in \( \mathbf{c}_i \) such that \( \mathbf{c}_1 , \dots , \mathbf{c}_{\bar{M}} \) are well-placed.
\end{enumerate}
\end{theorem}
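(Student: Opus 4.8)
The plan is to notice that the three finite-spray conditions \ref{th:spraysinRd-2-a}, \ref{th:spraysinRd-2-b}, \ref{th:spraysinRd-2-c} are (up to the harmless strengthening of ``\( \mathbf{c}_1 , \dots , \mathbf{c}_N \) well-placed'' to ``\( \mathbf{c}_1 , \dots , \mathbf{c}_M \) well-placed'') exactly the equivalences already established in Theorem~\ref{th:spraysinRd}. Hence it is enough to splice in the two \( \sigma \)-spray conditions by proving the cycle \ref{th:spraysinRd-2-a}\( \IMPLIES \)\ref{th:spraysinRd-2-d}\( \IMPLIES \)\ref{th:spraysinRd-2-e}\( \IMPLIES \)\ref{th:spraysinRd-2-a}. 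The step \ref{th:spraysinRd-2-d}\( \IMPLIES \)\ref{th:spraysinRd-2-e} is immediate: since \( \bar{N} = n ( d - 1 ) + 1 \leq ( n + 1 ) ( d - 1 ) = \bar{M} \) for \( d \geq 2 \), one pads a covering by \( \bar{N} \) many \( \sigma \)-sprays with \( \bar{M} - \bar{N} = d - 2 \) empty \( \sigma \)-sprays, placing their centers on the common hyperplane through \( \mathbf{c}_1 , \dots , \mathbf{c}_{ \bar{N} } \).

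For \ref{th:spraysinRd-2-a}\( \IMPLIES \)\ref{th:spraysinRd-2-d} I would invoke Theorem~\ref{th:boundoncontinuum=>R^dcoveredwithkappasprays} with \( \delta = 1 \), reading \( \aleph_n \) as \( \aleph_{ 1 + ( n - 1 ) } \): from \( 2^{\aleph_0} \leq \aleph_{ 1 + ( n - 1 ) } \) that theorem produces \( ( ( n - 1 ) + 1 ) ( d - 1 ) + 1 = n ( d - 1 ) + 1 = \bar{N} \) many \( \aleph_1 \)-sprays, i.e.\ \( \sigma \)-sprays, centered at any prescribed well-placed \( \mathbf{c}_1 , \dots , \mathbf{c}_{ \bar{N} } \) and covering \( \R^d \). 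The case \( n = 1 \) is permitted because that theorem allows its parameter to be \( 0 \).

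The heart is \ref{th:spraysinRd-2-e}\( \IMPLIES \)\ref{th:spraysinRd-2-a}, which I would run as the reverse implication of Theorem~\ref{th:spraysinRd} but with \( \delta = 1 \). Assume \( \sigma \)-sprays \( X_1 , \dots , X_{ \bar{M} } \) with well-placed centers cover \( \R^d \). If \( d \geq 3 \), after an isometry placing the centers in \( \R^{ d - 1 } \times \set{0} \) and restricting to \( \mathbb{H}^d \), Theorem~\ref{th:transfer} converts the \( X_i \) into sets \( A_1 , \dots , A_{ \bar{M} } \) covering the open set \( E^d \) so that each \( A_i \) meets every hyperplane orthogonal to \( \mathbf{u}_i \) in a countable set, the \( \mathbf{u}_i \) being in general position by Proposition~\ref{prop:frompointstovectors} (here \( \bar{M} = ( n + 1 ) ( d - 1 ) \geq d \)). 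Since \( n ( d - 1 ) < \bar{M} \leq ( n + 1 ) ( d - 1 ) \), for \( n \geq 2 \) Theorem~\ref{th:hyperplanes<=>bound}, applied with \( \delta = 1 \), with its parameter equal to \( n - 1 \), and with \( D = E^d \), gives \( 2^{\aleph_0} \leq \aleph_{ 1 + ( n - 1 ) } = \aleph_n \); for \( n = 1 \), where \( \bar{M} = 2 ( d - 1 ) \), the conclusion \( \CH \) instead comes from Theorem~\ref{th:EJM-Rd-2}\ref{th:EJM-Rd-2-c}\( \IMPLIES \)\ref{th:EJM-Rd-2-a}. When \( d = 2 \) we have \( \bar{M} = \bar{N} = n + 1 \) with collinear centers, and the transfer reduces the problem to covering an open subset of the plane by \( n + 1 \) sets having countable intersection with \( n + 1 \) distinct line-directions; here \( 2^{\aleph_0} \leq \aleph_n \) follows from the corresponding two-dimensional theorem of Schmerl~\cite{Schmerl:2010nr}.

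The main obstacle is the parameter bookkeeping in the countable (\( \delta = 1 \)) regime: one must verify that the \( \sigma \)-spray counts \( \bar{N} \) and \( \bar{M} \) land on the correct side of the threshold \( n ( d - 1 ) \) hard-wired into Theorem~\ref{th:hyperplanes<=>bound}, and one must separately dispatch the two boundary cases that the higher-dimensional machinery does not reach, namely \( n = 1 \) (through Theorem~\ref{th:EJM-Rd-2}) and \( d = 2 \) (through the planar result). Once these alignments are checked, everything else is a faithful re-run of the finite-spray argument of Theorem~\ref{th:spraysinRd} with \( \aleph_0 \)-intersections replaced throughout by \( \aleph_1 \)-intersections.
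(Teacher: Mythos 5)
Your proposal is correct and follows exactly the route the paper intends: the paper states this theorem without a separate proof, as the evident adaptation of Theorem~\ref{th:spraysinRd} (reused verbatim for \ref{th:spraysinRd-2-a}--\ref{th:spraysinRd-2-c}) in which the transfer of Theorem~\ref{th:transfer} and the hyperplane-section results are rerun with \( \delta = 1 \) in place of \( \delta = 0 \), and your parameter bookkeeping (\( \bar N = n(d-1)+1 \), \( \bar M = (n+1)(d-1) \), the \( n=1 \) case via Theorem~\ref{th:EJM-Rd-2}, the \( d=2 \) case via the planar result) is the intended one. Note only that in item~\ref{th:spraysinRd-2-d} the subscript \( N \) on the sprays is a typo for \( \bar N \), which you have silently and correctly repaired.
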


Proposition~\ref{prop:frompointstovectors2} is the bridge connecting Problem~\ref{prob:hyperplanes} with
\begin{problem}\label{prob:sprays}
Given \( \mathbf{c}_1 , \dots , \mathbf{c}_n \) distinct points of \( H \), a hyperplane of \( \R^d \), what conditions on the cardinality of \( \R \) are equivalent to the existence of \( X_1 , \dots , X_n \) covering \( \R^d \), each \( X_i \) a spray (or \( \sigma \)-spray) centered in \( \mathbf{c}_i \)?
\end{problem}
Theorem~\ref{th:spraysinRd-2} yields a complete solution to Problem~\ref{prob:sprays} when the \( \mathbf{c}_i \)s are well-placed, i.e. in general position in \( H \).
Using the results mentioned in Remark~\ref{rmk:furtherresults} it is possible to distill a few more results on sprays in \( \R^3 \).
Let us mention just two of them.
The first is that given four coplanar points \( \mathbf{c}_1 , \mathbf{c}_2 , \mathbf{c}_3 , \mathbf{c}_4 \) in \( \R^3 \), there exist no \( X_1 , X_2 , X_3 , X_4 \) covering \( \R^3 \) such that \( X_1 , X_2 \) are sprays centered in \( \mathbf{c}_1 , \mathbf{c}_2 \), and \( X_3 , X_4 \) are \( \sigma \)-sprays centered in \( \mathbf{c}_3 , \mathbf{c}_4 \).
The second result is that the six points in \( \R^2 \times \setLR{0} \) of Figure~\ref{fig:2} are not in general position in the plane, and 
\begin{itemize}
\item
no five sprays centered in these points can cover \( \R^3 \), but 
\item
\( \CH \) is equivalent to the existence of six sprays, centered in these points, covering \( \R^3 \).
\end{itemize}

\begin{figure}
\[
\begin{tikzpicture}
\draw[very thin, dashed] (0,0)--(2 , 0)-- (1, 1.73205)--cycle;
\filldraw (0,0) circle [radius=1.5pt] {};
\filldraw (1,0) circle [radius=1.5pt] {};
\filldraw (2,0) circle [radius=1.5pt] {};
\filldraw (1,1.73205) circle [radius=1.5pt] {};
\filldraw (0.5,0.866) circle [radius=1.5pt] {};
\filldraw (1.5,0.866) circle [radius=1.5pt] {};
\end{tikzpicture}
\]
\caption{}\label{fig:2}
\end{figure}

\subsection{Covering the space with infinitely many sprays}
We have seen how covering the space with sprays with well-placed centers is equivalent to giving an upper bound for the size of the continuum, the larger the number of sprays, the weaker the bound.
Next we show that, irrespective of the size of the continuum, the space can be covered with \( \aleph_0 \)-many sprays with well-placed centers.

A \markdef{drizzle} in \( \R^d \) with center \( \mathbf{c} \) is a \( X \subseteq \R^d \) such that any sphere centered in \( \mathbf{c} \) intersects \( X \) in at most one point---thus a drizzle is a very sparse spray.

\begin{theorem}\label{th:unionofcountablymanysprays}
If \( \setof{ \mathbf{c}_n }{ n \geq 1 } \) are well-placed points in \( \R^d \), with \( d \geq 2 \), then there are \( X_n \) covering \( \R^d \) such that \( X_n \) is a drizzle centered in \( \mathbf{c}_n \).
\end{theorem}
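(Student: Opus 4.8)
The plan is to produce a single colouring $c \colon \R^d \to \N$ (colours indexed as the centres, $\N = \setLR{1,2,\dots}$) such that, setting $X_n = c^{-1}(n)$, each class is a drizzle centred in $\mathbf{c}_n$. Writing $\rho_n(\mathbf{x}) = \| \mathbf{x} - \mathbf{c}_n \|$, a set is a drizzle centred in $\mathbf{c}_n$ precisely when $\rho_n$ is injective on it, i.e. it meets each sphere centred in $\mathbf{c}_n$ in at most one point; so I must colour $\R^d$ so that any two distinct points sharing a colour $n$ have distinct distances from $\mathbf{c}_n$. The only geometric input will be that, since the $\mathbf{c}_n$ are well-placed, any $d$ of them span a subspace of dimension $d-1$, whence by Corollary~\ref{cor:intersectionspheres-2} the intersection of $d$ spheres centred in $d$ distinct centres is \emph{finite}. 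Note that, because there are infinitely many centres available, it costs nothing to have some $X_n$ empty, so I only need the colour classes to cover $\R^d$.

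The engine of the construction is a finitary closure operator $\mathrm{cl}$ on $\R^d$: let $\mathrm{cl}(A)$ be the smallest set containing $A$ and all the $\mathbf{c}_n$ and closed under the operation that, given any $d$ of its points $\mathbf{p}_1,\dots,\mathbf{p}_d$ and any $d$ distinct indices $n_1,\dots,n_d$, adjoins the (finite) set $\bigcap_{i=1}^d \mathbb{S}(\mathbf{c}_{n_i}, \| \mathbf{p}_i - \mathbf{c}_{n_i} \|)$. As this operation has finite input and finite output, $\mathrm{cl}$ is algebraic and $\card{\mathrm{cl}(A)} = \card{A} + \aleph_0$. The key lemma is then: if $W = \mathrm{cl}(W)$ and $\mathbf{x} \in \R^d \setminus W$, then there are at most $d-1$ indices $n$ for which some point of $W$ lies on the sphere $\mathbb{S}(\mathbf{c}_n, \rho_n(\mathbf{x}))$. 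Indeed, given $d$ such indices $n_1,\dots,n_d$ with witnesses $\mathbf{y}_i \in W$ satisfying $\rho_{n_i}(\mathbf{y}_i) = \rho_{n_i}(\mathbf{x})$, we would have $\mathbf{x} \in \bigcap_{i=1}^d \mathbb{S}(\mathbf{c}_{n_i}, \rho_{n_i}(\mathbf{y}_i))$, a finite set produced by $\mathrm{cl}$ from data in $W$, hence $\mathbf{x} \in \mathrm{cl}(W) = W$, a contradiction. In words: a point outside a closed set can conflict with that set in only $d-1$ colours.

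I would then construct $c$ by induction on $\card{W}$ for $\mathrm{cl}$-closed $W \subseteq \R^d$, strengthening the statement so that each point carries a prescribed finite set of forbidden colours. In the base case $\card{W} \leq \aleph_0$, enumerate $W$ and colour greedily: when colouring a point, only finitely many earlier points have been coloured, each blocking at most its own colour, so together with the finite forbidden set only finitely many colours are excluded, and a fresh colour from $\N$ remains; the invariant ``each colour class is a partial transversal of its sphere family'' is preserved. In the inductive step $\card{W} = \mu > \aleph_0$, write $W = \bigcup_{\xi < \operatorname{cf}\mu} W_\xi$ as a continuous increasing union of $\mathrm{cl}$-closed sets of size $< \mu$, and colour slice by slice: when passing to $W_{\xi+1}$, every new point lies outside the closed set $W_\xi$, so by the key lemma it acquires at most $d-1$ new forbidden colours from the already-coloured past, keeping its forbidden set finite; continuity guarantees coherence at limit stages. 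Applying this to $W = \R^d$ (which is $\mathrm{cl}$-closed) with all forbidden sets empty, and putting $X_n = c^{-1}(n)$, yields the desired drizzles.

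The main obstacle is precisely the uncountable case: a naive transfinite greedy colouring fails, because at a limit stage the earlier points (even countably many of them) could already block \emph{all} of the countably many colours. What rescues the argument is the closure/finite-mesh bound, which forces each point to conflict with the entire \emph{closed} past in at most $d-1$ colours, so that the per-point forbidden sets stay finite and a fresh colour is always available. The delicate bookkeeping is the treatment of the non-closed slices $W_{\xi+1}\setminus W_\xi$: this is handled by always measuring blocking against the full closed set $W_\xi$ (which simultaneously rules out cross-slice conflicts) and by carrying the finite forbidden sets through the induction. I expect this bound to also absorb the degenerate ``reflection'' pairs that arise in the well-placed case—two points equidistant from every centre—since each such point lies in the closure of the other, and hence both are dealt with inside a single countable closed piece by the base case.
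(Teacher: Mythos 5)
Your proof is correct, but it takes a genuinely different route from the paper. The paper's proof stays within the transfer machinery of Section~\ref{sec:Transforming-sprays-into-linear-objects}: it passes from the centers \( \mathbf{c}_n \) to vectors \( \mathbf{u}_n \in \mathcal{U}(\mathbf{c}_n) \) in general position (Proposition~\ref{prop:frompointstovectors2}), invokes a theorem of Davies~\cite{Davies:1974vn} to get countably many sets meeting every hyperplane orthogonal to \( \mathbf{u}_k \) in at most one point --- and needs it in the strong form that the even-indexed and the odd-indexed sets each cover \( \R^d \) --- and then pulls these back through \( \Phi \) (extended to \( \Cl(\mathbb{H}^d) \)) and a reflection \( \tau \) to cover both half-spaces. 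You instead prove the geometric statement directly by a transfinite closure argument: the finitary closure operator built from the \( d \)-fold sphere intersections (finite by Corollary~\ref{cor:intersectionspheres-2}), the key lemma that a point outside a closed set conflicts with it in at most \( d-1 \) colours, and the induction on cardinality with finite forbidden sets. This is essentially the standard proof technique \emph{behind} Davies' theorem and Theorem~\ref{th:EJM2}, transplanted from hyperplanes to spheres, so your argument is self-contained where the paper's is not: it needs neither the map \( \Phi \), nor Proposition~\ref{prop:frompointstovectors2}, nor the two-disjoint-subfamilies-plus-reflection device, and it correctly absorbs the only genuine geometric danger (the reflection pairs across the common hyperplane \( H \), which are equidistant from \emph{every} centre) because each such point lies in the closure of its mirror image. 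Two small presentational points you should tighten in a write-up: the inductive statement is best phrased as an \emph{extension} statement (a valid colouring of a closed \( W' \subseteq W \) extends to \( W \) respecting finite forbidden sets on \( W \setminus W' \)), since the slices \( W_{\xi+1}\setminus W_\xi \) to which you apply the hypothesis are not themselves closed --- the base case never uses closedness of the set being coloured, only of the already-coloured past, which is exactly what your key lemma requires; and you should note explicitly that Corollary~\ref{cor:intersectionspheres-2} applies because, the centres being in general position in the hyperplane \( H \), any \( d \) of them affinely span \( H \), which has dimension \( d-1 \).
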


\begin{proof}
Without loss of generality we may assume that the \( \mathbf{c}_n \)s belong to \( \R^{ d - 1 } \times \setLR{0} \).
Pick \( \mathbf{u}_n \in \mathcal{U} ( \mathbf{c}_n ) \setminus \set{ \mathbf{0} } \), where \( \mathcal{U} ( \mathbf{q} ) = \mathcal{U}_{ \mathbf{c}_1 , \dots , \mathbf{c}_d } ( \mathbf{q} ) \).
By Proposition~\ref{prop:frompointstovectors2} the vectors \( \mathbf{u}_n \) (\( n \geq 1 \)) are in general position in \( \R^d \).
By~\cite{Davies:1974vn} there are sets \( A_k \) (\( k \geq 1 \)) such that for all \( \mathbf{p} \in \R^d \), \( H_{ \mathbf{u}_k } ( \mathbf{p} ) \cap A_k \) has at most one point, and such that \( \R^d = \bigcup_{n \geq 1 } A_{ 2 n } = \bigcup_{n \geq 0 } A_{ 2 n + 1 } \).
The map \( \Phi \colon \mathbb{H}^d \to E^d \) of~\eqref{eq:Phi} can be extended to the closures of \( \mathbb{H}^d \) and \( E^d \), so we can assume that \( X_{ 2 n } \coloneqq \Phi^{-1} [ A_{ 2 n } ] \) is a subset of \( \Cl ( \mathbb{H}^d ) = \R^{ d - 1} \times [ 0 ; + \infty ) \), and that \( \bigcup_{ n \geq 1 } X_{ 2 n } = \Cl ( \mathbb{H}^d ) \).
Letting \( \tau \colon \R^d \to \R^d \) be the reflection with respect to the hyperplane \( \R^{ d - 1 } \times \set{0} \), let \( X_{ 2 n + 1 } = \tau [ \Phi^{-1} [ A_{ 2 n + 1 } ] ] \).
Then \( \bigcup_{n \geq 0 } X_{ 2 n + 1 } = \tau[ \mathbb{H}^d ] = \R^{ d - 1 } \times ( - \infty ; 0 ) \).
Therefore \( \R^d = \bigcup_{n } X_n \), and by construction each \( X_n \) is a drizzle centered in \( \mathbf{c}_n \). 
\end{proof}

\printbibliography

\end{document}